\newcommand{\mc}{\mathcal}
\newcommand{\mbb}{\mathbb}
\newcommand{\mrm}{\mathrm}
\newcommand{\mf}{\mathfrak}
\newcommand{\mbf}{\mathbf}
\newcommand{\0}{\emptyset}
\newcommand{\al}{\alpha}
\newcommand{\be}{\beta}
\newcommand{\ga}{\gamma}
\newcommand{\ka}{\kappa}
\newcommand{\lam}{\lambda}
\newcommand{\eps}{\varepsilon}
\newcommand{\de}{\delta}
\newcommand{\om}{\omega}
\newcommand{\wt}{\widetilde}
\newcommand{\dom}{\mathrm{dom}}
\newcommand{\ran}{\mathrm{ran}}
\newcommand{\bc}{\begin{center}}
\newcommand{\ec}{\end{center}}
\newcommand{\clrest}{\!\upharpoonright\!}
\newtheorem{claim}{Claim}
\newtheorem{thm}{Theorem}[section]
\newtheorem{prop}[thm]{Proposition}
\newtheorem{cor}[thm]{Corollary}
\newtheorem{fact}[thm]{Fact}
\theoremstyle{definition}
\newtheorem{prob}[thm]{Problem}
\newtheorem{exa}[thm]{Example}
\newtheorem{rem}[thm]{Remark}
\title{Analytic P-ideals and Banach spaces}
\author{Piotr Borodulin--Nadzieja}
\address[Piotr Borodulin-Nadzieja]{Instytut Matematyczny, Uniwersytet Wroc\l awski, \ \ \  pl. Grunwaldzki 2/4, 50-384 Wroc\l aw, Poland}
\email{pborod@math.uni.wroc.pl}
\author{Barnab\'as Farkas}
\address[Barnab\'as Farkas]{TU Wien, Vienna, Austria}
\email{barnabasfarkas@gmail.com}
\thanks{The first author was supported by
 National Science Center project no. 2018/29/B/ST1/00223. The second author was supported by the Austrian Science Fund (FWF) project no. P29907.}
\subjclass[2010]
{03E05, 03E75, 46B15, 46B45}
\keywords{analytic P-ideals, non-pathological submeasures, families of finite sets, Schreier spaces, extended norms, unconditional Schauder bases, boundedly complete bases, shrinking bases}
\begin{document}

\maketitle

\begin{abstract} 
We study the interplay between Banach space theory and theory of analytic P-ideals.
Applying the observation that, up to isomorphism, all Banach spaces
with unconditional bases can be constructed in a way very similar to the
construction of analytic P-ideals from submeasures, we point out numerous
symmetries between the two theories. Also, we investigate a special case,
 the interactions between combinatorics of families of finite sets, topological properties of the ``Schreier type'' Banach spaces associated to these families, and the complexity of ideals generated by the canonical bases in these
spaces. Among other results, we present some new examples of Banach spaces and analytic P-ideals, a new characterization of precompact families and its applications to enhance Pt\'ak's Lemma and Mazur's Lemma.
\end{abstract}

\section{Introduction}
Certain aspects of the interplay between Banach space theory and the theory of analytic P-ideals have already been studied in the past two decades. For example, Veli\v{c}kovi\'{c} and Louveau (\cite{LV}) considered several ideals inspired by classical Banach spaces; Veli\v{c}kovi\'{c}
(\cite{veli-tsirelson}) and Farah (\cite{farah-tsirelson}) used the Tsirelson space, a classical construction in Banach space theory, to construct certain peculiar analytic P-ideals disproving a conjecture proposed by Kechris and Mazur; on the other hand, Drewnowski and Labuda (\cite{Drewnowski10} and \cite{Drewnowski17}) studied analytic P-ideals from the point of view of specialists in Banach space theory, obtaining certain general results.

In \cite{Our-paper} we proposed a more systematic study of the connections between these ideals and Banach spaces. The main tool for our studies was representability of ideals in Banach spaces, a natural generalization of the concept of summable
ideals. We say that an ideal on the set $\om$ of all natural numbers is \emph{representable} in a Banach space $X$ if it is of the form \[ \mc{I}^X_{(x_n)}=\bigg\{A\subseteq \omega\colon \sum_{n\in A} x_n\;\text{is unconditionally
convergent in}\;X\bigg\}\]
for some sequence $(x_n)$ from $X$. In Section \ref{reprsetting} we give a brief overview of this notion.

\medskip
In this article we will present a way to connect analytic P-ideals and Banach spaces in a tighter way. It is well-known that each analytic P-ideal is the exhaustive ideal $\mrm{Exh}(\varphi)=\{A\subseteq\om\colon \varphi(A\setminus n)\to 0\}$ of some lower-semicontinuous (lsc) submeasure $\varphi$ on $\om$, and for $F_\sigma$ P-ideals we
can find such submeasures for which $\mrm{Exh}(\varphi)$ coincides with the ``finite'' ideal $\mrm{Fin}(\varphi)=\{A\subseteq\om\colon \varphi(A)<\infty\}$ of $\varphi$ (see Section \ref{analPsec} for a detailed introduction to analytic P-ideals).
Considering an extended norm $\Phi$ on $\mathbb{R}^\om$ (that is, a ``norm'' which may attain infinite values) it makes sense to define the  \emph{exhaustive space} $\mrm{EXH}(\Phi)=\{x\in\mbb{R}^\om\colon \Phi(P_{\om\setminus n}(x))\to 0\}$ where
$P_A\colon \mbb{R}^\om\to\mbb{R}^\om$ is the natural projection associated to $A\subseteq\om$, and the \emph{finite space} $\mrm{FIN}(\Phi)=\{x\in\mbb{R}^\om\colon \Phi(x)<\infty\}$ of $\Phi$ in the same manner. Under some natural assumptions on $\Phi$, both
$\mrm{EXH}(\Phi)$ and $\mrm{FIN}(\Phi)$ equipped with the restrictions of $\Phi$ are Banach spaces. Moreover, it turns out that each Banach space with unconditional basis is (up to isomorphism) an exhaustive space for some extended norm.

In Section \ref{main}, first of all we show that if an ideal is representable in a Banach space, then it is representable in $\mrm{EXH}(\Phi)$ for some $\Phi$ via its canonical (unconditional) basis $(e_n)$. Pointing out the interactions between analytic P-ideals and these spaces, we present numerous statements equivalent to (i) $\mrm{EXH}(\Phi)=\mrm{FIN}(\Phi)$ including the following: (ii) $\mrm{EXH}(\Phi)$ is an $F_\sigma$ subset of $\mathbb{R}^\om$;
(iii) $\mrm{EXH}(\Phi)$ does not contain a copy of $c_0$;  (iv) $\mrm{FIN}(\Phi)$ is separable; (v) all ideals representable in $\mrm{EXH}(\Phi)$ are $F_\sigma$. 
Most of the results in this section
turned out to be just reformulations of known, often even classical facts in Banach space theory. However, to the best of our knowledge the language of exhaustive and finite spaces was not present in the literature. This language seems to be quite convenient and reveals certain symmetries between the theory of Banach space and the theory of analytic P-ideals.

There is a natural way of associating an lsc submeasure $\varphi$ to an extended norm $\Phi$: if $\tau\in \mbb{R}^\om$, then $\varphi(A) = \Phi(P_A(\tau))$ is an lsc submeasure. So, an extended norm induces the Banach spaces $\mrm{EXH}(\Phi)$ and $\mrm{FIN}(\Phi)$, also it generates (many) analytic P-ideals. It seems interesting to study how the properties of Banach spaces induced by $\Phi$ impact properties of the ideals induced by $\Phi$ and vice versa. This is partially done
in Section \ref{main}. Also, concrete examples of Banach spaces motivate definitions of extended norms and it is interesting to see what concrete analytic P-ideals they induce. And
vice versa, one can consider extended norms associated to classical analytic P-ideals and look at the Banach spaces which they generate.

Pretentiously, one can summarize the above results saying that Banach spaces (with unconditional bases) are \emph{linearized} versions of (non-pathological) analytic P-ideals and, vice versa, (non-pathological) analytic P-ideals are \emph{discretized}
incarnations of Banach spaces (with unconditional
bases). We will show that this analogy leads us to new characterizations of classical properties, as well as, provides us with new tools of constructing interesting examples of ideals and Banach spaces.

From Section 5 on we consider Banach spaces and analytic P-ideals induced by families of finite sets. If $\mc{F}\subseteq[\om]^{<\om}=\{A\subseteq\om\colon |A|<\om\}$ covers $\om$ then $\Phi_\mc{F}(x)=\sup\{\sum_{i\in F}|x_i|\colon F\in\mc{F}\}$ is
an extended norm. The main result, Theorem \ref{compactly-supported}, of the paper points out a deeper interaction between combinatorial properties of $\mc{F}$, topological properties of $\mrm{EXH}(\Phi_\mc{F})$, and complexity of {\em
	$\mc{F}$-ideals}, that is, ideals of the form $\mrm{Exh}(\varphi)$ where $\varphi$ is induced by $\Phi$ in the way mentioned above(we will see that most important analytic P-ideals are of this form). We prove that the following are equivalent:
	(i) $\mc{F}$ is precompact, that is, $\overline{\mc{F}}\subseteq [\om]^{<\om}$ (see \cite{Jordi15} and \cite{Jordi-Stevo} for applications of precompact families), (ii) $\mrm{EXH}(\Phi_\mc{F})$ is $c_0$-saturated, (iii) $\mrm{EXH}(\Phi_\mc{F})$
	does not contain $\ell_1$, and (iv) nontrivial $\mc{F}$-ideals are not $F_\sigma$. 

We apply this result frequently in the next sections. In Section \ref{Ptak}, we present a combinatorial application related to Fremlin's DU problem; also we show that this application enables us to strengthen Pt\'{a}k's Lemma, a classical theorem in combinatorics and Mazur's Lemma, a basic tool in Banach space theory.

In Section \ref{schreier} we briefly discuss properties of the \emph{Schreier ideals} $\mathcal{I}_\alpha$ ($\alpha<\omega_1$),  ideals generated by the so called Schreier families $\mathcal{S}_\alpha$. We show that the following holds: (a) $\mathcal{I}_1$ is the ideal of asymptotic density zero sets; (b) $\mathcal{I}_\alpha$ is summable-like and not $F_\sigma$ for $\al>1$ (a quite
rare combination of properties); and (c) that $(\mc{I}_\al)$ forms a strictly $\subseteq$-decreasing sequence, intertwined by a sequence of $F_\sigma$ ideals.

In Section \ref{Schur}, we investigate Banach spaces induced by families which are far from being precompact. We show that the family associated with the so called Farah ideal induces a Banach space which possesses the Schur property but which is not isomorphic
to $\ell_1$. As a generalization of this example, we provide a sufficient condition for $\mrm{EXH}(\Phi_\mc{F})$ to satisfy the Schur property.

\subsection*{Acknowledgement} The authors would like to thank Jordi Lopez-Abad for his useful remarks and fruitful discussions on the subject of this article.

\section{Basics of analytic P-ideals}\label{analPsec}

We start with basic definitions, examples, and facts concerning analytic P-ideals. For more we refer the reader e.g. to \cite{hrusaksummary}. Recall that $\mc{I}\subseteq\mc{P}(\Omega)$ is an ideal on an infinite set $\Omega$ if it contains all
finite subsets of $\Omega$, it is hereditary (that is, $A\subseteq B\in\mc{I}$ implies $A\in\mc{I}$), and it is closed under taking unions of finitely many elements from it. In order to simplify the list of conditions in certain results, we allow
$\Omega\in\mc{I}$, i.e. $\mc{P}(\Omega)$ is considered to be an ideal; all other ideals are called proper. In our investigations $\Omega$ will always be countably infinite, and hence, without loss of generality, mostly we will work with $\Omega=\om$.
We will use the notations $\mc{I}^+=\mc{P}(\Omega)\setminus\mc{I}$ and $\mc{I}\clrest X=\{A\subseteq X\colon A\in\mc{I}\}$.

\medskip
An ideal $\mc{I}$ on $\om$ is {\em $F_\sigma$ } ({\em Borel, analytic}, etc) if $\mc{I}$ is an $F_\sigma$ (Borel, analytic, etc) subset of $\mc{P}(\om)$ identified with compact Polish space $2^\omega$ in the standard way. $\mc{I}$ is a {\em
P-ideal} if for each countable $\mc{C}\subseteq\mc{I}$ there is an
$A\in\mc{I}$ such that $C\subseteq^* A$ for each $C\in\mc{C}$ (where $C\subseteq^* A$ iff $C\setminus A$ is finite). Finally, $\mc{I}$ is {\em tall} if each infinite subset of $\om$ contains an infinite element of $\mc{I}$.

For example, the ideal of finite sets $\mrm{Fin}=[\om]^{<\om}$ is an $F_\sigma$ P-ideal which is not tall; the classical {\em summable ideal}
\[ \mc{I}_{1/n}=\bigg\{A\subseteq\om\setminus\{0\}\colon \sum_{n\in A}\frac{1}{n}<\infty\bigg\} \] is an $F_\sigma$ tall P-ideal; and the {\em
density zero ideal}
\[ \mc{Z}=\bigg\{A\subseteq\om\setminus\{0\}\colon  \frac{|A\cap [1,n]|}{n}\to 0\bigg\}\] is an $F_{\sigma\delta}$ (not $F_\sigma$)
tall P-ideal. We will see more general examples later.

\medskip
A function $\varphi\colon \mc{P}(\om)\to [0,\infty]$ is a {\em submeasure} (on $\om$) if $\varphi(\emptyset)=0$, ${\varphi}(X)\le \varphi(X\cup Y)\le \varphi(X)+\varphi(Y)$ whenever $X,  Y\subseteq\om$, and $\varphi(\{n\})<\infty$ for $n\in\om$. A submeasure $\varphi$ is
{\em lower semicontinuous} (lsc) if $\varphi(X)=\sup\{\varphi(F)\colon F\in [X]^{<\om}\}(=\lim_{n\to\infty}\varphi(X\cap n))$ for each $X\subseteq\om$. Submeasures turn out to be the ultimate tool in studying analytic P-ideals. If $\varphi$ is an lsc
submeasure on $\om$ then for $X\subseteq\om$ define $\mrm{tail}_\varphi(X)=\inf\{\varphi(X\setminus F)\colon F\in [\om]^{<\om}\}(=\lim_{n\to\infty}\varphi(X\setminus n))$,\footnote{The original notation for $\mrm{tail}_\varphi(X)$ was $\|X\|_\varphi$, we decided to use a new notation to avoid any confusions when working with Banach spaces.} and let
\begin{align*}
\mrm{Fin}(\varphi)= & \big\{A\subseteq\om\colon  \varphi(X)<\infty\big\},\\
\mathrm{Exh}(\varphi) = & \big\{X\subseteq\omega\colon \mrm{tail}_\varphi(X)=0\big\}.
\end{align*}
It is easy to see that $\mrm{Fin}(\varphi)$ is an $F_\sigma$ ideal,  and similarly $\mathrm{Exh}(\varphi)$ is an $F_{\sigma\delta}$ P-ideal. Clearly, $\mrm{Exh}(\varphi)\subseteq\mrm{Fin}(\varphi)$ always holds, and $\mrm{Exh}(\varphi)$ is tall iff $\varphi(\{n\})\to 0$.

\begin{thm}{\em (\cite{Mazur}, \cite{Solecki96})}\label{char} Let $\mc{I}$ be an ideal on $\om$. Then the following holds: (1) $\mc{I}$ is $F_\sigma$ iff $\mc{I}=\mrm{Fin}(\varphi)$ for some lsc $\varphi$; (2) $\mc{I}$ is an analytic P-ideal iff $\mc{I}=\mrm{Exh}(\varphi)$ for some lsc $\varphi$; and (3) $\mc{I}$ is an $F_\sigma$ P-ideal iff $\mc{I}=\mrm{Fin}(\varphi)=\mrm{Exh}(\varphi)$ for some lsc $\varphi$.
\end{thm}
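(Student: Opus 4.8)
The plan is to prove the three equivalences together, exploiting the fact that they share a common backbone: the passage between an ideal and a canonical lsc submeasure witnessing it. The easy directions are the "if" parts. For (1), I would simply note that $\mrm{Fin}(\varphi)=\bigcup_{k\in\om}\{A\colon\varphi(A)\le k\}$, and that each set $\{A\colon\varphi(A)\le k\}$ is closed in $2^\om$ by lower semicontinuity (if $\varphi(A)>k$ then some finite $F\subseteq A$ has $\varphi(F)>k$, and all supersets of $F$ avoid the set); hence $\mrm{Fin}(\varphi)$ is $F_\sigma$. For (2), I would argue that $\mrm{Exh}(\varphi)=\bigcap_{m}\bigcup_{n}\{A\colon\varphi(A\setminus n)\le 1/m\}$ is $F_{\sigma\delta}$, and that it is a P-ideal: given countably many $A_j\in\mrm{Exh}(\varphi)$, choose $n_j$ with $\varphi(A_j\setminus n_j)\le 2^{-j}$ and set $A=\bigcup_j(A_j\setminus n_j)$; then $A_j\subseteq^* A$, and $\mrm{tail}_\varphi(A\setminus m)\le\sum_{j\ge m}2^{-j}\to 0$, so $A\in\mrm{Exh}(\varphi)$. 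For (3) the "if" direction is immediate from (1) and (2), since $\mrm{Fin}(\varphi)=\mrm{Exh}(\varphi)$ forces the ideal to be both $F_\sigma$ and a P-ideal.

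The substantive content is in the "only if" directions, which is the theorem of Mazur (for (1)) and Solecki (for (2)). For (1), given an $F_\sigma$ ideal $\mc{I}=\bigcup_k C_k$ with each $C_k$ closed and WLOG increasing and closed under subsets, the idea is to define $\varphi(A)=\min\{k\colon A\in C_k\}$ on finite sets and extend by lower semicontinuity, i.e. $\varphi(A)=\sup\{\varphi(F)\colon F\in[A]^{<\om}\}$; one must check subadditivity, which reduces to the finite case and uses that $\mc{I}$ is closed under finite unions (arranging the $C_k$ so that $C_k+C_k\subseteq C_{2k}$ or similar). Then $\mrm{Fin}(\varphi)=\mc{I}$ because $A\in\mc{I}$ iff $A\in C_k$ for some $k$ iff every finite subset of $A$ lies in $C_k$ (using that $C_k$ is closed) iff $\varphi(A)<\infty$. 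For (2), Solecki's argument is the hard part: starting from an analytic P-ideal $\mc{I}$, one uses that a P-ideal which is analytic is automatically $F_{\sigma\delta}$ (a separate but standard fact) and then constructs the submeasure by a careful fusion/rank argument on a tree of finite sets, ensuring both that $\varphi$ is a submeasure and that $\mrm{tail}_\varphi(A)=0$ exactly captures membership in $\mc{I}$.

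For (3), the "only if" direction combines the two constructions: if $\mc{I}$ is an $F_\sigma$ P-ideal, apply (1) to get $\psi$ with $\mrm{Fin}(\psi)=\mc{I}$, and apply (2) to get $\chi$ with $\mrm{Exh}(\chi)=\mc{I}$; then one needs to manufacture a single submeasure doing both jobs. The cleanest route is to show that for the $\varphi$ produced by Mazur's construction from an $F_\sigma$ \emph{P}-ideal, one automatically has $\mrm{Exh}(\varphi)=\mrm{Fin}(\varphi)$: if $A\in\mrm{Fin}(\varphi)=\mc{I}$, then since $\mc{I}$ is a P-ideal and contains all finite sets, using the P-ideal property on the sequence $(A\cap n)_n$ together with $A$ itself one extracts a witness forcing $\mrm{tail}_\varphi(A)=0$; conversely $\mrm{Exh}(\varphi)\subseteq\mrm{Fin}(\varphi)$ is automatic. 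The main obstacle is genuinely Solecki's direction in (2)—the combinatorial construction of the submeasure from an abstract analytic P-ideal—so in the write-up I would state that we follow \cite{Mazur} and \cite{Solecki96} for those two implications and give full details only for the routine "if" directions and for the bookkeeping in (3).
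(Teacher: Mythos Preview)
The paper does not prove Theorem~\ref{char}; it is quoted as a cited background result (attributed to Mazur and Solecki) with no argument supplied. So there is no proof in the paper to compare your proposal against.

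On its own merits your sketch is largely the standard one and the easy directions are fine (with the minor slip that your displayed bound for the P-ideal argument in (2) should read $\varphi(A\setminus N)\le\sum_{j<m}\varphi((A_j\setminus n_j)\setminus N)+\sum_{j\ge m}2^{-j}$ for suitable $m,N$, not ``$\mrm{tail}_\varphi(A\setminus m)$''). However, your argument for the ``only if'' in (3) has a genuine gap. You propose to show that Mazur's $\varphi$ automatically satisfies $\mrm{Exh}(\varphi)=\mrm{Fin}(\varphi)$ by ``using the P-ideal property on the sequence $(A\cap n)_n$''. But each $A\cap n$ is finite, so $A\cap n\subseteq^* B$ holds for \emph{every} $B$; the P-ideal property applied to this sequence is vacuous and yields nothing about $\mrm{tail}_\varphi(A)$. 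Nor does the simple combination $\varphi=\psi+\chi$ (with $\psi$ from Solecki and $\chi$ from Mazur) work without further input: one gets $\mrm{Fin}(\varphi)=\mc{I}$ easily, but $\mrm{Exh}(\varphi)=\mc{I}$ requires $\mc{I}\subseteq\mrm{Exh}(\chi)$, which is not automatic from $\mc{I}=\mrm{Fin}(\chi)$. The honest route is the one you already flag for (2): part (3) is contained in Solecki's construction, which when applied to an $F_\sigma$ P-ideal produces a submeasure with $\mrm{Fin}(\varphi)=\mrm{Exh}(\varphi)$; you should cite \cite{Solecki96} for this rather than attempt the shortcut above.
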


In particular, analytic P-ideals are $F_{\sigma\delta}$. Let us overview some basic classes of analytic P-ideals.

\begin{exa}\label{exa-farah} {\em Summable ideals and Farah ideals.} Let $h\colon \om\to [0,\infty)$ be a function. 
The {\em summable ideal generated by $h$} is
\[ \mc{I}_h=\bigg\{A\subseteq\om\colon \sum_{n\in A} h(n)<\infty\bigg\}.\]
Summable ideals are $F_\sigma$ P-ideals, and $\mc{I}_h$ is tall iff $h(n)\to 0$. Of course, $\mc{I}_{1/n}$ is a summable ideal up to an bijection between $\om$ and $\om\setminus \{0\}$ (similarly, properties/classes of ideals on $\om$ are extended to ideals on arbitrary countably infinite sets).

In \cite[Example 1.11.1]{Farah} Farah gave an example of an  $F_\sigma$ P-ideal which is not a summable ideal. Motivated by his example we define a class of ideals which we call Farah ideals and which contain all summable ideals: Let $(P_n)$ be a partition of $\om$ into finite nonempty sets, and let $\vec\vartheta=(\vartheta_n)$ be a sequence of submeasures, $\vartheta_n\colon \mc{P}(P_n)\to [0,\infty)$. 
The {\em Farah ideal generated by $\vec\vartheta$} is
\[ \mc{J}_{\vec\vartheta}=\bigg\{A\subseteq\om\colon \sum_{n\in\om}\vartheta_n(A)<\infty\bigg\},\]
where $\vartheta_n(A)=\vartheta_n(A\cap P_n)$. All these ideals are $F_\sigma$ P-ideals, and $\mc{J}_{\vec\vartheta}$ is tall iff $\max\{\vartheta_n(\{i\})\colon i\in P_n\}\xrightarrow{n\to\infty}0$. Clearly, if all $\vartheta_n$ are measures then $\mc{J}_{\vec\vartheta}$ is a summable ideal and of course all summable ideals are obviously Farah ideals.
Notice that $\varphi(A)=\sum_{n\in\om}\vartheta_n(A)$ is an lsc submeasure and $\mc{J}_{\vec\vartheta}=\mrm{Fin}(\varphi)=\mrm{Exh}(\varphi)$.

Farah's original example on $\om\setminus\{0\}$ is the special case when $P_n=[2^n,2^{n+1})$, $\vartheta_0(\{1\})=1$, and $\vartheta_n=\min\{n,|A|\}/n^2$ for $A\subseteq P_n$ if $n>0$.
\end{exa}

\begin{exa}{\em Density and generalized density ideals.} Let $(P_n)$ and $\vec{\vartheta}=(\vartheta_n)$ be as above. 
The {\em generalized density ideal generated by $\vec{\vartheta}$}
is
\[ \mc{Z}_{\vec{\vartheta}}=\bigg\{A\subseteq\om\colon \limsup_{n\to\infty}\vartheta_n(A)=0\bigg\}.\]
These ideals are $F_{\sigma\delta}$ P-ideals, and $\mc{Z}_{\vec{\vartheta}}$ is tall iff $\max\{\vartheta_n(\{i\})\colon i\in P_n\}\xrightarrow{n\to\infty}0$. 
Clearly $\mc{J}_{\vec\vartheta}\subseteq\mc{Z}_{\vec\vartheta}$. An ideal $\mc{I}$ is a {\em density ideal} if $\mc{I} = \mc{Z}_{\vec{\vartheta}}$ where all submeasures in the sequence are measures.
Clearly, $\psi(A)=\sup\{\vartheta_n(A)\colon n\in\om\}$ is an lsc submeasure and $\mc{Z}_{\vec\vartheta}=\mrm{Exh}(\psi)$.

For example, as
$\mc{Z}=\{A\subseteq\om\setminus\{0\}\colon |A\cap [2^n,2^{n+1})|/2^n\to 0\}$, the density zero ideal is a density ideal.
\end{exa}

Clearly, $\mrm{Fin}$, and in general, trivial modifications of $\mrm{Fin}$, that is, ideals of the form $\{A\subseteq\om\colon |A\cap X|<\om\}$ for some $X\subseteq\om$ are both Farah ideals and a (generalized) density ideals. We will show that apart from these examples, these classes are disjoint.

\begin{thm}\label{nonfsigmachar} (reformulation of \cite[Theorem 3.3]{Solecki96}) An analytic P-ideal $\mc{I}$ is not $F_\sigma$ iff there is a decreasing sequence $(X_n)$ in $\mc{I}^+$ which has no $\mc{I}$-positive pseudointersection (that is, if $X\subseteq^* X_n$ for every $n$, then $X\in\mc{I}$).
\end{thm}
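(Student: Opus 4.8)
The plan is to prove the two implications separately, invoking the representation results of Theorem \ref{char}. It is convenient to argue in terms of $\mrm{tail}_\varphi$, recalling that $\mrm{Exh}(\varphi)^+=\{X\subseteq\om:\mrm{tail}_\varphi(X)>0\}$, that $\mrm{tail}_\varphi$ is monotone, countably subadditive and insensitive to finite modifications of its argument, and that lsc submeasures are countably subadditive. For the direction ``such a sequence exists $\Rightarrow\mc{I}$ is not $F_\sigma$'' I would argue contrapositively. If $\mc{I}$ is $F_\sigma$ then, being a P-ideal, it is an $F_\sigma$ P-ideal, so by Theorem \ref{char}(3) we may write $\mc{I}=\mrm{Fin}(\varphi)=\mrm{Exh}(\varphi)$ for some lsc $\varphi$. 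Given a decreasing $(X_n)$ in $\mc{I}^+$ we have $\varphi(X_n)=\infty$, so by lower semicontinuity we may pick finite $F_n\subseteq X_n$ with $\varphi(F_n)\ge n$; put $X=\bigcup_nF_n$. Since $F_n\subseteq X_n\subseteq X_m$ whenever $n\ge m$, the set $X\setminus X_m$ is finite, so $X$ is a pseudointersection of $(X_n)$; and $\varphi(X)\ge\varphi(F_n)\ge n$ for every $n$, so $X\in\mrm{Fin}(\varphi)^+=\mc{I}^+$. Hence no decreasing sequence in $\mc{I}^+$ can lack an $\mc{I}$-positive pseudointersection.

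For the converse, write $\mc{I}=\mrm{Exh}(\varphi)$ by Theorem \ref{char}(2). The crux is the dichotomy (this is essentially the content borrowed from Solecki): if $\varepsilon:=\inf\{\mrm{tail}_\varphi(X):X\in\mc{I}^+\}$ is positive, then no $X$ has $0<\mrm{tail}_\varphi(X)<\varepsilon$, so
\[ \mc{I}=\{X\subseteq\om:\mrm{tail}_\varphi(X)<\varepsilon\}=\bigcup_{m,j}\{X\subseteq\om:\varphi(X\setminus m)\le\varepsilon-\tfrac1j\}, \]
which is $F_\sigma$ since $X\mapsto X\setminus m$ is continuous and each $\{Y:\varphi(Y)\le t\}$ is closed. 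As we are assuming $\mc{I}$ is not $F_\sigma$, we must have $\varepsilon=0$, so for every $k$ there is $B_k\in\mc{I}^+$ with $\mrm{tail}_\varphi(B_k)<2^{-k}$; replacing $B_k$ by $B_k\setminus m_k$ for a suitable $m_k$ (which keeps it $\mc{I}$-positive) we may further assume $\varphi(B_k)<2^{-k}$.

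Now I would set $X_n=\bigcup_{k\ge n}B_k$. This sequence is decreasing; by countable subadditivity $\varphi(X_n)\le\sum_{k\ge n}2^{-k}=2^{1-n}$, so $\mrm{tail}_\varphi(X_n)\le 2^{1-n}\to 0$, whereas $X_n\supseteq B_n$ gives $\mrm{tail}_\varphi(X_n)\ge\mrm{tail}_\varphi(B_n)>0$; hence each $X_n\in\mc{I}^+$. Finally, if $X\subseteq^*X_n$ for every $n$, then, since $\mrm{tail}_\varphi$ ignores finite sets and is monotone, $\mrm{tail}_\varphi(X)=\mrm{tail}_\varphi(X\cap X_n)\le\varphi(X_n)\le 2^{1-n}$ for all $n$, so $\mrm{tail}_\varphi(X)=0$, i.e.\ $X\in\mc{I}$. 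Thus $(X_n)$ is a decreasing sequence in $\mc{I}^+$ with no $\mc{I}$-positive pseudointersection.

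The only step I expect to be non-mechanical is the displayed dichotomy — the observation that a uniform positive lower bound for $\mrm{tail}_\varphi$ on $\mc{I}^+$ collapses the natural $F_{\sigma\delta}$ description of $\mrm{Exh}(\varphi)$ to an $F_\sigma$ one. Everything else (the pseudointersection assembled from the $F_n$, and the passage from the non-monotone witnessing sequence $(B_k)$ to the decreasing tails $X_n=\bigcup_{k\ge n}B_k$) is routine manipulation of submeasures. One should also first dispose of the degenerate case $\mc{I}=\mc{P}(\om)$, which is closed and hence $F_\sigma$, so excluded by hypothesis.
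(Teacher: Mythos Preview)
The paper does not supply a proof of this theorem; it is stated without argument and attributed to Solecki. So there is nothing in the paper to compare your approach against.

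Your argument is correct and self-contained. Both directions are handled cleanly: the contrapositive for the easy implication (using $\mc{I}=\mrm{Fin}(\varphi)$ to assemble an $\mc{I}$-positive pseudointersection from finite witnesses $F_n$) is standard, and for the harder direction your key step---that $\inf\{\mrm{tail}_\varphi(X):X\in\mc{I}^+\}>0$ would make $\mc{I}$ an $F_\sigma$ set---is exactly the content one extracts from Solecki's argument. The passage from the witnesses $B_k$ with $\varphi(B_k)<2^{-k}$ to the decreasing tails $X_n=\bigcup_{k\ge n}B_k$, together with countable subadditivity of lsc submeasures, is the right way to finish. The only cosmetic remark is that your disposal of the trivial case $\mc{I}=\mc{P}(\om)$ is not really needed in the ``not $F_\sigma$'' direction, since that hypothesis already excludes it; but mentioning it does no harm.
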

\begin{fact}\label{nowheretall} (see \cite[Fact 2.3]{Ways19})
Let $\mc{I}$ be an analytic P-ideal. Then the following are equivalent: (i) $\mc{I}$ is nowhere tall, that is, $\mc{I}\clrest X$ is not tall for any $X\in\mc{I}^+$. (ii) If $\mc{I}=\mrm{Exh}(\varphi)$ then $\mc{I}=\{A\subseteq\om\colon A$ is finite
or $\lim_{n\in A}\varphi(\{n\})=0\}$. (iii) $\mc{I}$ is a trivial modification of $\mrm{Fin}$, or $\mc{I}$ is isomorphic (via a bijection between $\om$ and $\om\times\om$) to the density ideal $\{\0\}\otimes\mrm{Fin}=\{A\subseteq\om\times\om\colon \forall n  \ \{k\colon (n,k)\in A\}$ is finite$\}$.
\end{fact}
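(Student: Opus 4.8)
The plan is to establish the cycle of implications $(iii)\Rightarrow(i)\Rightarrow(ii)\Rightarrow(iii)$. The step $(iii)\Rightarrow(i)$ is routine: for a trivial modification $\{A:|A\cap W|<\infty\}$ of $\mrm{Fin}$, and for $\{\0\}\otimes\mrm{Fin}$, every $X\in\mc{I}^+$ contains an infinite subset --- its trace on $W$, respectively a single infinite column of $X$ --- all of whose infinite subsets are $\mc{I}$-positive, so $\mc{I}\clrest X$ is not tall; since being nowhere tall is invariant under isomorphisms of ideals, the same conclusion holds for any ideal merely isomorphic to $\{\0\}\otimes\mrm{Fin}$.

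For the remaining two implications I would fix, via Theorem~\ref{char}(2), an lsc submeasure $\varphi$ with $\mc{I}=\mrm{Exh}(\varphi)$ and abbreviate $\eps_n=\varphi(\{n\})$. One inclusion in $(ii)$ holds automatically for every such $\varphi$: if $A\in\mc{I}$ is infinite, then for each $k$ and every $m\in A\setminus k$ monotonicity of $\varphi$ gives $\eps_m\le\varphi(A\setminus k)$, hence $\sup\{\eps_m:m\in A,\ m\ge k\}\le\varphi(A\setminus k)\to 0$ as $k\to\infty$ (since $A\in\mrm{Exh}(\varphi)$), i.e.\ $\lim_{n\in A}\eps_n=0$. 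Thus for $(i)\Rightarrow(ii)$ it only remains to show that every infinite $A$ with $\lim_{n\in A}\eps_n=0$ lies in $\mc{I}$, and this is the one step carrying genuine content. Arguing by contradiction, $A\in\mc{I}^+$, so $(i)$ produces an infinite $Y\subseteq A$ with $\mc{I}\clrest Y=[Y]^{<\om}$; but since $\eps_n\to 0$ along $Y$ I can pick $z_0<z_1<\cdots$ in $Y$ with $\eps_{z_i}<2^{-i}$, and then lower semicontinuity together with subadditivity yields $\mrm{tail}_\varphi(\{z_i:i\in\om\})\le\sum_{i>j}\eps_{z_i}<2^{-j}$ for every $j$, so $Z:=\{z_i:i\in\om\}$ is an infinite subset of $Y$ lying in $\mrm{Exh}(\varphi)=\mc{I}$ --- contradicting $\mc{I}\clrest Y=[Y]^{<\om}$.

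For $(ii)\Rightarrow(iii)$ I would apply $(ii)$ to the fixed $\varphi$ to obtain $\mc{I}=\{A:A$ is finite or $\lim_{n\in A}\eps_n=0\}$ and then analyse this ideal head-on. Partition $\om$ into $D_\infty=\{n:\eps_n=0\}$ and the level sets $E_j=\{n:1/j\le\eps_n<1/(j-1)\}$ for $j\ge 1$ (with $1/0:=\infty$); since $\{n:\eps_n\ge 1/j\}=E_1\cup\cdots\cup E_j$, the description of $\mc{I}$ becomes $\mc{I}=\{A\subseteq\om:|A\cap E_j|<\infty$ for all $j\ge 1\}$. If only finitely many $E_j$ are infinite, the rest impose no constraint and $\mc{I}=\{A:|A\cap W|<\infty\}$ with $W$ the union of the infinite ones --- a trivial modification of $\mrm{Fin}$. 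If infinitely many $E_j$ are infinite, enumerate them as $E_{j_1},E_{j_2},\dots$ and distribute the leftover set $\om\setminus\bigcup_iE_{j_i}$ (which is $D_\infty$ together with the finite $E_j$'s) among them in finite blocks --- singletons suffice, with empty blocks once the leftover is exhausted --- to get a partition $(F_i)_{i\in\om}$ of $\om$ into infinitely many infinite sets with $\mc{I}=\{A:|A\cap F_i|<\infty$ for all $i\}$ (enlarging each $F_i$ by a finite set does not change the ideal). A bijection $\om\to\om\times\om$ sending $F_i$ onto the $i$-th column then witnesses $\mc{I}\cong\{\0\}\otimes\mrm{Fin}$.

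The main obstacle is the step isolated within $(i)\Rightarrow(ii)$: producing an $\mrm{Exh}(\varphi)$-null infinite set inside an arbitrary set along which $\varphi(\{n\})$ tends to $0$, which the geometric-series choice of the points $z_i$ achieves. Everything else is bookkeeping, the only mildly delicate point being the absorption of the leftover set in $(ii)\Rightarrow(iii)$.
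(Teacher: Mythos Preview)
The paper does not supply a proof of this Fact; it is merely stated with a reference to \cite[Fact 2.3]{Ways19}. So there is nothing to compare your argument against in the present paper. Your proof is correct and self-contained: the cycle $(iii)\Rightarrow(i)\Rightarrow(ii)\Rightarrow(iii)$ works, the key step being the construction of the summable-type set $Z\subseteq Y$ inside $(i)\Rightarrow(ii)$, and the level-set partition in $(ii)\Rightarrow(iii)$ handles both the case of finitely many infinite levels (trivial modification of $\mrm{Fin}$) and infinitely many (isomorphic to $\{\0\}\otimes\mrm{Fin}$) cleanly. One small point of presentation: since $(ii)$ is a universal statement over all lsc $\varphi$ with $\mc{I}=\mrm{Exh}(\varphi)$, in the proof of $(i)\Rightarrow(ii)$ you should begin with ``let $\varphi$ be arbitrary with $\mc{I}=\mrm{Exh}(\varphi)$'' rather than ``fix'' one via Theorem~\ref{char}; your argument already works uniformly in $\varphi$, so this is only a phrasing issue.
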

\begin{cor}\label{nowheretallcor}
A generalized density ideal $\mc{Z}_{\vec\vartheta}$ is either a trivial modifications of $\mrm{Fin}$, or it is not $F_\sigma$ (in particular, not a Farah ideal).
\end{cor}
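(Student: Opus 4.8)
The plan is to prove the stated dichotomy directly, according to whether $\mc{Z}_{\vec\vartheta}$ is \emph{somewhere tall}, i.e.\ whether $\mc{Z}_{\vec\vartheta}\clrest Y$ is tall for some $Y\in\mc{Z}_{\vec\vartheta}^+$. I use two standing facts: a restriction $\mc{I}\clrest Z$ of an $F_\sigma$ ideal is again $F_\sigma$, so to prove that an ideal is not $F_\sigma$ it is enough to exhibit a restriction of it that is not $F_\sigma$; and, by Theorem \ref{nonfsigmachar}, an analytic P-ideal is not $F_\sigma$ once it admits a $\subseteq$-decreasing sequence in its positive part with no positive pseudointersection. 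Suppose first that $\mc{Z}_{\vec\vartheta}$ is somewhere tall; I claim it is not $F_\sigma$. Restricting to the witnessing $Y$, the ideal $\mc{Z}_{\vec\vartheta}\clrest Y$ is again a generalized density ideal (restrict each $\vartheta_n$ to $P_n\cap Y$), and it is now proper and tall, hence (by the tallness criterion for generalized density ideals) $\delta_n:=\max\{\vartheta_n(\{i\}):i\in P_n\}\to 0$, while properness gives $\limsup_n\vartheta_n(P_n)>0$. Choosing $0<\eps<\limsup_n\vartheta_n(P_n)$ and restricting once more to the union of the infinitely many blocks on which $\vartheta_n(P_n)>\eps$ (which keeps the ideal tall and of the same form), I may assume $\vartheta_n(P_n)\ge\eps$ for all $n$ and $\delta_n\to 0$. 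Now fix an enumeration of each $P_n$; for each integer $k\ge 1/\eps$ let $Q^k_n$ be the longest initial segment $Q$ of $P_n$ (in this enumeration) with $\vartheta_n(Q)\le 1/k$, and put $X_k=\bigcup_n Q^k_n$. Then $Q^{k+1}_n\subseteq Q^k_n$, so $(X_k)_{k\ge 1/\eps}$ is $\subseteq$-decreasing; subadditivity of $\vartheta_n$ together with $\delta_n\to 0$ and $\vartheta_n(P_n)\ge\eps\ge 1/k$ gives $1/(2k)<\vartheta_n(Q^k_n)\le 1/k$ for all large $n$, so $\limsup_n\vartheta_n(X_k)\in[1/(2k),1/k]$ and $X_k\in\mc{Z}_{\vec\vartheta}^+$. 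Finally, if $X\subseteq^* X_k$ for every $k$, then --- the sets $P_n\setminus Q^k_n$ being pairwise disjoint --- $X\cap P_n\subseteq Q^k_n$ for all but finitely many $n$, so $\limsup_n\vartheta_n(X)\le 1/k$ for every such $k$, and therefore $\limsup_n\vartheta_n(X)=0$, i.e.\ $X\in\mc{Z}_{\vec\vartheta}$. Thus $(X_k)$ has no positive pseudointersection, and Theorem \ref{nonfsigmachar} proves the claim.

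If instead $\mc{Z}_{\vec\vartheta}$ is nowhere tall, then by Fact \ref{nowheretall} it is either a trivial modification of $\mrm{Fin}$ --- the first alternative --- or isomorphic to $\{\0\}\otimes\mrm{Fin}$; in the second case it suffices to note that $\{\0\}\otimes\mrm{Fin}$ is not $F_\sigma$, because the sets $X_k=\{(m,n):m\ge k\}$ are $\subseteq$-decreasing and positive while any $X$ with $X\subseteq^* X_k$ for all $k$ has every row finite and hence lies in $\{\0\}\otimes\mrm{Fin}$, so Theorem \ref{nonfsigmachar} applies once more. The parenthetical ``not a Farah ideal'' is then immediate, since Farah ideals are $F_\sigma$ (Example \ref{exa-farah}).

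The crux is the somewhere-tall case: one must arrange, after the two harmless restrictions, that for each fixed $k$ the profile $n\mapsto\vartheta_n(Q^k_n)$ is bounded below by a positive constant (so that $X_k$ stays positive) but bounded above by $1/k\to 0$ \emph{uniformly in $n$} (so that every pseudointersection collapses into the ideal). Choosing the $Q^k_n$ greedily as initial segments of one fixed enumeration of $P_n$ is exactly what makes these compatible: it yields nestedness $Q^{k+1}_n\subseteq Q^k_n$ for free, and $\delta_n\to 0$ keeps the overshoot small enough that the lower bound $1/(2k)$ persists. The reductions are routine but should be spelled out, since they are where properness and tallness are converted into the inequalities $\vartheta_n(P_n)\ge\eps$ and $\delta_n\to 0$ that the construction consumes.
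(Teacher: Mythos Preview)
Your proof is correct and follows essentially the same strategy as the paper's: split via Fact~\ref{nowheretall} into the nowhere-tall case (reducing to $\{\emptyset\}\otimes\mrm{Fin}$) and the somewhere-tall case, then in the latter restrict to a tall generalized density ideal and build a decreasing sequence of positive sets with no positive pseudointersection by choosing, within each block $P_n$, a subset of submeasure roughly $1/k$, and invoke Theorem~\ref{nonfsigmachar}. The only cosmetic differences are that the paper makes a single restriction and proves the slightly stronger intermediate claim that one can hit \emph{any} prescribed value of $\mrm{tail}_\psi$ below the current one (choosing $Z_n\subseteq P_n$ maximal with $\eta_n(Z_n)\le\eps$), whereas you make a second restriction to force $\vartheta_n(P_n)>\eps$ uniformly and then take $Q^k_n$ as initial segments of a fixed enumeration, which buys you the nestedness $X_{k+1}\subseteq X_k$ for free.
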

\begin{proof}
Applying Theorem \ref{nonfsigmachar}, the sequence $X_n=(\om\setminus n)\times\om$ witnesses that $\{\0\}\otimes\mrm{Fin}$ is not $F_\sigma$. Now if $\mc{Z}_{\vec\vartheta}$ is neither this ideal nor a trivial modification of $\mrm{Fin}$, then,
according to Fact \ref{nowheretall},  there is an $X\in\mc{Z}_{\vec\vartheta}^+$ such that $\mc{Z}_{\vec\vartheta}\clrest X$ is tall. Notice that $\mc{Z}_{\vec\vartheta}\clrest X$ is also a generalized density ideal: If $\vartheta_n\colon
\mc{P}(P_n)\to[0,\infty)$, $Q_n=X\cap P_n$, and $\eta_n=\vartheta_n\clrest\mc{P}(Q_n)$, then $\mc{Z}_{\vec\vartheta}\clrest X=\mc{Z}_{\vec\eta}$. Let $\psi$ be the canonical lsc submeasure generating $\mc{Z}_{\vec\eta}$, that is,
	$\psi(A)=\sup\{\eta_n(A)\colon n\in\om\}$.

We show that if $Y\subseteq X$ is $\mc{Z}_{\vec\eta}$-positive and $0<\eps<\mrm{tail}_\psi(Y)$, then there is a $Z\subseteq Y$ such that $\mrm{tail}_\psi(Z)=\eps$. This is enough because then we can construct a sequence $Y_0\supseteq Y_1\supseteq\cdots$ in $\mc{Z}_{\vec\eta}^+$ such that $\mrm{tail}_\psi(Y_n)\to 0$, in particular, all pseudointersections of this sequence belong to $\mc{Z}_{\vec\eta}$. Applying Theorem \ref{nonfsigmachar}, $\mc{Z}_{\vec\eta}$ is not $F_\sigma$, and hence neither is $\mc{Z}_{\vec\vartheta}$.

If $Y$ and $\eps$ are as above, then there is an $E\in [\om]^\om=\{A\subseteq \om\colon |A|=\om\}$ such that $\eta_n(Y)>\eps$ for every $n\in E$. As $\mc{Z}_{\vec\eta}$ is tall, we know that $\max\{\eta_n(\{i\})\colon i\in Q_n\}\to 0$, and hence we
can assume that $\forall n\in E \ \forall i\in Q_n \ \eta_n(\{i\})<\eps$.
Now, for each $n$, fix a $Z_n\subseteq Y\cap Q_n$ such that $\eta_n(Z_n)\leq \eps$ but $\eta_n(Z_n\cup\{i\})>\eps$ for every $i\in (Y\cap Q_n)\setminus Z_n$. Then clearly $\eta_n(Z_n)\to\eps$ and hence $Z=\bigcup_{n\in E}Z_n$ is as required.
\end{proof}

There are many analytic P-ideals which are neither $F_\sigma$ nor generalized density ideals; one of the most
important example of such an ideal is the following:

\begin{exa}\label{exa-trN} {\em Trace ideal of the null ideal.} The {\em $G_\delta$-closure} of a set $A\subseteq 2^{<\om}=\{s\colon s$ is a function, $\dom(s)\in\om$, and $\ran(s)\subseteq 2\}$ is defined as $[A]_\delta=\{x\in 2^\om\colon \exists^\infty n
	\ x\clrest n\in A\}$ where $\exists^\infty$ stands for ``there exists infinitely many'' (and dually, $\forall^\infty$ means ``for all but finitely many''). Then $[A]_\delta$ is a $G_\delta$ subset of $2^\om$, and every $G_\delta$ subset of $2^\om$ is of this form. Let $\mc{N}$ denote the $\sigma$-ideal of null subsets of $2^\om$ with respect to the usual product probability measure. The trace of $\mc{N}$ is defined as follows:
\[ \mathrm{tr}(\mc{N})=\big\{A\subseteq 2^{<\om}\colon [A]_\delta\in \mc{N}\big\}.\]
This is a $F_{\sigma\delta}$ tall P-ideal. If $\varphi(A)=\sum\{2^{-|s|}\colon s\in A$ is $\subseteq$-minimal$\}$, then $\varphi$ is an lsc submeasure and $\mrm{tr}(\mc{N})=\mrm{Exh}(\varphi)$.

We know (see \cite{Hrusak-Hernandez}) that $\mrm{tr}(\mc{N})$ is {\em totally bounded}, that is, whenever $\mrm{tr}(\mc{N})=\mrm{Exh}(\psi)$ for some lsc $\psi$, then $\psi(2^{<\om})<\infty$, in particular, applying Theorem \ref{char} (1), $\mrm{tr}(\mc{N})$ is not $F_\sigma$. Also, we know (see \cite[Proposition 6.2]{Our-paper}) that $\mrm{tr}(\mc{N})$ {\em summable-like} (see the definition later), hence it is not a generalized density ideal either. 
\end{exa}

\section{Basics of Banach spaces}
In this section we recall some basic notions and results from the theory of Banach spaces, see any classical textbook on the subject for more details (e.g. Schlumprecht's online available lecture notes \cite{Schlumprecht}). All Banach spaces we work with are considered over $\mbb{R}$. A sequence $(e_n)$ in a Banach space $X$ is {\em basic} if for every $x\in [(e_n)]\colon =\overline{\mrm{span}}(\{e_n\colon n\in\om\})$ there is a unique $(\al^x_n)\in\mbb{R}^\om$ such that $x=\sum_{n\in\om}\al^x_ne_n$. A basic sequence $(e_n)$ in $X$ is a {\em (Schauder) basis} of $X$ if $X=[(e_n)]$.

Two basic sequences $(e_n)$ in $X$ and $(v_n)$ in $Y$ are {\em (isometrically) equivalent} if there is an (isometric) isomorphism $T\colon [(e_n)]\to [(v_n)]$ such that $T(e_n)=v_n$ for every $n$.

\medskip
Recall that if $(x_n)$ is sequence in $X$, then we say that $\sum_{n\in\om}x_n$ is {\em unconditionally convergent} if \[ \exists\; x\in X\; \forall\; \eps>0\;\exists\; N\in \om\; \forall\; F\in [\om]^\om\;\bigg(N\subseteq F\rightarrow \bigg\|x-\sum_{n\in F}x_n\bigg\|<\eps\bigg).\]
Let us mention a couple of equivalent statements: (a) $\forall$ $\eps>0$ $\exists$ $N\in\om$ $\forall$ $E\in [\om\setminus N]^{<\om}$ $\|\sum_{n\in E}x_n\|<\eps$; (b) $\forall$ permutation $\pi\colon \om\to\om$ $\sum_{n\in\om}x_{\pi(n)}$ is convergent; (c) $\forall$ $A\subseteq\om$ $\sum_{n\in A}x_n$ is convergent; and (d) $\forall$ $(\eps_n)\in \{\pm 1\}^\om$ $\sum_{n\in\om}\eps_nx_n$ is convergent.

The classical Dvoretzky-Rogers theorem (see \cite{Dvoretzky-Rogers}) says that in each infinite dimensional $X$ there is a sequence $(x_n)$ such that $\sum_{n\in\om} x_n$ is unconditionally convergent but $\sum_{n\in\om}\|x_n\|=\infty$.

\medskip
Now, a basic sequence (basis, resp.) $(e_n)$ is {\em unconditional} if $x=\sum_{n\in\om}\al^x_ne_n$ converges unconditionally for every $x\in [(e_n)]$. For every unconditional basic sequence (basis, resp.) $(e_n)$ there are $C,K\in [1,\infty)$ such that for every $n\in\om$, $F\subseteq n$, $\al_k\in\mbb{R}$, and $\eps_k=\pm1$ ($k<n$) the following holds:
\[\bigg\|\sum_{k\in F}\al_ke_k\bigg\|\leq C\bigg\|\sum_{k<n}\al_ke_k\bigg\|\;\;\text{and}\;\;
\bigg\|\sum_{k<n}\eps_k\al_ke_k\bigg\|\leq K\bigg\|\sum_{k<n}\al_ke_k\bigg\|.\]
Moreover, the following stronger form of the second inequality also holds: \[ \bigg\|\sum_{k<n}\al_ke_k\bigg\|\leq K\bigg\|\sum_{k<n}\be_ke_k\bigg\|\;\,\text{whenever $|\al_k|\leq|\be_k|$.}\]
If $K$ is such a constant, then we say that the base $(e_n)$ is {\em $K$-unconditional}. If $C$ and $K$ are the smallest such that the above inequalities hold, then $C\leq K\leq 2C$. Also, if any of these inequalities holds for a sequence $(e_n)$ of non zero elements from $X$ (with all possible parameters in the inequality and fixed $C$ or $K$), then $(e_n)$ is a ($K$-)unconditional basic sequence in $X$.

If $(e_n)$ is a(n unconditional) basis in $X$ then the associated {\em coordinate functionals} $e^*_n\colon X\to\mbb{R}$ are determined by $x=\sum_{n\in\om}e^*_n(x)e_n$ for every $x$. Then $e_n^*\in X^*=\{$bounded linear functionals on $X\}$ and $(e^*_n)$ is a(n unconditional) basic sequence in $X^*$.

\medskip
We will need two special types of bases:

We say that a basis $(e_n)$ in $X$ is {\em shrinking} if $(e^*_n)$ is a basis in $X^*$ (i.e. $[(e^*_n)]=X^*$); equivalently, $\forall x^*\in X^* \ \|x^*\clrest [(e_n)_{n\geq N}]\|\xrightarrow{N\to\infty}0$; equivalently, $(e_n^*)$ is a boundedly complete basis (see below) in $[(e_n^*)]$. Furthermore, if $(e_n)$ is an unconditional basis in $X$, then $(e_n)$ is shrinking iff $X$ does not contain a copy of $\ell_1$. For example, the standard basis of $c_0$ is shrinking.

We say that a basis $(e_n)$ in $X$ is {\em boundedly complete} if $\sum_{n\in\om}a_ne_n$ is convergent whenever $\sup\{\|\sum_{n<N}a_ne_n\|\colon N\in\om\}<\infty$; equivalently, the natural embedding $X\to [(e_n^*)]^*$ is an isomorphism; equivalently, $(e_n^*)$ is a shrinking basis in $[(e_n^*)]$.  Furthermore, if $(e_n)$ is an unconditional basis in $X$, then $(e_n)$ is boundedly complete iff $X$ does not contain a copy of $c_0$. For example, the standard basis of $\ell_1$ is boundedly complete.

\medskip
In general, characterisations of those spaces which contain copies of $c_0$ or $\ell_1$ are among the most well-studied fundamental problems in the theory of Banach spaces. Let us here recall two related results we will need later. We say that a
sequence $(x_n)$ in $X$ is \emph{perfectly bounded} if $\sup\{ \|\sum_{n\in F}x_n \|\colon F\in [\om]^{<\om}\}<\infty$.

\begin{thm}\label{Bessaga} {\em (Bessaga-Pe\l czy\'nski $c_0$ theorem, see e.g. \cite{Bessaga58})}
Let $X$ be a Banach space. Then $X$ does not contain a copy of $c_0$ iff $\sum_{n\in \omega} x_n$ is unconditionally convergent for every perfectly bounded sequence $(x_n)$ from $X$.
\end{thm}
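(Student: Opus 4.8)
The plan is to prove the Bessaga--Pełczyński $c_0$ theorem by establishing both implications, with the non-trivial direction being that perfectly bounded series being unconditionally convergent forces the absence of a copy of $c_0$.

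\medskip
\textbf{The easy direction.} Suppose $X$ contains a copy of $c_0$, i.e.\ there is an isomorphic embedding $T\colon c_0\to X$. Let $(u_n)$ be the standard unit vector basis of $c_0$ and put $x_n=T(u_n)$. For any finite $F\subseteq\om$ we have $\|\sum_{n\in F}x_n\|=\|T(\sum_{n\in F}u_n)\|\leq \|T\|\cdot\|\sum_{n\in F}u_n\|_{c_0}=\|T\|$, so $(x_n)$ is perfectly bounded. However $\sum_{n\in\om}x_n$ is not unconditionally convergent: by equivalent condition (a) recalled before Theorem \ref{Bessaga}, unconditional convergence would require $\|\sum_{n\in E}x_n\|$ to be small for finite $E\subseteq\om\setminus N$ with $N$ large, but $\|\sum_{n\in E}x_n\|\geq \|T^{-1}\|^{-1}\|\sum_{n\in E}u_n\|_{c_0}=\|T^{-1}\|^{-1}$ for every nonempty such $E$. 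This contradicts unconditional convergence, so this direction is done.

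\medskip
\textbf{The hard direction.} Assume $\sum_{n\in\om}x_n$ is unconditionally convergent whenever $(x_n)$ is perfectly bounded; we must show $X$ has no copy of $c_0$. I would argue by contraposition: suppose $(x_n)$ is perfectly bounded but $\sum x_n$ is \emph{not} unconditionally convergent, and construct a copy of $c_0$ inside $X$. First, note $(x_n)$ need not be basic, so the first step is a \emph{gliding hump / block basis} extraction. Since $\sum x_n$ fails unconditional convergence, by condition (a) there is an $\eps>0$ and finite sets $E_0<E_1<E_2<\cdots$ (consecutive blocks of $\om$) with $\|\sum_{n\in E_k}x_n\|\geq\eps$ for all $k$. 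Set $y_k=\sum_{n\in E_k}x_n$. The key observation is that the new sequence $(y_k)$ is \emph{still perfectly bounded}: for any finite $G\subseteq\om$, $\|\sum_{k\in G}y_k\|=\|\sum_{n\in\bigcup_{k\in G}E_k}x_n\|\leq\sup\{\|\sum_{n\in F}x_n\|\colon F\in[\om]^{<\om}\}=:M$. Now I claim $(y_k)$ is equivalent to the unit vector basis of $c_0$. One inequality is automatic from perfect boundedness combined with a sign argument: for scalars $(a_k)$ with $\max_k|a_k|\leq 1$ supported on a finite set $G$, a standard averaging over signs (or a direct splitting of $G$ into the index sets where $a_k$ has each sign, then a convexity/triangle estimate) gives $\|\sum_{k\in G}a_k y_k\|\leq C\cdot M\cdot\max_k|a_k|$ for an absolute constant $C$ (one can take $C=2$, using $\|\sum a_k y_k\|\le\|\sum_{a_k\geq 0}a_ky_k\|+\|\sum_{a_k<0}|a_k|y_k\|$ and that each of these is at most $M\max|a_k|$ by approximating the coefficients from above, or more carefully via an integral representation $a_k=\int_0^1\mathbf{1}_{[0,|a_k|]}(t)\,dt$ times a sign). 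For the reverse inequality $\max_k|a_k|\leq \|\sum a_k y_k\|/\eps'$ for some $\eps'>0$, this is the delicate point and is \emph{not} true for an arbitrary such sequence — one must first pass to a further subsequence of $(y_k)$ that is basic with small basis constant. This is achieved by the Bessaga--Pełczyński selection principle: since the partial sums $\|\sum_{k<N}y_k\|$ are bounded, if they stay bounded away from $0$ one extracts a basic subsequence; and then the lower $c_0$-estimate on a subsequence follows because on a basic sequence the coordinate functionals are bounded, forcing $|a_j|\leq(\text{const})\|\sum a_ky_k\|$ for each fixed $j$, hence for the max.

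\medskip
\textbf{Main obstacle.} The genuine difficulty is the lower estimate $\max_k|a_k|\lesssim\|\sum_k a_ky_k\|$: perfect boundedness alone does not prevent the $y_k$ from being ``nearly linearly dependent'' (e.g.\ all nearly equal to a fixed vector), in which case $\sum_{k<N}y_k$ would grow and contradict perfect boundedness — so in fact perfect boundedness already rules out the worst case, and the clean way to organize this is: from $\|\sum_{k<N}y_k\|\leq M$ for all $N$ together with $\|y_k\|\geq\eps$, a gliding-hump argument produces a subsequence $(y_{k_j})$ which is a basic sequence with some basis constant, and then boundedness of the partial-sum operators plus the uniform lower bound on norms delivers, via the coordinate functionals $y_{k_j}^*$ with $\sup_j\|y_{k_j}^*\|<\infty$, the inequality $|a_j|\le\|y^*_{k_j}\|\cdot\|\sum_i a_iy_{k_i}\|$ and hence $\sup_j|a_j|\le(\sup_j\|y^*_{k_j}\|)\|\sum_i a_iy_{k_i}\|$. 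Combined with the upper estimate this shows $(y_{k_j})\sim(u_j)$ in $c_0$, so $\overline{\mathrm{span}}(y_{k_j})$ is the desired copy of $c_0$ in $X$, completing the contrapositive. I would write the selection principle invocation carefully, as it is the technical heart; everything else is triangle inequalities and the equivalent reformulations of unconditional convergence already recorded in the text.
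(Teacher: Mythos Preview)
The paper does not prove Theorem~\ref{Bessaga}; it is stated as a classical result with a reference to \cite{Bessaga58} and used later as a black box. So there is no ``paper's own proof'' to compare against.

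That said, your sketch is along the right lines but has one genuine gap. In the hard direction you correctly extract blocks $y_k=\sum_{n\in E_k}x_n$ with $\|y_k\|\ge\eps$ and $(y_k)$ still perfectly bounded, and your upper $c_0$-estimate via the layer-cake representation is fine. The problem is your invocation of the ``selection principle'': you assert that boundedness of the partial sums together with $\|y_k\|\ge\eps$ yields a basic subsequence by a gliding-hump argument, but gliding-hump requires the $y_k$ to be weakly null (or at least to have a weakly null subsequence after subtracting a weak limit), and you never establish this. The missing observation is that perfect boundedness of $(y_k)$ actually \emph{forces} $y_k\to 0$ weakly: for any $x^*\in X^*$, taking $F_N^+=\{k<N:x^*(y_k)>0\}$ gives $\sum_{k\in F_N^+}x^*(y_k)\le\|x^*\|M$, and similarly for the negative part, so $\sum_k|x^*(y_k)|<\infty$ and hence $x^*(y_k)\to 0$. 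Once you have this, Mazur's theorem (weakly null, norms bounded below $\Rightarrow$ basic subsequence) applies, and then your coordinate-functional argument for the lower estimate goes through. Without this step the ``basic subsequence'' extraction is unjustified.
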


In the next theorem we need certain notions associated to the weak topologies of Banach spaces, we give rather direct definitions here without defining the weak topology itself: A sequence $(a_n)$ in $X$ {\em converges weakly} to $a\in X$ if $x^*(a_n)\to x^*(a)$ for every $x^*\in X^*$; and similarly, $(a_n)$ is {\em weakly Cauchy} if $(x^*(a_n))$ is Cauchy/convergent in $\mbb{R}$ for every $x^*\in X^*$.

\begin{thm}\label{Ros} {\em (Rosenthal $\ell_1$ theorem, see \cite{Rosenthal-l1}}). Let $X$ be a Banach space and $(a_n)$ a bounded sequence in $X$. Then $(a_n)$ has a subsequence $(a_n)_{n\in E}$ ($E\in [\om]^\om$) such that (exactly) one of the following holds: (a) $(a_n)_{n\in E}$ is weakly Cauchy; (b) $(a_n)_{n\in E}$ is equivalent to the standard basis of $\ell_1$.
\end{thm}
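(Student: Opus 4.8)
The plan is to reinterpret the bounded sequence as a uniformly bounded sequence of real functions, reduce the assertion to a purely combinatorial dichotomy, and then read off the two cases. First I would set $S=B_{X^*}$ and regard each $a_n$ as the function $f_n\colon S\to\mbb{R}$, $f_n(x^*)=x^*(a_n)$; since $(a_n)$ is bounded, the $f_n$ are uniformly bounded, say $|f_n|\le C$ on $S$, and $\|\sum_n c_na_n\|=\sup_{x^*\in S}|\sum_n c_nf_n(x^*)|$ for every finitely supported scalar sequence $(c_n)$. The key point of the reduction is that a subsequence $(a_n)_{n\in E}$ is weakly Cauchy \emph{iff} $(f_n(x^*))_{n\in E}$ converges for every $x^*\in S$ (convergence on $S$ upgrades to all of $X^*$ by homogeneity). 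So it is enough to show: an arbitrary uniformly bounded sequence $(f_n)$ of real functions on a set $S$ has a subsequence that converges pointwise on $S$, or one equivalent to the unit vector basis of $\ell_1$.

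The core of the proof is Rosenthal's combinatorial lemma. For a sequence $(A_n^0,A_n^1)$ of pairs of disjoint subsets of $S$, call it \emph{independent} if $\bigcap_{n\in F}A_n^0\cap\bigcap_{n\in G}A_n^1\ne\0$ for all disjoint finite $F,G\subseteq\om$; the lemma asserts that every such sequence has a subsequence $(A_n^0,A_n^1)_{n\in M}$ which is either independent, or has the property that for each $x\in S$ only finitely many $n\in M$ satisfy $x\in A_n^0$, or only finitely many $n\in M$ satisfy $x\in A_n^1$. I would apply this, for each pair of rationals $p<q$, to $A_n^0=\{f_n<p\}$ and $A_n^1=\{f_n>q\}$, and then diagonalise over the countably many pairs $p<q$ to obtain a single subsequence indexed by some $E\in[\om]^\om$ along which, for every $p<q$, one of the two alternatives of the lemma holds. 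The proof of the lemma itself is the main obstacle: it is carried out by an iterated extraction / fusion argument on the tree of finite ``consistent'' Boolean words (alternatively, one can invoke the Galvin--Prikry theorem), and it is exactly here that the combinatorial weight of the theorem sits.

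If for some pair $p<q$ the independent alternative holds along $E$, then $(a_n)_{n\in E}$ is equivalent to the $\ell_1$-basis: given scalars $(c_n)_{n\in E}$, put $P=\{n:c_n\ge0\}$ and $Q=\{n:c_n<0\}$; by independence choose $x\in S$ with $f_n(x)>q$ for $n\in P$ and $f_n(x)<p$ for $n\in Q$, and $y\in S$ with the inequalities reversed. A short computation yields $\sum_n c_nf_n(x)-\sum_n c_nf_n(y)\ge(q-p)\sum_n|c_n|$, hence $\|\sum_n c_na_n\|\ge\tfrac{q-p}{2}\sum_n|c_n|$, which together with $\|\sum_n c_na_n\|\le C\sum_n|c_n|$ is precisely equivalence to the unit vector basis of $\ell_1$. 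If instead the second alternative holds for every pair $p<q$, then $(f_n(x^*))_{n\in E}$ is Cauchy for every $x^*\in S$: otherwise $\liminf$ and $\limsup$ along $E$ would differ, and any rational pair $p<q$ strictly between them would witness infinitely many $n$ with $f_n(x^*)<p$ and infinitely many with $f_n(x^*)>q$, contradicting the lemma's conclusion; thus $(a_n)_{n\in E}$ is weakly Cauchy. Finally the two alternatives exclude each other: the $\ell_1$-basis is not weakly Cauchy (composing its alternating-sign coordinate functional with the isomorphism and extending by Hahn--Banach produces a functional in $X^*$ whose values on the subsequence oscillate), so no subsequence can satisfy both (a) and (b), and ``exactly one'' follows.
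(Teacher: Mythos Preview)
The paper does not prove this theorem at all: it is stated with a reference to Rosenthal's original paper \cite{Rosenthal-l1} and used as a black box, so there is nothing to compare your argument against within the paper itself.

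That said, your sketch is the standard proof of Rosenthal's theorem and is correct in outline. The reduction to uniformly bounded real functions on $S=B_{X^*}$, the formulation of the combinatorial dichotomy for pairs $(A_n^0,A_n^1)=(\{f_n<p\},\{f_n>q\})$, the diagonalisation over rational pairs, the extraction of the $\ell_1$-lower bound from independence, and the pointwise-Cauchy conclusion in the convergent alternative are all as in Rosenthal's original argument. You are also right that the combinatorial lemma is where the real work sits; your remark that it can be obtained either by a direct fusion argument or from an infinite Ramsey-type theorem is accurate. One small point of phrasing: in stating the ``convergent'' alternative of the lemma, make explicit that the disjunction is pointwise, i.e.\ for each $x\in S$ at least one of the two index sets $\{n\in M:x\in A_n^0\}$, $\{n\in M:x\in A_n^1\}$ is finite; your wording could be read either way. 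The mutual exclusion paragraph is fine.
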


At the end of this section, as the first application of Banach space theory to analytic P-ideals, we will present a natural common generalization of Farah ideals and generalized density ideals (motivated by examples from \cite{LV}).

\begin{exa} \label{farah-and-generalized}{\em Common generalization of Farah and generalized density ideals.} Let $X\subseteq\mbb{R}^\om$ be a linear space equipped with a norm $\|\cdot\|$ generating a complete topology such that the canonical algebraic basis
$(e_n)$ of $c_{00}=\{x=(x_n)\in\mbb{R}^\om\colon \forall^\infty$ $n$ $x_n=0\}$ is a $1$-unconditional base of $X$ (e.g. $X=c_0$ or $X=\ell_p$, $1\leq p<\infty$). Let $(P_n)$ and $\vec\vartheta=(\vartheta_n)$ be as in the definitions of
$\mc{J}_{\vec\vartheta}$ and $\mc{Z}_{\vec\vartheta}$. 
We associate the following family to $\vec\vartheta$ and $X$:
\[ \mc{J}_{\vec\vartheta,X}= \big\{A\subseteq\om\colon \big(\vartheta_0(A),\vartheta_1(A),\dots\big)\in X\big\}\]
where of course $\vartheta_n(A)=\vartheta_n(A\cap P_n)$. Applying that $(e_n)$ is unconditional, it is easy to show that this family is an ideal. Now if $\vec\vartheta$ and $X$ are as above, then define $\varphi_{\vec\vartheta,X}\colon \mc{P}(\om)\to[0,\infty]$ as follows:
\[\varphi_{\vec\vartheta,X}(A)=\sup\bigg\{\bigg\|\sum_{n\in\om}\vartheta_n(F)e_n\bigg\|\colon F\in [A]^{<\om}\bigg\}.\]
Applying $1$-unconditonality of $(e_n)$, it is easy to show that $\varphi_{\vec\vartheta,X}$ is subadditive, and of course, it is monotone and lsc by definition; furthermore, a straightforward argument shows that $\mc{J}_{\vec\vartheta,X}=\mrm{Exh}(\varphi_{\vec\vartheta,X})$, hence $\mc{J}_{\vec\vartheta,X}$ is an analytic P-ideal.

Notice that $\mc{J}_{\vec\vartheta}=\mc{J}_{\vec\vartheta,\ell_1}$ and $\mc{Z}_{\vec\vartheta}=\mc{J}_{\vec\vartheta,c_0}$.
\end{exa}

\begin{rem}
An easy application of the Bessaga-Pe\l cz\'nski $c_0$ theorem shows that the ideal $\mc{J}_{\vec\vartheta,X}$ is $F_\sigma$ whenever $X$ does not contain a copy of $c_0$ .
\end{rem}

\section{Representing ideals in Banach spaces}\label{reprsetting}

In \cite{Our-paper} the authors considered a natural generalization of summable ideals. Let $G$ be a completely metrizable Abelian group. Unconditional convergence of infinite sums in $G$ is defined just like in Banach spaces, and the appropriate translations of its equivalent formalizations hold as well. Now, fix a sequence $h\colon \omega \to G$ and define
\[ \mathcal{I}^G_h= \bigg\{A\subseteq\omega\colon \sum_{n\in A} h(n)\;\text{is unconditionally convergent in}\;G\bigg\}. \]
Then $\mc{I}^G_h$ is an ideal on $\omega$, and ideals of this form are called  \emph{representable} in $G$.

\begin{exa} (see \cite{Our-paper})
Summable ideals are exactly those represented in $\mathbb{R}$. Similarly, an ideal is representable in $\mbb{R}^\om$ iff it is the intersection of countable many summable ideals (such an ideal is not necessarily $F_\sigma$). Finally, we show that $\mc{Z}$ is representable in $c_0$ (over $\om\setminus \{0\}$ for now):  $\mc{Z}=\mc{I}^{c_0}_h$ where $h(n)=2^{-k}e_k$ if $n\in [2^k,2^{k+1})$ and $(e_k)$ is the standard basis of $c_0$ (similarly, every non-pathological generalized density ideal is representable in $c_0$).
\end{exa}

One of the main theorems of \cite{Our-paper} says that the class of ideals represented in Polish Abelian groups coincide with the class of analytic P-ideals, and the class of those represented in Banach spaces is still quite large containing all important examples. Allow us to first state the theorem, and give a detailed introduction to the notion we use in it afterwards:

\begin{thm}\label{PAGBSrepr} (\cite{Our-paper}) An ideal is representable in a Polish Abelian group iff it is an analytic P-ideal. An ideal is representable in a Banach space iff it is a non-pathological analytic P-ideal.
\end{thm}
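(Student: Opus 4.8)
The plan is to read off, from whatever data witnesses representability, a single lower semicontinuous submeasure, and then apply Theorem \ref{char}. (Recall that an lsc submeasure $\varphi$ is \emph{non-pathological} if $\varphi(A)=\sup\{\mu(A)\colon\mu\text{ a measure},\ \mu\le\varphi\}$, and an analytic P-ideal is non-pathological if it equals $\mrm{Exh}(\varphi)$ for such a $\varphi$.) Two general facts will be used throughout. First, on a \emph{complete} metrizable abelian group $G$ with compatible invariant metric $d$, the unordered sum $\sum_{n\in A}h(n)$ is unconditionally convergent iff $\forall\eps>0\ \exists F_0\in[\om]^{<\om}\ \forall F\in[A\setminus F_0]^{<\om}\ d(\sum_{n\in F}h(n),0)<\eps$, the group version of condition (a). Second, an \emph{arbitrary} Polish group carrying a compatible invariant metric $d$ is automatically complete for $d$: otherwise its $d$-completion would be a Polish group in which $G$ and a translate $gG$ with $g\notin G$ are disjoint dense $G_\delta$ sets, contradicting Baire's theorem. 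By Birkhoff--Kakutani every abelian Polish group admits such a $d$, so the Cauchy criterion is always available.

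\emph{The Polish group equivalence.} If $\mc{I}=\mc{I}^G_h$ with $G$ abelian Polish, fix an invariant $d\le1$ and set $\varphi(A)=\sup\{d(\sum_{n\in F}h(n),0)\colon F\in[A]^{<\om}\}$; subadditivity of $x\mapsto d(x,0)$ applied to $F=(F\cap A)\sqcup(F\setminus A)$ makes $\varphi$ subadditive, it is monotone, lsc and $\varphi(\{n\})\le1$, hence an lsc submeasure, and the Cauchy criterion yields $A\in\mc{I}^G_h\Leftrightarrow\mrm{tail}_\varphi(A)=0$; thus $\mc{I}=\mrm{Exh}(\varphi)$ is an analytic P-ideal. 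Conversely, given an analytic P-ideal $\mc{I}=\mrm{Exh}(\varphi)$, discard the $\varphi$-null singletons (by lsc and subadditivity $\varphi$ does not see them, and they are represented by the zero element) so that $\varphi(\{n\})>0$ for all $n$, and on $c_{00}$ put $\|x\|_\varphi=\int_0^\infty\min\{1,\varphi(\{n\colon|x_n|>t\})\}\,dt$. This is finite, symmetric, vanishes only at $0$, and satisfies $\|x+y\|_\varphi\le2(\|x\|_\varphi+\|y\|_\varphi)$ (split $t$ as $t/2+t/2$ and use subadditivity of $\varphi$), so the balls $\{\|x\|_\varphi<r\}$ form a neighbourhood basis at $0$ for a Hausdorff, metrizable, separable ($\mbb{Q}$-valued finitely supported sequences are dense) group topology on $c_{00}$; let $G$ be its completion, a Polish abelian group. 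Since $\|\sum_{n\in F}e_n\|_\varphi=\min\{1,\varphi(F)\}$, the Cauchy criterion in $G$ gives $\sum_{n\in A}e_n$ unconditionally convergent $\Leftrightarrow\mrm{tail}_\varphi(A)=0$, i.e. $\mc{I}=\mc{I}^G_{(e_n)}$.

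\emph{The Banach space equivalence.} If $\mc{I}=\mc{I}^X_{(x_n)}$ we may replace $X$ by the separable space $\overline{\mrm{span}}\{x_n\colon n\in\om\}$ and run the previous argument with $d(\cdot,0)=\|\cdot\|$, obtaining $\mc{I}=\mrm{Exh}(\varphi)$ with $\varphi(A)=\sup\{\|\sum_{n\in F}x_n\|\colon F\in[A]^{<\om}\}$. For each $x^*\in X^*$ with $\|x^*\|\le1$ the set function $\mu_{x^*}(A)=\sum_{n\in A}\max\{x^*(x_n),0\}$ is a (possibly infinite) measure, and $\mu_{x^*}\le\varphi$ since $\sum_{n\in F}x^*(x_n)=x^*(\sum_{n\in F}x_n)\le\|\sum_{n\in F}x_n\|$ for finite $F\subseteq A$; moreover, using $\|v\|=\sup\{x^*(v)\colon\|x^*\|\le1\}$ (Hahn--Banach), $\varphi(A)=\sup\{\mu_{x^*}(A)\colon\|x^*\|\le1\}$. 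Hence $\varphi$ is a supremum of measures it dominates, so $\mc{I}$ is a non-pathological analytic P-ideal. Conversely, let $\mc{I}=\mrm{Exh}(\varphi)$ with $\varphi(A)=\sup\{\mu(A)\colon\mu\in M\}$, each $\mu\in M$ a measure $\le\varphi$ (again arrange $\varphi(\{n\})>0$ for all $n$). Put $\Phi(x)=\sup\{\sum_n|x_n|\mu(\{n\})\colon\mu\in M\}$ on $\mbb{R}^\om$; this is a monotone, lsc extended norm with $\Phi(e_n)=\varphi(\{n\})\in(0,\infty)$, so by the results of Section \ref{main} the exhaustive space $X=\mrm{EXH}(\Phi)$ is a Banach space with $1$-unconditional basis $(e_n)$. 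Since $\Phi(\sum_{n\in F}e_n)=\varphi(F)$, condition (a) together with completeness of $X$ gives $A\in\mc{I}^X_{(e_n)}\Leftrightarrow\mrm{tail}_\varphi(A)=0$, i.e. $\mc{I}=\mc{I}^X_{(e_n)}$.

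\emph{Where the work is.} The two ``representable $\Rightarrow$ (non-pathological) analytic P-ideal'' halves are soft once the Cauchy criterion and the automatic $d$-completeness of Polish groups with invariant metric are in hand. The genuinely substantive points are: in the group case, verifying that the merely quasi-subadditive gauge $\|\cdot\|_\varphi$ really does generate a separable, metrizable --- hence Polishable --- group topology on $c_{00}$ and behaves correctly under completion; and in the Banach case, the Hahn--Banach identity exhibiting the norming submeasure $A\mapsto\sup_F\|\sum_{n\in F}x_n\|$ as a supremum of measures, which is precisely the extra rigidity that distinguishes Banach spaces from general Polish groups here. The final step, recognising $\mrm{EXH}(\Phi)$ as the required Banach space, should need only the general theory of exhaustive spaces from Section \ref{main}.
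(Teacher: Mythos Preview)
Your argument is correct and, in fact, covers considerably more ground than the paper does here: the paper only gives ``Remarks on the proof'' establishing the forward implications (representable $\Rightarrow$ analytic P-ideal, respectively non-pathological analytic P-ideal), deferring the converses entirely to \cite{Our-paper}. For the forward direction in the group case your construction of $\varphi$ is identical to the paper's $\varphi^G_h$. For the Banach case the approaches diverge: the paper first embeds $X$ into $\ell_\infty$ and reads off coordinate measures $\mu_k(A)=\sum_{n\in A}|h(n)_k|$, or equivalently works with the \emph{signed} submeasure $\psi^X_h(A)=\sup\{\|\sum_{n\in F}\eps_n h(n)\|:F\in[A]^{<\om},\,\eps_n=\pm1\}$, whereas you stay in $X$ and use Hahn--Banach to exhibit $\varphi^X_h$ itself as a supremum of the positive-part measures $\mu_{x^*}$. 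Your route is a little more direct and avoids the auxiliary $\ell_\infty$ step; the paper's route has the small advantage that the relevant measures are explicit sequences rather than parametrised by the dual ball. One cosmetic point: your displayed inequality $\sum_{n\in F}x^*(x_n)\le\|\sum_{n\in F}x_n\|$ does not literally bound $\mu_{x^*}(F)=\sum_{n\in F}\max\{x^*(x_n),0\}$; you need to apply it to $F^+=\{n\in F:x^*(x_n)>0\}$, which is of course still a finite subset of $A$. For the converse in the Banach case, your use of $\mrm{EXH}(\Phi)$ is exactly what the paper later records as Proposition~\ref{re-presentation}; note this is a forward reference to Section~\ref{main}, harmless here since that proposition does not rely on Theorem~\ref{PAGBSrepr}. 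Your construction of the Polish group via the integral quasi-norm $\|x\|_\varphi=\int_0^\infty\min\{1,\varphi(\{n:|x_n|>t\})\}\,dt$ is not in this paper at all and is a clean, self-contained replacement for whatever \cite{Our-paper} does.
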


If $\wt{\mu}=\{\mu_\al\colon \al<\ka\}$ is a family of measures on $\om$ such that $\sup\{\mu_\al(\{n\})\colon \al<\ka\}<\infty$ for every $n$, then their pointwise supremum $\varphi_{\wt{\mu}}(A)=\sup\{\mu_\al(A)\colon \al<\ka\}$ is always an lsc
submeasure. Submeasures of this form are called {\em non-pathological}. Notice that if $\varphi$ is non-pathological, then  there is a countable family $\wt{\mu}=\{\mu_n\colon n\in\om\}$ of measures such that $\varphi(F)=\varphi_{\wt{\mu}}(F)$ for
every $F\in [\om]^{<\om}$, and hence (by lsc) $\varphi=\varphi_{\wt{\mu}}$. Furthermore, if we replace $\mu_n$ with $\{\mu_n\clrest E\colon E\in [\om]^{<\om}\}$, then $\varphi_{\wt{\mu}}$ does not change, and hence we can assume that
$\mrm{supp}(\mu_n)=\{k\in\om\colon \mu_n(\{k\})\ne\0\}$ is finite for every $n$. Actually, a trivial compactness argument shows that if $\varphi$ is non-pathological, then for every $F\in [\om]^{<\om}$ there is a measure $\mu_F$ such that
$\mrm{supp}(\mu_F)\subseteq F$, $\mu_F\leq \varphi$, and $\mu_F(F)=\varphi(F)$; in particular $\varphi=\sup\{\mu_F\colon F\in [\om]^{<\om}\}$. Moreover, applying the Banach-Alaoglu theorem, one can show that this hold for every $F\in\mrm{Fin}(\varphi)$.

Also, by extending $\wt{\mu}$ with countable many measures of finite support, we can always assume (without changing $\mrm{Exh}(\varphi_{\wt{\mu}})$) that $\bigcup_{n\in\om}\mrm{supp}(\mu_n)=\om$, i.e. that $\varphi_{\wt{\mu}}(\{n\})>0$ for every
$n$. From now on when using a family $\wt{\mu}$ of measures, we will always assume that $\wt{\mu}=\{\mu_n\colon n\in\om\}$ is countable, $\mrm{supp}(\mu_n)$ is finite for every $n$, and  $0<\sup\{\mu_n(\{k\})\colon n\in\om\}<\infty$ for every $k$.

\medskip
We say that an analytic P-ideal $\mc{I}$ is {\em non-pathological} if there is a non-pathological $\varphi$ such that  $\mc{I}=\mrm{Exh}(\varphi)$ (it is trivial to construct pathological submeasures, for examples of pathological ideals see
\cite[Theorem 1.9.5]{Farah}). For example, if $\vartheta_n$ is non-pathological for every $n$, then the Farah ideal $\mc{J}_{\vec\vartheta}$ and the generalized density ideal $\mc{Z}_{\vec\vartheta}$ are non-pathological. Trace of null is also
non-pathological: Let $\mc{F}=\{F\subseteq 2^{<\om}\colon F$ is a finite antichain$\}$, for every $F\in\mc{F}$ define the measure $\mu_F$ as $\mu_F(A)=\sum_{s\in A\cap F}2^{-|s|}$, and let $\wt{\mu}=\{\mu_F\colon F\in\mc{F}\}$. Then $\mrm{tr}(\mc{N})=\mrm{Exh}(\varphi_{\wt{\mu}})$.

\medskip
\begin{proof}[Remarks on the proof of Theorem \ref{PAGBSrepr}] Let $G$ and $h$ be as above and fix an invariant compatible (hence complete) metric $d$ on $G$. Then $\varphi^G_h(A)=\sup\{d(0,\sum_{n\in F}h(n))\colon F\in [A]^{<\om}\}$ is an lsc submeasure and $\mc{I}^G_h=\mrm{Exh}(\varphi^G_h)$.

\smallskip
If $\mc{I}$ is represented in a Banach space $X$, then it is is represented in $\ell_\infty$, $\mc{I}=\mc{I}^{\ell_\infty}_h$,  $h(n)=(h(n)_0,h(n)_1,\dots)$. Switching to  $h'(n)=(|h(n)_0|,|h(n)_1|,\dots)$ does not change the ideal, and if we define
the measures $\mu_k(A)=\sum_{n\in A}|h(n)_k|$ then $\varphi^{\ell_\infty}_{h'}=\sup\{\mu_k\colon k\in\om\}$ is non-pathological. If we do not wish to embed $X$ in $\ell_\infty$, then the above construction pulled back to $X$ gives us the
non-pathological submeasure \[ \psi^X_h(A)=\sup\bigg\{\bigg\|\sum_{n\in F}\eps_nh(n)\bigg\|\colon F\in [A]^{<\om}\;\text{and}\;(\eps_n)\in \{\pm 1\}^F\bigg\}\]
and $\mc{I}^X_h=\mrm{Exh}(\psi^X_h)$.
\end{proof}

There is an important special class of non-pathological submeasures: for every cover $\mc{F}\subseteq [\om]^{<\om}$ of $\om$ and sequence $\tau=(\tau_n)\in(0,\infty)^\om$, define $\wt{\mu}(\mc{F},\tau)$ as follows: $\mu\in \wt{\mu}(\mc{F},\tau)$ iff
$\mu$ is a measure on $\om$, $\mrm{supp}(\mu)\in \mc{F}$, and $\mu(\{n\})=\tau_n$ for $n\in\mrm{supp}(\mu)$; then $\wt{\mu}(\mc{F},\tau)$ is a family of measures as above. Let $\varphi_{\mc{F},\tau}=\varphi_{\wt{\mu}(\mc{F},\tau)}$ and
$\mc{I}_{\mc{F},\tau}=\mrm{Exh}(\varphi_{\mc{F},\tau})$. Most of our main examples of analytic P-ideals can be easily written of this form: in the case of $\mc{I}_h$ let $\mc{F}=[\om]^{<\om}$ and $\tau_n=h(n)$. In the case of Farah's example let
$\mc{F}=\{F\in [\om\setminus\{0\}]^{<\om}\colon \forall$ $n>0$ $|F\cap [2^n,2^{n+1})|= n\}$, $\tau_1=1$, and $\tau_k=1/n^2$ for $k\in [2^n,2^{n+1})$ and $n>0$.  Finally, it is easy to present density ideals and $\mrm{tr}(\mc{N})$ in this form.

As we will see later, combinatorics of ideals of this simple form is strongly related to properties of certain easily definable but interesting Banach spaces.

\begin{prob}
Can all non-pathological analytic P-ideals be written of the form $\mc{I}_{\mc{F},\tau}$?
\end{prob}

We close this section with some general results on representability of ideals in Banach spaces. In \cite{Our-paper} the authors considered questions of representability of ideals in classical Banach spaces. The case of $c_0$ seems to be particularly interesting. As summable ideals can be represented in $\mathbb{R}$, they are representable in every Banach space. The next theorem says that apart from these trivial representations no $F_\sigma$ P-ideals can be represented in $c_0$, whereas, as we saw above, $c_0$ is a ``natural''
Banach space for representing density ideals.

\begin{thm}\label{our-theorem}
Among $F_\sigma$ tall P-ideals only summable ideals are representable in $c_0$.
\end{thm}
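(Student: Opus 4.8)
The plan is to represent a given $F_\sigma$ tall P-ideal $\mc{I}$ in $c_0$ and show that unless $\mc{I}$ is already a summable ideal, one runs into a contradiction. So suppose $\mc{I}=\mc{I}^{c_0}_h$ for some sequence $h\colon \om\to c_0$, write $h(n)=(h(n)_k)_{k\in\om}$, and set $h'(n)=(|h(n)_k|)_k$; passing to absolute values does not change the ideal, so we may assume each $h(n)$ has nonnegative coordinates. As in the Remarks on the proof of Theorem \ref{PAGBSrepr}, the non-pathological submeasure generating $\mc{I}$ is $\psi(A)=\psi^{c_0}_h(A)=\sup\{\|\sum_{n\in F}h(n)\|_\infty\colon F\in [A]^{<\om}\}=\sup\{\sum_{n\in F}h(n)_k\colon k\in\om,\ F\in [A]^{<\om}\}=\sup_k \mu_k(A)$, where $\mu_k(A)=\sum_{n\in A}h(n)_k$ is a measure. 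Thus $\mc{I}=\mrm{Exh}(\psi)$ with $\psi=\sup_k\mu_k$. Since $\mc{I}$ is a tall P-ideal, $\psi(\{n\})\to 0$, i.e.\ $\max_k h(n)_k\to 0$; and since $\mc{I}$ is $F_\sigma$, Theorem \ref{char} (3) lets us also pick a \emph{finite} submeasure generating it, but the useful fact will rather be that $\mc{I}=\mrm{Fin}(\psi')=\mrm{Exh}(\psi')$ for some lsc $\psi'$, so that $A\in\mc{I}$ is witnessed by a genuine finite bound on $\psi'(A)$.

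The core of the argument is a dichotomy on how the $\mu_k$ behave. First case: there is a single $k_0$ such that $\mu_{k_0}(A)<\infty$ exactly when $A\in\mc{I}$ — or more precisely, such that $\{A\colon \mu_{k_0}(A)<\infty\}=\mc{I}$ up to the obvious adjustments — in which case $\mc{I}=\mc{I}_{h(\cdot)_{k_0}}$ is a summable ideal and we are done. Second case: no single $\mu_k$ captures $\mc{I}$; then I want to build a decreasing sequence $(X_m)$ in $\mc{I}^+$ with no $\mc{I}$-positive pseudointersection, which by Theorem \ref{nonfsigmachar} contradicts $\mc{I}$ being $F_\sigma$. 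To do this, note that for each $k$, since $\mu_k$ alone does not generate $\mc{I}$ there is $A_k\in\mc{I}^+$ with $\mu_k(A_k)<\infty$; because $h(n)_k\to 0$ in $n$ (each $h(n)\in c_0$ is irrelevant here — rather $\mu_k(A_k)<\infty$ forces $h(n)_k\to 0$ along $A_k$), one can thin $A_k$ out so that $\mu_j(A_k)$ is small for all $j<k$ as well while keeping $A_k\in\mc{I}^+$, essentially because $\mc{I}^+$ is closed under removing $\mc{I}$-sets and each $\mu_j$ ($j<k$) restricted to a co-$\mc{I}$ modification is controllable. Intersecting/diagonalizing these gives a decreasing sequence whose tails have $\sup_{k}\mu_k$-tail going to $0$ on every pseudointersection, so any pseudointersection lands in $\mrm{Exh}(\psi)=\mc{I}$.

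The main obstacle — and where the argument has to be done carefully rather than sketched — is the thinning step: from the assumption that $\mc{I}$ is \emph{not} a summable ideal I must extract, uniformly in $k$, sets in $\mc{I}^+$ on which the first $k$ measures $\mu_0,\dots,\mu_{k-1}$ are simultaneously bounded by (say) $2^{-k}$, and arrange these to be decreasing. The subtlety is that "$\mc{I}$ is not $\mc{I}_{h(\cdot)_k}$ for any $k$" is an existential statement for each fixed $k$ separately, and one needs it to interact with the P-ideal and $F_\sigma$ structure to produce a single coherent sequence; here one uses that $\mc{I}$ is a P-ideal to pseudo-intersect the countably many witnesses, and tallness (hence $\psi(\{n\})\to 0$) to ensure the pseudo-intersection stays $\mc{I}$-positive after the thinning rather than collapsing into $\mc{I}$. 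An alternative, possibly cleaner route to the same contradiction: use that a representation in $c_0$ forces the canonical images to form a weakly null (indeed $c_0$-like) sequence, so by the Bessaga--Pe\l czy\'nski theorem (Theorem \ref{Bessaga}) and the fact (from the excerpt) that $\mc{J}_{\vec\vartheta,c_0}=\mc{Z}_{\vec\vartheta}$ together with Corollary \ref{nowheretallcor}, an $F_\sigma$ tall P-ideal represented in $c_0$ that is not summable would be a non-trivial generalized density ideal, contradicting that generalized density ideals that are $F_\sigma$ are trivial modifications of $\mrm{Fin}$ and hence not tall. I expect the write-up to follow whichever of these two makes the "not summable $\Rightarrow$ genuinely two-dimensional behavior of the $\mu_k$" step most transparent; the second is shorter but leans on more of the machinery already assembled.
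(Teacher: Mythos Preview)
The paper does not actually prove this theorem; it is quoted from the authors' earlier work, and only the Remark following Theorem~\ref{compactly-supported} sketches (for a strengthening to $C(\gamma+1)$) how the machinery of Claims~C and~D there would yield it. So there is no line-by-line comparison to make; let me instead assess your two routes.

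Your second route has a genuine gap. The identity $\mc{J}_{\vec\vartheta,c_0}=\mc{Z}_{\vec\vartheta}$ from Example~\ref{farah-and-generalized} concerns a very special representation built from submeasures living on a \emph{partition} $(P_n)$. An arbitrary $h\colon\om\to c_0$ produces measures $\mu_k(A)=\sum_{n\in A}|h(n)_k|$ with heavily overlapping supports, and there is no reason $\mrm{Exh}(\sup_k\mu_k)$ should be a generalized density ideal. So Corollary~\ref{nowheretallcor} does not apply, and the short argument collapses.

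Your first route is closer in spirit to what the Remark does, but two things are off. First, the dichotomy ``a single $\mu_{k_0}$ captures $\mc{I}$'' is too narrow: one can have $\mc{I}=\mc{I}_{\mu_0}\cap\mc{I}_{\mu_1}=\mc{I}_{\mu_0+\mu_1}$ summable with no single $\mu_k$ doing the job; the correct split is ``$\mc{I}=\bigcap_{j<k}\mc{I}_{\mu_j}$ for some finite $k$'' versus ``not''. Second, and more seriously, the hypothesis $h(n)\in c_0$ must enter essentially---otherwise the statement is false, since every non-pathological analytic P-ideal is representable in $\ell_\infty$. Your parenthetical ``each $h(n)\in c_0$ is irrelevant here'' is a warning sign: nowhere in the outline do you use that $h(n)_k\to 0$ as $k\to\infty$, and without it the diagonalization cannot force pseudointersections into $\mrm{Exh}(\psi)$. (The paper's approach first perturbs to $h(n)\in c_{00}$ and then runs the Claims~C--D argument, where the finite supports play the role of the $\gamma_n$.) Finally, the P-ideal property gives pseudo\emph{unions} in $\mc{I}$, not pseudo\emph{intersections} in $\mc{I}^+$, so your appeal to it for the thinning step does not do what you need.
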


Representability of ideals was considered independently by Drewnowski and Labuda (in a bit different setting). They proved another theorem revealing some connections between $c_0$ and representability of $F_\sigma$ ideals.

\begin{thm}\label{DrLa} {\em (see \cite{Drewnowski10} and \cite{Drewnowski17})} If $X$ is an infinite dimensional Banach space, then there is an ideal representable in $X$ which is not a summable ideal; furthermore $X$ does not contain a copy of $c_0$ iff $\mc{I}^X_h$ is $F_\sigma$ for every $h\colon \om\to X$.
\end{thm}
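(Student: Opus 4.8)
The plan is to prove the ``furthermore'' equivalence first, since it carries essentially all the content, and then to deduce the first claim from it together with a case distinction. For the direction \emph{``$X$ contains no copy of $c_0$ $\Rightarrow$ every $\mc{I}^X_h$ is $F_\sigma$''}, fix $h\colon\om\to X$. By the remarks on the proof of Theorem \ref{PAGBSrepr} we have $\mc{I}^X_h=\mrm{Exh}(\psi^X_h)$, where $\psi^X_h(A)=\sup\{\|\sum_{n\in F}\eps_nh(n)\|\colon F\in[A]^{<\om},\ (\eps_n)\in\{\pm1\}^F\}$, and up to a factor $2$ this supremum lies between $\sup\{\|\sum_{n\in F}h(n)\|\colon F\in[A]^{<\om}\}$ and twice it; hence $\mrm{Fin}(\psi^X_h)$ is exactly the family of $A$ for which $(h(n))_{n\in A}$ is perfectly bounded. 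Since $X$ contains no copy of $c_0$, the Bessaga--Pe\l czy\'nski theorem (Theorem \ref{Bessaga}) says that for such $A$ the series $\sum_{n\in A}h(n)$ is unconditionally convergent, i.e. $A\in\mc{I}^X_h=\mrm{Exh}(\psi^X_h)$. So $\mrm{Fin}(\psi^X_h)\subseteq\mrm{Exh}(\psi^X_h)$; the reverse inclusion is automatic, hence $\mc{I}^X_h=\mrm{Fin}(\psi^X_h)$, which is $F_\sigma$ by Theorem \ref{char}(1).

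For the converse, \emph{``$X$ contains a copy of $c_0$ $\Rightarrow$ some $\mc{I}^X_h$ is not $F_\sigma$''}, let $T\colon c_0\hookrightarrow X$ be an isomorphic embedding and let $h_0(n)=2^{-k}e_k$ for $n\in[2^k,2^{k+1})$, so that $\mc{I}^{c_0}_{h_0}=\mc{Z}$ as recalled in the excerpt. Put $h=T\circ h_0$. A bounded linear operator preserves unconditional convergence, and so does the inverse of $T$ on its (closed) range; therefore $\sum_{n\in A}h(n)$ converges unconditionally in $X$ iff $\sum_{n\in A}h_0(n)$ does in $c_0$, i.e. $\mc{I}^X_h=\mc{Z}$, which is $F_{\sigma\delta}$ but not $F_\sigma$. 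This also settles the first claim when $X$ contains a copy of $c_0$: then $\mc{Z}$ is representable in $X$, and it is not a summable ideal, since summable ideals are $F_\sigma$.

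It remains to prove the first claim when $X$ has no copy of $c_0$; by the equivalence just proved every representable ideal is then $F_\sigma$, so we must produce a \emph{non-summable} $F_\sigma$ P-ideal representable in $X$. The plan is a block construction: using the two-sided volumetric estimates underlying the Dvoretzky--Rogers theorem (or Dvoretzky's theorem on almost-Euclidean sections), fix for each $k$ unit vectors in $X$ spanning a subspace uniformly isomorphic to $\ell_2^k$, partition $\om$ into blocks $B_k$ with $|B_k|=k$, and put $h$ equal to a suitably scaled such vector (or a constant multiple of a single unit vector) on $B_k$, so that $\mc{I}^X_h$ becomes an ideal of ``Farah type'' in the sense of Example \ref{exa-farah}; one then checks that it is proper and not a summable ideal by a combinatorial argument in the spirit of Farah's original example. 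This last step is the genuine obstacle: naive choices of scaling produce ideals that turn out to be summable (the $\mrm{Exh}/\mrm{Fin}$ machinery and Bessaga--Pe\l czy\'nski reduce membership to perfect boundedness, i.e. to boundedness of $\sup_F\|\sum_{k\in F}w_k(A)\|$), and Dvoretzky's theorem controls the geometry only \emph{inside} each block $B_k$, not how the spans $\mrm{span}\{u^k_j\}$ combine inside $X$; so the scaling and the sections must be arranged so that the induced submeasure is provably generated by no measure, and so that $\mc{I}^X_h\neq\mc{P}(\om)$. Everything else is bookkeeping with the $\mrm{Exh}/\mrm{Fin}$ formalism plus the three imported facts used above: Bessaga--Pe\l czy\'nski, the Mazur--Solecki characterization of $F_\sigma$ ideals (Theorem \ref{char}(1)), and the representability of $\mc{Z}$ in $c_0$.
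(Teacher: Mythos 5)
Your proof of the ``furthermore'' equivalence is correct and essentially coincides with the paper's argument: when $X$ has no copy of $c_0$, Bessaga--Pe\l czy\'nski turns perfect boundedness into unconditional convergence, so $\mc{I}^X_h=\mrm{Fin}(\psi^X_h)$ is $F_\sigma$ (the paper phrases this with $\varphi^X_h$ rather than $\psi^X_h$, which changes nothing), and pushing the representation of $\mc{Z}$ forward through an isomorphic embedding of $c_0$ gives the converse. Your treatment of the first claim in the case that $X$ does contain $c_0$ is also fine.

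The genuine gap is the first claim when $X$ contains no copy of $c_0$ --- the main case --- which you leave as a plan you yourself acknowledge you cannot complete; and the plan is aimed in a harder direction than necessary, since no almost-Euclidean sections and no control of how block subspaces sit inside $X$ are needed, only the bare statement of Dvoretzky--Rogers. The paper's construction works uniformly for every infinite dimensional $X$, so no case distinction on $c_0$ is required at all: fix $(x_n)$ with $\sum_{n\in\om}x_n$ unconditionally convergent but $\sum_{n\in\om}\|x_n\|=\infty$, with $0<\|x_n\|<1$; partition $\om$ into finite blocks $P_n$ with $1\leq |P_n|\cdot\|x_n\|<2$ and set $h(k)=x_n$ for $k\in P_n$, i.e. repeat each $x_n$ roughly $1/\|x_n\|$ many times. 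Then $\|\sum_{k\in P_n}h(k)\|=|P_n|\cdot\|x_n\|\geq 1$, so the union of infinitely many blocks is never in $\mc{I}^X_h$; hence if $\mc{I}^X_h=\mc{I}_f$ for some $f$, one gets $\liminf_{n}\sum_{k\in P_n}f(k)=\eps>0$. Choosing $k_n\in P_n$ with $f(k_n)$ maximal gives $f(k_n)\geq \eps/|P_n|>\eps\|x_n\|/2$, so $\sum_n f(k_n)=\infty$; but $\sum_n h(k_n)=\sum_n x_n$ converges unconditionally, so $\{k_n\colon n\in\om\}\in\mc{I}^X_h=\mc{I}_f$ and $\sum_n f(k_n)<\infty$, a contradiction. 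This is exactly the ``scaling arrangement'' you were searching for: the point is not to tune norms inside blocks via special finite-dimensional subspaces, but to repeat each $x_n$ so that every block has norm-sum about $1$ (making the ideal proper and blocks non-negligible) while the sequence of block representatives still converges unconditionally (defeating summability).
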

\begin{proof}
To prove the first statement, fix a sequence $(x_n)$ such that $\sum_{n\in\om} x_n$ is unconditionally convergent but $\sum_{n\in\om}\|x_n\|=\infty$. We can assume that $0<\|x_n\|<1$ for every $n$. Fix a partition $(P_n)$ of $\om$ into finite sets such that $1\leq |P_n|\cdot\|x_n\|<2$ for every $n$ and define $h\colon \omega \to X$ as $h(k)=x_n$ for every $k\in
	P_n$. We claim that that $\mc{I}^X_h$ is not a summable ideal. Assume on the contrary that $\mc{I}^X_h=\mc{I}_f$ for some $f\colon \omega\to [0,\infty)$. Then $\liminf_{n\in\om}\sum_{k\in P_n}f(k)=\eps>0$ because $\|\sum_{k\in P_n}h(k)\|=|P_n|\cdot\|x_n\|\geq 1$ and hence $\bigcup_{n\in E}P_n\in\mc{I}^+_f$ for every infinite $E\subseteq\om$. Now if $k_n\in P_n$ such that $f(k_n)=\max\{f(k)\colon k\in P_n\}$, then on the one hand, as $\sum_{n\in\om} h(k_n)=\sum_{n\in\om}x_n$ is unconditionally convergent, $\sum_{n\in\om}f(k_n)<\infty$ follows, but on the other hand $f(k_n)\geq \eps/|P_n|> \eps\|x_n\|/2$ and hence $\sum_{n\in\om}f(k_n)\geq \sum_{n\in\om}\eps\|x_n\|/2=\infty$, a contradiction.

\smallskip
Second part of the statement: $\mc{Z}$ is representable in $c_0$ and we know that $\mc{Z}$ is not $F_\sigma$.
Conversely, recall that $\varphi(A)=\varphi^X_h(A)=\sup\{\|\sum_{k\in F} h(k)\|\colon F\in
	[A]^{<\om}\}$ is an lsc submeasure and $\mc{I}^X_h=\mrm{Exh}(\varphi)$ (see the remarks on the proof of Theorem \ref{PAGBSrepr}). Now, by Theorem \ref{Bessaga}, if $X$ does not contain a copy of $c_0$, then $A\in\mc{I}^X_h$ iff $\sum_{n\in A} h(n)$ is unconditionally convergent iff $h\clrest A$ is perfectly bounded iff $\varphi(A)<\infty$. Therefore, $\mc{I}^X_h = \mrm{Fin}(\varphi)$ is $F_\sigma$.
\end{proof}

\section{The Banach spaces $\mrm{FIN}(\Phi)$ and $\mrm{EXH}(\Phi)$}\label{main}

The notion of representability of ideals in Banach spaces connects the theories of Banach space and of analytic P-ideals. One can consider questions of the form: Which ideals are representable in a Banach spaces satisfying certain properties or vice versa, what do we know about Banach spaces in which an ideal or family of ideals is represented?

On the other hand, an ideal can be represented in a Banach space in many different ways and sometimes the fact that an ideal can
be represented in a Banach space does not provide much information if we do not assume something more about the way it is represented. The extreme example illustrating this remark is the case of summable ideals: they are
represented in every Banach space.

In this section we will present a more canonical way of representing ideals in Banach spaces which enables us to avoid this issue. Through a general construction of Banach spaces, analogous to the way we associate ideals to
submeasures, we show that all non-pathological analytic P-ideals have representations via $1$-unconditional basic sequences. As a byproduct of this approach we obtain some structure theorems for Banach spaces (most of them well-known, though).

\medskip
An {\em extended norm} is a function $\Phi\colon \mbb{R}^\om \to [0,\infty]$ satisfying all properties of a norm but possibly taking infinite values (where we let $0\cdot\infty= 0$ at the axiom $\Phi(\al x)=|\al|\Phi(x)$). To avoid strange examples of extended norms, more precisely, to deepen the analogy with lsc submeasures, we will assume that extended norms satisfy some natural additional conditions, we say that $\Phi$ is {\em nice} if the following holds:
 \begin{itemize}
\item[(a)] $\forall x\in c_{00}=\{(x_n)\in\mbb{R}^\om\colon \forall^\infty$ $n$ $x_n=0\} \  \Phi(x)<\infty$;
\item[(b)] (monotonicity) If $x,y\in\mbb{R}^\om$ and $|x_n|\leq |y_n|$ for every $n$, then  $\Phi(x) \leq \Phi(y)$;
\item[(c)] (lower semicontinuity, lsc) $\forall x\in\mbb{R}^\om  \ \Phi(P_n(x))\xrightarrow{n\to \infty}\Phi(x)$
\end{itemize}
where $P_A\colon \mbb{R}^\om\to\mbb{R}^\om$ is the projection associated to $A\subseteq
\omega$, that is, $P_A(x)(n)=x(n)$ if $n\in A$ and $0$ otherwise. Now
fix a nice extended norm $\Phi$, define  $\mrm{tail}_\Phi(x)=\inf\{\Phi(P_{\om\setminus F}(x):F\in [\om]^{<\om})(=\lim_{n\to\infty}\Phi(P_{\om\setminus n}(x)))$, and consider the following linear spaces:
\begin{align*}
\mathrm{FIN}(\Phi) &= \big\{x\in \mbb{R}^\om\colon \Phi(x)<\infty\big\},\\
\mrm{EXH}(\Phi) &= \big\{x\in \mbb{R}^\om\colon\mrm{tail}_\Phi(x) = 0\big\}.
\end{align*}

When working with these spaces, $(e_n)$ denotes the standard (algebraic) basis of $c_{00}$.

\begin{prop} Let $\Phi$ be a nice extended norm. Then $\mrm{EXH}(\Phi)\subseteq \mrm{FIN}(\Phi)$ equipped with $\Phi$ are Banach spaces, $\mrm{EXH}(\Phi)$ is the completion of $c_{00}$,  and $(e_n)$ is a $1$-unconditional basis in $\mrm{EXH}(\Phi)$. Furthermore, every $1$-unconditional basis $(b_n)$ in a Banach space $X$ is isometrically equivalent to $(e_n)$ in $\mrm{EXH}(\Phi)$ for some nice $\Phi$. In particular, every unconditional basis is equivalent to $(e_n)$ in
	$\mrm{EXH}(\Phi)$ for some nice $\Phi$.
\end{prop}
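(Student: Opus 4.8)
The plan is to verify the assertions about $\mrm{EXH}(\Phi)$ and $\mrm{FIN}(\Phi)$ directly from niceness, and then construct the relevant $\Phi$ out of a given unconditional basis. First I would observe that both $\mrm{FIN}(\Phi)$ and $\mrm{EXH}(\Phi)$ are linear subspaces of $\mbb{R}^\om$ (subadditivity and absolute homogeneity of $\Phi$, together with $\mrm{tail}_\Phi(x+y)\le\mrm{tail}_\Phi(x)+\mrm{tail}_\Phi(y)$), that $\Phi$ restricted to either is a genuine norm (positive definiteness: if $\Phi(x)=0$ then by monotonicity $\Phi(x_ne_n)=0$ for each $n$, and since $\Phi(e_n)\in(0,\infty)$ by niceness (a), also $\mrm{tail}_\Phi(\{n\})$-type considerations force $x_n=0$), and that $\mrm{EXH}(\Phi)\subseteq\mrm{FIN}(\Phi)$ because $\mrm{tail}_\Phi(x)=0$ forces $\Phi(P_{\om\setminus F}(x))<\infty$ for some finite $F$, whence $\Phi(x)\le\Phi(P_F(x))+\Phi(P_{\om\setminus F}(x))<\infty$ using niceness (a). The $1$-unconditionality of $(e_n)$ in $\mrm{EXH}(\Phi)$ is immediate from monotonicity (b) in its strong form $|\al_k|\le|\be_k|\Rightarrow\Phi(\sum\al_ke_k)\le\Phi(\sum\be_ke_k)$, which gives the $K=1$ inequality; that $(e_n)$ is a basic sequence follows because the partial-sum projections $P_n$ are norm-$1$ by monotonicity, so they are uniformly bounded.

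Next I would prove completeness and the identification of $\mrm{EXH}(\Phi)$ with the completion of $c_{00}$. For completeness of $\mrm{FIN}(\Phi)$: given a Cauchy sequence, lsc of $\Phi$ (niceness (c)) lets one pass to a coordinatewise limit $x\in\mbb{R}^\om$ and show $\Phi(x-x^{(m)})\to 0$ by the usual estimate $\Phi(P_n(x-x^{(m)}))=\lim_k\Phi(P_n(x^{(k)}-x^{(m)}))\le\limsup_k\|x^{(k)}-x^{(m)}\|$, then let $n\to\infty$ via lsc; one also checks $x\in\mrm{FIN}(\Phi)$. The same argument inside $\mrm{EXH}(\Phi)$ shows it is closed in $\mrm{FIN}(\Phi)$ (a uniform-tail argument: Cauchy-ness gives $\Phi(P_{\om\setminus n}(x))$ small uniformly), hence complete; and since $c_{00}\subseteq\mrm{EXH}(\Phi)$ with $(e_n)$ spanning a dense subspace precisely because $\mrm{tail}_\Phi(x)=0$ means $P_n(x)\to x$ in $\Phi$, we get $\mrm{EXH}(\Phi)=\overline{c_{00}}^{\,\Phi}$ and that $(e_n)$ is actually a basis of it.

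Finally, the representation statement. Given a Banach space $X$ with $1$-unconditional basis $(b_n)$, I would \emph{define} $\Phi$ on $\mbb{R}^\om$ by $\Phi(x)=\sup\{\|\sum_{n<N}x_nb_n\|_X\colon N\in\om\}$ (equivalently $\sup$ over finite subsets by $1$-unconditionality), with value $\infty$ when the sup is infinite. One checks: $\Phi$ is absolutely homogeneous and subadditive (termwise), niceness (a) holds since $c_{00}$-vectors give finite sums, (b) holds by $1$-unconditionality of $(b_n)$, and (c) holds because the defining sup is a sup/limit over initial segments. Then $\mrm{EXH}(\Phi)$ consists exactly of those $x$ with $\sum x_nb_n$ convergent in $X$ (convergence $\Leftrightarrow$ Cauchy partial sums $\Leftrightarrow$ $\mrm{tail}_\Phi(x)=0$, using that $(b_n)$ is a Schauder basis so $\|\sum_{n\in[M,N)}x_nb_n\|$ controls convergence), and the map $x\mapsto\sum x_nb_n$ is an isometry from $\mrm{EXH}(\Phi)$ onto $X$ sending $e_n$ to $b_n$, because $\|\sum x_nb_n\|_X=\lim_N\|\sum_{n<N}x_nb_n\|_X=\Phi(x)$ for convergent series. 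The general unconditional case reduces to this one: any unconditional basis $(b_n)$ is $K$-unconditional for some $K$, and renorming $X$ by $|\!|\!|y|\!|\!|=\sup\{\|\sum_{n}\eps_ny^*_n(y)b_n\|\colon(\eps_n)\in\{\pm1\}^\om\}$ makes $(b_n)$ $1$-unconditional with an equivalent norm, so the previous paragraph applies and gives equivalence (not isometric equivalence) of $(b_n)$ with $(e_n)$ in the corresponding $\mrm{EXH}(\Phi)$.

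I expect the main obstacle to be the completeness/closedness arguments for $\mrm{EXH}(\Phi)$ inside $\mbb{R}^\om$: one must be careful that the coordinatewise limit of a $\Phi$-Cauchy sequence actually lands in $\mrm{EXH}(\Phi)$ (not merely in $\mrm{FIN}(\Phi)$), which requires a genuinely uniform estimate on the tails $\mrm{tail}_\Phi(x^{(m)})$ combined with lsc; everything else is a routine translation between the Banach-space and submeasure formalisms.
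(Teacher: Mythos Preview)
Your proposal is correct and follows essentially the same route as the paper: same inclusion argument, same density and closedness of $\mrm{EXH}(\Phi)$, same definition $\Phi(x)=\sup_N\|\sum_{n<N}x_nb_n\|_X$ for the representation, and the renorming trick for the general unconditional case (which the paper leaves implicit). Your completeness argument for $\mrm{FIN}(\Phi)$ is in fact slightly cleaner than the paper's: you use continuity of $\Phi$ on finite-dimensional coordinate subspaces together with lsc to get $\Phi(x-x^{(m)})\le\limsup_k\Phi(x^{(k)}-x^{(m)})$ directly, whereas the paper first bounds $\Phi(y)\le 2B$ and then proves $x_n\to y$ by a separate contradiction argument; one small correction is that the passage to a coordinatewise limit uses monotonicity (b) rather than lsc (c).
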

\begin{proof}
As $\Phi$ is finite on $c_{00}$ and $\Phi(x)\leq\Phi(P_n(x))+\Phi(P_{\om\setminus n}(x))$, $\mrm{EXH}(\Phi)\subseteq\mrm{FIN}(\Phi)$ follows.

\smallskip
$\mrm{FIN}(\Phi)$ is complete: Let $(x_n)$ be a Cauchy sequence in $\mrm{FIN}(\Phi)$. Applying monotonicity,  $\Phi(P_{\{k\}}(x_n-x_m))\leq \Phi(x_n-x_m)$ for every $k,n,m$, and hence $(P_{\{k\}}(x_n))_{k\in\om}$ is Cauchy in the $k$th $1$-dimensional
coordinate space of $\mbb{R}^\om$ (a Banach space, as $\Phi$ is finite on $c_{00}$), $P_{\{k\}}(x_n)\xrightarrow{n\to\infty}y_k$, let $y=(y_k)$. We will first show that $y\in\mrm{FIN}(\Phi)$. The sequence $\{x_n\colon n\in\om\}$ is bounded, let
say $\Phi(x_n)\leq B$ for every $n$. We show that $\Phi(y)\leq 2B$, i.e. (by lsc of $\Phi$) $\Phi(P_M(y))\leq 2B$ for every $M\in\om$. Fix an $M>0$. If $n$ is large enough, say $n\geq n_0$, then $\Phi(P_{\{k\}}(y-x_n))\leq B/M$ for every $k<M$ and hence \[\Phi(P_M(y))\leq \Phi(P_M(y-x_n))+\Phi(P_M(x_n))\leq \sum_{k<M}\Phi(P_{\{k\}}(y-x_n))+\Phi(x_n)\leq 2B.\]
Now we will prove that $x_n\to y$. If not, then there are $\eps>0$ and $n_0<n_1<\dots<n_j<\dots$ such that $\Phi(x_{n_j}-y)>\eps$, that is, $\Phi(P_{M_j}(x_{n_j}-y))>\eps$ for some $M_j\in \om\setminus\{0\}$ for every $j$. Pick $j_0$ such that  $\Phi(x_{n_{j_0}}-x_n)< \eps/2$ for every $n\geq n_{j_0}$ and then pick $j_1>j_0$ such that $\Phi(P_{\{k\}}(x_{n_{j_1}}-y))\leq \eps/(2 M_{j_0})$ for every $k<M_{j_0}$. Then
\[
\eps<\Phi(P_{M_{j_0}}(x_{n_{j_0}}-y))\leq \Phi(P_{M_{j_0}}(x_{n_{j_0}}-x_{n_{j_1}}))+\sum_{k<M_{j_0}}\Phi(P_{\{k\}}(x_{n_{j_1}}-y))< \eps,\]
a contradiction.

\smallskip
$\mrm{EXH}(\Phi)=\overline{c_{00}}$: $c_{00}$ is dense in $\mrm{EXH}(\Phi)$ because $\Phi(x-P_n(x))=\Phi(P_{\om\setminus n}(x))\xrightarrow{n\to\infty}0$ for every $x\in\mrm{EXH}(\Phi)$. We have to show that $\mrm{EXH}(\Phi)$ is closed. Let $x\in\mrm{FIN}(\Phi)$ be an accumulation point of $\mrm{EXH}(\Phi)$. For any $\eps>0$ we can find  $y\in\mrm{EXH}(\Phi)$ such that $\Phi(x-y)<\eps$, and then $n_0$ such that $\Phi(P_{\om\setminus n}(y))<\eps$ for every $n\geq n_0$. If $n\geq n_0$ then $\Phi(P_{\om\setminus n}(x))\leq \Phi(P_{\om\setminus n}(x-y))+\Phi(P_{\om\setminus n}(y))< 2\eps$.

\smallskip
$(e_n)$ is a $1$-unconditional basis in $\mrm{EXH}(\Phi)$: This follows from monotonicity of $\Phi$ (see the characterizations of unconditional bases in Section \ref{analPsec}).

\smallskip
Suppose $(b_n)$ is a $1$-unconditional basis in a Banach space  $X$. In particular, for each $x\in X$ there is a unique $(x_n)\in \mbb{R}^\om$ such that  $x = \sum_{n\in\om} x_n b_n$, and hence we can assume that $X\subseteq\mbb{R}^\om$. For
$x=(x_n)\in\mbb{R}^\om$ define $\Phi(x)=\sup\{\|\sum_{k<n}x_kb_k\|\colon n\in\om\}$. Applying $1$-unconditionality of $(b_n)$, $\Phi$ is a nice norm, and of course $\Phi(x)=\|x\|$ for every $x\in X$. Now, $x\in\mrm{EXH}(\Phi)$ iff
$\Phi(P_{\om\setminus n}(x))\xrightarrow{n\to\infty} 0$ iff $\sum_{n\in\om} x_nb_n$ is Cauchy/convergent iff $x\in X$.
\end{proof}

\begin{rem} Notice that $\mrm{tail}_\Phi(x)=\mrm{tail}_\Phi(y)$ whenever $x-y\in\mrm{EXH}(\Phi)$, in particular, we can consider $\mrm{tail}_\Phi\colon \mrm{FIN}(\Phi)/\mrm{EXH}(\Phi)\to [0,\infty)$, and it is straightforward to check that this map is the canonical  quotient norm on $\mrm{FIN}(\Phi)/\mrm{EXH}(\Phi)$.
\end{rem}

It is not a coincidence that when introducing the notions of $\mrm{FIN}(\Phi)$ and $\mrm{EXH}(\Phi)$ we mimicked the notations from the theory of analytic P-ideals. Every nice extended norm $\Phi$ gives rise in a natural way to many non-pathological  submeasures and vice versa:

\begin{prop}\label{re-presentation} Let $\Phi$ be a nice extended norm and
$\sigma=(\sigma_n)\in\mbb{R}^\om$. Then the function $\varphi(A)=\Phi(P_A(\sigma))$
is a non-pathological submeasure. Conversely, every non-pathological submeasure is of this form for some nice $\Phi$ and $\sigma=\mbf{1} = (1,1,\dots)$. In particular, every non-pathological analytic P-ideal $\mc{I}$ is represented in $\mrm{EXH}(\Phi)$ for
some nice $\Phi$ by the unconditional basis $(e_n)$, i.e. $\mathcal{I}=\mc{I}^{\mrm{EXH}(\Phi)}_{(e_n)}$.
\end{prop}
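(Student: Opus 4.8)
The plan is to establish the three assertions in turn. All of the verifications are routine manipulations with the defining properties of nice extended norms and of non-pathological submeasures, except for one point: producing, for each finite $F$, a \emph{nonnegative} supporting functional of the monotone norm $\Phi$ on $\mrm{span}\{e_n\colon n\in F\}$ at the vector $P_F(\sigma)$. This is the heart of the non-pathologicality claim, and everything else is bookkeeping around the conventions ($0\cdot\infty$, signs, finite truncations, strict versus non-strict inequalities).

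\emph{The map $\varphi(A)=\Phi(P_A(\sigma))$ is a non-pathological submeasure.} First I would check, directly from niceness of $\Phi$, that $\varphi$ is an lsc submeasure: $\varphi(\0)=\Phi(0)=0$; monotonicity of $\varphi$ and $\varphi(X\cup Y)\le\varphi(X)+\varphi(Y)$ follow from monotonicity of $\Phi$ together with $P_{X\cup Y}(\sigma)=P_X(\sigma)+P_{Y\setminus X}(\sigma)$ and the triangle inequality; $\varphi(\{n\})=|\sigma_n|\,\Phi(e_n)<\infty$ since $e_n\in c_{00}$; and lsc of $\Phi$ applied to $P_X(\sigma)$ gives $\varphi(X\cap m)\to\varphi(X)$, whence $\varphi(X)=\sup\{\varphi(F)\colon F\in[X]^{<\om}\}$. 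As $\varphi$ depends only on $(|\sigma_n|)_n$, I may assume $\sigma\ge 0$ coordinatewise. For non-pathologicality I would, for each $F\in[\om]^{<\om}$, use that $\Phi$ restricted to $\mrm{span}\{e_n\colon n\in F\}$ is a genuine, absolute-value monotone norm; taking a supporting functional of it at $P_F(\sigma)$ and replacing its coefficients by their absolute values (which, by monotonicity, keeps it in the dual unit ball and does not decrease its value at the nonnegative vector $P_F(\sigma)$), I obtain reals $a_n\ge 0$ ($n\in F$) with $\sum_{n\in F}a_ny_n\le\Phi(y)$ on $\mrm{span}\{e_n\colon n\in F\}$ and $\sum_{n\in F}a_n\sigma_n=\varphi(F)$. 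Then $\mu_F(\{n\})=a_n\sigma_n$ defines a measure with $\mrm{supp}(\mu_F)\subseteq F$, $\mu_F(A)=\sum_{n\in A\cap F}a_n\sigma_n\le\Phi(P_{A\cap F}(\sigma))\le\varphi(A)$, and $\mu_F(F)=\varphi(F)$; using lsc of $\varphi$ one gets $\sup\{\mu_F\colon F\in[\om]^{<\om}\}=\varphi$, and $\sup_F\mu_F(\{n\})\le\varphi(\{n\})<\infty$, so $\varphi$ is non-pathological.

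\emph{Every non-pathological submeasure has this form with $\sigma=\mbf 1$.} I would write $\varphi=\varphi_{\wt\mu}$ for a family $\wt\mu=\{\mu_n\colon n\in\om\}$ normalized as in Section~\ref{reprsetting} (so $\bigcup_n\mrm{supp}(\mu_n)=\om$), and set
\[ \Phi(x)=\sup_{n\in\om}\sum_{k\in\om}|x_k|\,\mu_n(\{k\})\in[0,\infty]. \]
Then $\Phi$ is a nice extended norm: homogeneity and the triangle inequality are immediate; $\Phi(x)=0$ forces $x=0$ since each coordinate lies in some $\mrm{supp}(\mu_n)$; condition (a) holds because $\Phi(x)\le\sum_{k\in\mrm{supp}(x)}|x_k|\,\varphi(\{k\})<\infty$ for $x\in c_{00}$; (b) is clear; and (c) follows by interchanging $\sup_n$ and $\lim_m$, both of which are suprema of nondecreasing quantities. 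Finally $\Phi(P_A(\mbf 1))=\sup_n\sum_{k\in A}\mu_n(\{k\})=\sup_n\mu_n(A)=\varphi(A)$, as desired.

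\emph{The ``in particular''.} I would apply the converse to $\mc{I}=\mrm{Exh}(\varphi)$ with $\varphi$ non-pathological, obtaining a nice $\Phi$ with $\varphi(A)=\Phi(P_A(\mbf 1))$. Since the norm of the complete space $\mrm{EXH}(\Phi)$ is $\Phi$, $(e_n)$ is its $1$-unconditional basis, and $\sum_{n\in E}e_n=P_E(\mbf 1)$, the Cauchy criterion for unconditional convergence (equivalent formulation (a) recalled in the Banach-space preliminaries) gives $A\in\mc{I}^{\mrm{EXH}(\Phi)}_{(e_n)}$ iff $\forall\eps>0\;\exists N\;\forall E\in[A\setminus N]^{<\om}\;\Phi(P_E(\mbf 1))<\eps$. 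On the other hand, lsc and monotonicity of $\Phi$ give $\Phi(P_{A\setminus N}(\mbf 1))=\sup\{\Phi(P_E(\mbf 1))\colon E\in[A\setminus N]^{<\om}\}$, so $A\in\mrm{Exh}(\varphi)$, i.e.\ $\lim_N\varphi(A\setminus N)=0$, amounts to the same condition (with ``$\le\eps$'', which is equivalent). Hence $\mc{I}=\mc{I}^{\mrm{EXH}(\Phi)}_{(e_n)}$, completing the proof.
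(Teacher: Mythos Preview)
Your proof is correct. Parts 2 and 3 of your argument are essentially identical to the paper's: you define the same extended norm $\Phi_{\wt\mu}(x)=\sup_n\sum_k|x_k|\mu_n(\{k\})$ and verify $\mc{I}=\mc{I}^{\mrm{EXH}(\Phi)}_{(e_n)}$ via the same chain of equivalences $\varphi(A\setminus N)\to 0$ iff $P_A(\mbf{1})\in\mrm{EXH}(\Phi)$ iff $\sum_{i\in A}e_i$ is unconditionally convergent.

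The one genuine difference is in the non-pathologicality of $\varphi(A)=\Phi(P_A(\sigma))$. The paper does not construct supporting measures directly; instead it observes that, by monotonicity and lsc, $\varphi$ coincides with the submeasure $\psi^{\mrm{EXH}(\Phi)}_{(\sigma_ne_n)}(A)=\sup\{\|\sum_{n\in F}\eps_n\sigma_ne_n\|:F\in[A]^{<\om},\,\eps_n=\pm 1\}$, and then invokes the proof of Theorem~\ref{PAGBSrepr}, where non-pathologicality of $\psi^X_h$ for any Banach space $X$ was obtained by embedding $X$ into $\ell_\infty$ and reading off coordinate measures. Your route is more intrinsic: for each finite $F$ you take a supporting functional of the monotone norm $\Phi$ on $\mrm{span}\{e_n:n\in F\}$ at $P_F(\sigma)$, use monotonicity to make its coefficients nonnegative while keeping it in the dual unit ball, and thereby produce a measure $\mu_F\le\varphi$ with $\mu_F(F)=\varphi(F)$. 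This avoids the detour through $\ell_\infty$ and makes explicit that monotonicity of $\Phi$ (i.e.\ $1$-unconditionality of $(e_n)$) is precisely what allows the supporting functional to be chosen with nonnegative coefficients, which is the content of non-pathologicality. The paper's approach is terser and reuses earlier work; yours is self-contained and arguably more transparent.
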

\begin{proof}
Lower semicontinuity of $\Phi$ and $1$-unconditionality of $(e_n)$ imply that $\varphi$ is an lsc submeasure. To show that it is non-pathological, notice that $\varphi=\varphi^{\mrm{EXH}(\Phi)}_{(\sigma_ne_n)}=\psi^{\mrm{EXH}(\Phi)}_{(\sigma_ne_n)}$
(see the proof of Theorem \ref{PAGBSrepr}, in particular $\mrm{Exh}(\varphi)=\mc{I}^{\mrm{EXH}(\Phi)}_{(\sigma_ne_n)}$).

\smallskip
Now let $\wt{\mu}=\{\mu_n\colon n\in\om\}$ be a family of measures and $\varphi_{\wt{\mu}}(A)=\sup\{\mu_n(A)\colon n\in\om\}$ be the associated non-pathological submeasure. Define $\Phi_{\wt{\mu}}\colon \mbb{R}^\om\to [0,\infty]$ as follows:
\[ \Phi_{\wt{\mu}}(x)=\sup\bigg\{\sum_{k\in\mrm{supp}(\mu_n)}\mu_n(\{k\})|x_k|\colon n\in\om\bigg\}.\]
Then $\Phi_{\wt{\mu}}$ is a nice extended norm and  $\Phi_{\wt{\mu}}(P_A(\mbf{1}))=\sup\{\mu_n(A)\colon n\in\om\}=\varphi_{\wt{\mu}}(A)$, and hence $\mrm{Exh}(\varphi_{\wt{\mu}})=\mc{I}^{\mrm{EXH}(\Phi_{\wt{\mu}})}_{(e_n)}$. Let us point out that we do not need to use $\varphi_{\wt{\mu}}=\psi^{\mrm{EXH}(\Phi_{\wt{\mu}})}_{(e_n)}$ to see that $\mrm{Exh}(\varphi_{\wt{\mu}})=\mc{I}^{\mrm{EXH}(\Phi_{\wt{\mu}})}_{(e_n)}$. Indeed, $A\in \mrm{Exh}(\varphi_{\wt{\mu}})$ iff $\varphi_{\wt{\mu}}(A \setminus n)=\Phi_{\wt{\mu}}(P_{A\setminus n}(\mbf{1}))\to 0$ iff $P_A(\mbf{1})\in\mrm{EXH}(\Phi_{\wt{\mu}})$ iff $\sum_{i\in A}e_i$ is unconditionally convergent.
\end{proof}

Also, notice that $(\sigma_n)$ and $(|\sigma_n|)$ generate the same submeasure, and we can assume, without changing the ideal, that $\sigma_n>0$ for every $n$ (i.e. that $\sigma\in (0,\infty)^\om$ just like $\tau$ in the definition of $\wt{\mu}(\mc{F},\tau$)).

\medskip
The next result was motivated by \cite[Theorem 3.2]{Ding17} and, in principle, it gathers some known results (Ding \cite{Ding17}, Drewnowski and Labuda \cite{Drewnowski10}, Bessaga-Pe\l czy\'{n}ski \cite{Bessaga58}) and put them in the framework of the notions and notations we introduced. For example, notice that (1)$\leftrightarrow$(2) below can be seen as a natural counterpart of Theorem \ref{char} (3).

\begin{thm}\label{characterization} Let $\Phi$ be a nice extended  norm. Then the following are equivalent:
\begin{enumerate}
\item $\mrm{FIN}(\Phi) = \mrm{EXH}(\Phi)$.
\item $\mrm{EXH}(\Phi)$ is $F_\sigma$ in $\mbb{R}^\om$.
\item $\mrm{EXH}(\Phi)$ does not contain $c_0$, i.e. $(e_n)$ is a boundedly complete basis.
\item $\mrm{FIN}(\Phi)$ is separable.
\item $\mbb{R}^\om/\mrm{EXH}(\Phi)$ is Borel reducible to $\mbb{R}^\om/\ell_\infty$.
\item Only $F_\sigma$ ideals can be represented in $\mrm{EXH}(\Phi)$.
\item $\mathcal{Z}$ is not represented in $\mrm{EXH}(\Phi)$.
\item If $\sigma\in\mbb{R}^\om$ and $\varphi(A)=\Phi(P_A(\sigma))$ then  $\mrm{Exh}(\varphi)=\mrm{Fin}(\varphi)$.
\end{enumerate}
\end{thm}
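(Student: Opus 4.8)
The plan is to take $(1)$ as the hub, prove $(1)\Leftrightarrow(2)\Leftrightarrow(3)\Leftrightarrow(4)\Leftrightarrow(8)$ by direct arguments, attach $(6)$ and $(7)$ to $(3)$ via Theorem~\ref{DrLa}, and finally tie in $(5)$. The easy directions of the core block go as follows. $(1)\Rightarrow(2)$: $\mrm{FIN}(\Phi)=\bigcup_n\{x\colon\Phi(x)\le n\}$, and by lsc each $\{\Phi\le n\}=\bigcap_m\{x\colon\Phi(P_mx)\le n\}$ is closed in $\mbb{R}^\om$, since $x\mapsto\Phi(P_mx)$ depends continuously on finitely many coordinates. $(1)\Leftrightarrow(3)$: recalling (from the Banach-space preliminaries) that an unconditional basis is boundedly complete precisely when its closed span omits $c_0$, and that lsc gives $\sup_N\|\sum_{n<N}a_ne_n\|=\sup_N\Phi(P_Na)=\Phi(a)$, bounded completeness of $(e_n)$ in $\mrm{EXH}(\Phi)$ says exactly that every $a$ with $\Phi(a)<\infty$ has $\sum a_ne_n$ convergent, i.e.\ $\mrm{FIN}(\Phi)\subseteq\mrm{EXH}(\Phi)$. $(1)\Leftrightarrow(4)$: ``$\Rightarrow$'' is clear since $\mrm{EXH}(\Phi)=\overline{c_{00}}$ is separable; for the contrapositive of ``$\Leftarrow$'', given $x\in\mrm{FIN}(\Phi)\setminus\mrm{EXH}(\Phi)$ one uses lsc to carve out disjoint finite intervals $I_k$ with $\Phi(P_{I_k}x)\ge\eps$ for a fixed $\eps>0$, and then $\{P_{\bigcup_{k\in S}I_k}(x)\colon S\subseteq\om\}$ is a continuum-sized, $\eps$-separated (by monotonicity) subset of $\mrm{FIN}(\Phi)$. $(1)\Leftrightarrow(8)$: this is bookkeeping with tails, using $\mrm{tail}_\varphi(A)=\mrm{tail}_\Phi(P_A\sigma)$ and $\mrm{Fin}(\varphi)=\{A\colon P_A\sigma\in\mrm{FIN}(\Phi)\}$; thus $(1)$ yields $(8)$ for every $\sigma$ at once, and conversely applying $(8)$ with $\sigma=(|x_n|)_n$ and $A=\om$ turns $x\in\mrm{FIN}(\Phi)$ into $\mrm{tail}_\Phi(x)=\mrm{tail}_\Phi(|x|)=\mrm{tail}_\varphi(\om)=0$.

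The implication $(2)\Rightarrow(1)$ is the main internal step, a Baire-category argument mirroring the proof that an $F_\sigma$ P-ideal satisfies $\mrm{Exh}=\mrm{Fin}$. Write $\mrm{EXH}(\Phi)=\bigcup_nC_n$ with $C_n$ closed in $\mbb{R}^\om$; since the inclusion $(\mrm{EXH}(\Phi),\Phi)\hookrightarrow\mbb{R}^\om$ is continuous (as $|x_k|\le\Phi(e_k)^{-1}\Phi(x)$), each $C_n$ is also closed in the separable Banach space $(\mrm{EXH}(\Phi),\Phi)$, and Baire provides an $n_0$ for which $C_{n_0}$ contains a $\Phi$-ball around some $z\in\mrm{EXH}(\Phi)$ of some radius $\de>0$. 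If now $x_0\in\mrm{FIN}(\Phi)\setminus\mrm{EXH}(\Phi)$, pick $c>0$ with $c\,\Phi(x_0)<\de$; then $v_m:=c\,P_{[0,m)}(x_0)\in c_{00}\subseteq\mrm{EXH}(\Phi)$ has $\Phi(v_m)<\de$, so $z+v_m\in C_{n_0}$, and since $z+v_m\to z+cx_0$ coordinatewise while $C_{n_0}$ is $\mbb{R}^\om$-closed, $z+cx_0\in\mrm{EXH}(\Phi)$, whence $cx_0\in\mrm{EXH}(\Phi)$ -- contradicting $x_0\notin\mrm{EXH}(\Phi)$.

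For the cluster around $(3)$: Theorem~\ref{DrLa}, which says ``$X$ omits a copy of $c_0$ iff every $\mc{I}^X_h$ is $F_\sigma$'', applied to $X=\mrm{EXH}(\Phi)$ is literally $(3)\Leftrightarrow(6)$; $(6)\Rightarrow(7)$ holds because $\mc{Z}$ is not $F_\sigma$; and $\neg(3)\Rightarrow\neg(7)$ since $\mc{Z}$ is representable in $c_0$ and that representation transports along an isomorphic embedding of $c_0$ into $\mrm{EXH}(\Phi)$. For $(1)\Rightarrow(5)$: use finite-dimensional Hahn--Banach to pick, for each $m$, a countable family of linear functionals supported on $\{0,\dots,m-1\}$, each dominated by $\Phi\circ P_m$, whose pointwise supremum is $\Phi\circ P_m$; re-enumerating all of them as $(\mu_j)_j$ gives $\mbb{R}^\om$-continuous linear functionals with $\Phi(z)=\sup_j|\mu_j(z)|$ for every $z$ (by lsc), so the continuous linear map $f(x)=(\mu_j(x))_j$ satisfies $f(x)-f(y)\in\ell_\infty\Leftrightarrow\Phi(x-y)<\infty\Leftrightarrow x-y\in\mrm{FIN}(\Phi)=\mrm{EXH}(\Phi)$ and is the desired Borel reduction of $\mbb{R}^\om/\mrm{EXH}(\Phi)$ to $\mbb{R}^\om/\ell_\infty$.

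The remaining implication $(5)\Rightarrow(2)$ is the one I expect to be the main obstacle, and the only step needing input from outside the material developed here. Through $(2)\Leftrightarrow(7)$, failure of $(2)$ means $\mc{Z}$ is representable in $\mrm{EXH}(\Phi)$, and one wants to convert this into an honest Borel reduction of $E_{\mc{Z}}$ (equivalently of $\mbb{R}^\om/c_0$) into $\mbb{R}^\om/\mrm{EXH}(\Phi)$; the delicate point is that $\sum_{n\in A}h(n)$ need not converge coordinatewise in $\mbb{R}^\om$, so the reduction cannot simply be ``$A\mapsto\sum_{n\in A}h(n)$'' and must be arranged more carefully (or one quotes that representability automatically yields such a reduction). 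Granting the classical non-reducibility $E_{\mc{Z}}\not\leq_B\mbb{R}^\om/\ell_\infty$, this contradicts $(5)$. The cleanest packaging of this direction is to invoke the known characterization that the coset equivalence relation of a Polishable Borel subgroup of $\mbb{R}^\om$ is Borel reducible to $\mbb{R}^\om/\ell_\infty$ exactly when the subgroup is $F_\sigma$, which settles $(2)\Leftrightarrow(5)$ in one stroke.
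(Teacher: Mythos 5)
Your proposal is correct, and for items (1)--(4) and (6)--(8) it follows the same route as the paper: the same Baire-category argument for (2)$\Rightarrow$(1) (closedness in the product topology passes to the $\Phi$-topology, a ball inside one piece, coordinatewise limits of truncations), the same $\varepsilon$-separated continuum-sized family for (4)$\Rightarrow$(1), the identification of (3) with bounded completeness, the tail bookkeeping for (8), and the attachment of (6),(7) to (3) via Theorem \ref{DrLa} together with the facts that $\mathcal{Z}$ is representable in $c_0$ but not $F_\sigma$. The one genuine divergence is item (5). The paper cites \cite[Theorem 3.2]{Ding17} for the whole equivalence (1)$\leftrightarrow$(5), whereas you give a self-contained proof of (1)$\Rightarrow$(5): writing $\Phi$ as the supremum of countably many finitely supported linear functionals (finite-dimensional Hahn--Banach on each $P_m$, then lsc) and using these as coordinates of a continuous reduction to $\mbb{R}^\om/\ell_\infty$ -- this is essentially the non-pathological representation of $\Phi$ and is a nice elementary argument the paper does not spell out. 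For the converse direction (5)$\Rightarrow$(1)/(2) you are candid that you must either quote the non-reducibility of $\mbb{R}^\om/c_0$ (equivalently $E_{\mc{Z}}$) to $\mbb{R}^\om/\ell_\infty$, or the full characterization of $F_\sigma$-ness by reducibility to $\ell_\infty$; this is exactly the external input the paper outsources to Ding, so your treatment is on par with the paper's, just more explicit about where the nontrivial descriptive-set-theoretic content lies. (Your sketched alternative via a block copy of $c_0$ inside $\mrm{EXH}(\Phi)$ would indeed work once the supports are disjointified, but it still needs the same non-reducibility fact, so nothing is gained over quoting the characterization.)
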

\begin{proof}
(1)$\to$(2): We show that $\{x\in\mbb{R}^\om\colon \Phi(x) \leq M\}$ is closed, and hence $\mrm{FIN}(\Phi)$ is $F_\sigma$ in $\mbb{R}^\om$. Indeed, assume that $\Phi(x)>M+\eps$ for some $\eps>0$. By lsc, there is $N\in\om$ such that $\Phi(P_N(x))>M+\eps$. As
	$\Phi$ is a norm on $c_{00}$, we can find $\delta>0$ such that $\Phi(P_N(y))<\eps$ whenever $|y(n)|<\delta$ for each $n\in N$. For such $y$, $\Phi(x+y)\geq \Phi(P_N(x+y))>M$. In other words, $\{x\in \mbb{R}^\om\colon \Phi(x)>M\}$ is open. (Similarly, $\mrm{EXH}(\Phi)$ is $F_{\sigma\delta}$.)

\smallskip
(2)$\to$(1): Let $\mrm{EXH}(\Phi) = \bigcup_{n\in\om} F_n$, where each $F_n$ is closed in the product topology. As norms on finite dimensional linear space are equivalent, the restriction of every basic open subset of $\mbb{R}^\om$ is open in
$(\mrm{EXH}(\Phi),\Phi)$ as well, and hence $F_n$ is closed in $(\mrm{EXH}(\Phi), \Phi)$ for every $n$. Applying the Baire Category Theorem, there are $n \in\omega$, $c\in\mrm{EXH}(\Phi)$, and $\varepsilon>0$ such that $B_c(\varepsilon) = \{x\in
	\mrm{EXH}(\Phi)\colon
\Phi(x-c)<\varepsilon\} \subseteq F_n$. Now if $y\in \mrm{FIN}(\Phi)\setminus\{0\}$, then $P_k(y)\in \mrm{EXH}(\Phi)$ and $\frac{\eps}{2\Phi(P_k(y))}P_k(y) +c \in B_c(\varepsilon) \subseteq F_n$ for every large enough $k$. Clearly, $P_k(y)\to y$ in the product topology, hence $\frac{\eps}{2\Phi(y)}y +c\in F_n\subseteq\mrm{EXH}(\Phi)$, and so $y\in \mrm{EXH}(\Phi)$.

\smallskip
(3)$\leftrightarrow$(1): $\mrm{FIN}(\Phi)=\mrm{EXH}(\Phi)$ says exactly that $(e_n)$ is boundedly complete.

\smallskip
(1)$\to$(4): Trivial.

\smallskip
(4)$\to$(1): If $x\in\mrm{FIN}(\Phi)\setminus\mrm{EXH}(\Phi)$, $\Phi(x)\geq \inf\{\Phi(P_{\om\setminus n}(x))\colon n\in\om\}>\eps>0$, then there are pairwise disjoint finite sets $F_n\subseteq\om$ such that $\Phi(P_{F_n}(x))>\eps$ for every $n$. For $A\subseteq\om$ let $F_A=\bigcup_{n\in A}F_n$ and $y_A=P_{F_A}(x)\in \mrm{FIN}(\Phi)$. If $A,B\subseteq\om$ and $n\in A\triangle B$ then $\Phi(y_A-y_B)\geq \Phi(P_{F_n}(x))>\eps$, and therefore no set of size less than continuum can be dense in $\mrm{FIN}(\Phi)$.

\smallskip
(1)$\leftrightarrow$(5): See \cite[Theorem 3.2]{Ding17}.

\smallskip
(3)$\leftrightarrow$(6): See Theorem \ref{DrLa}.

\smallskip
(6)$\to$(7): $\mathcal{Z}$ is not $F_\sigma$.

\smallskip
(7)$\to$(3): $\mathcal{Z}$ is representable in $c_0$.

\smallskip
(8)$\leftrightarrow$(1): If there is a $\sigma\in \mrm{FIN}(\Phi) \setminus \mrm{EXH}(\Phi)$ and $\varphi(A) = \Phi(P_A(\sigma))$, then $\varphi(\omega)=\Phi(\sigma)<\infty$ and so $\mrm{Fin}(\varphi)=\mathcal{P}(\omega)$ but $\omega\notin \mrm{Exh}(\varphi)$. Conversely, if $\sigma$ and $\varphi$ are as in (8) and $A\in \mrm{Fin}(\varphi) \setminus \mrm{Exh}(\varphi)$, then $P_A(\sigma) \in \mrm{FIN}(\Phi) \setminus \mrm{EXH}(\Phi)$.
\end{proof}

The above theorem raises several natural questions. One of them is whether $\mrm{EXH}(\Phi)$ not containing $\ell_1$ can be characterized in a way similar to the theorem above. Another is the question about the general relationship between $\mrm{EXH}(\Phi)$
and $\mrm{FIN}(\Phi)$. The following proposition partially solves both of these questions.

\begin{prop}\label{shrink} Let $\Phi$ be a nice extended norm. Then $\mrm{EXH}(\Phi)$ does not contain $\ell_1$ (i.e. $(e_n)$ is a shrinking basis) iff
$\mrm{FIN}(\Phi)$ is isomorphic to $\mrm{EXH}(\Phi)^{**}$.
\end{prop}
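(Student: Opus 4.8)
The plan is to identify $\mrm{EXH}(\Phi)^{**}$ concretely, and show it is (isometrically, if we are careful) $\mrm{FIN}(\Phi)$ exactly when the basis is shrinking. Recall from Section~\ref{main} that $(e_n)$ is a $1$-unconditional basis of $E:=\mrm{EXH}(\Phi)$, and write $(e_n^*)$ for the coordinate functionals. The key structural fact I would invoke is the classical one recalled in Section~\ref{main}: $(e_n)$ is shrinking iff $(e_n^*)$ is a basis of $E^*$, and in general $(e_n^*)$ is boundedly complete in $Y:=[(e_n^*)]\subseteq E^*$, and the natural embedding $E\hookrightarrow Y^*$ is an isometric isomorphism (this is one of the equivalent forms of ``boundedly complete'' stated in the excerpt, applied to $(e_n^*)$). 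When $(e_n)$ is shrinking, $Y=E^*$, so $E^{**}=Y^*$.

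So the heart of the argument is to show $Y^*\cong\mrm{FIN}(\Phi)$ in the shrinking case. First I would note that $e_n^*$, as a functional on $E$, is given by $e_n^*(x)=x_n$, and its norm in $E^*$ is $\Phi(e_n)^{-1}$ — in particular $(e_n^*)$ is itself $1$-unconditional in $Y$. Hence, by the very Proposition on $\mrm{EXH}$ proved earlier, $Y$ is isometrically $\mrm{EXH}(\Psi)$ for the nice extended norm $\Psi$ dual to $\Phi$, defined on $\mbb{R}^\om$ by $\Psi(y)=\sup\{\sum_{n\in F}y_nx_n\colon F\in[\om]^{<\om},\ \Phi(P_F(x))\leq 1\}$ (the dual norm computed coordinatewise; niceness of $\Psi$ follows from monotonicity and lsc of $\Phi$ together with the finite-dimensional duality, and $\Psi$ is finite on $c_{00}$ because each $e_n$ has positive $\Phi$-norm). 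Then, by the bidual description, $Y^*$ identifies with $\mrm{FIN}(\Psi^*)$ where $\Psi^*$ is the dual of $\Psi$ — and the point is that $\Psi^*=\Phi$ on $\mbb{R}^\om$. This last equality is a coordinatewise biduality statement: for each finite $F$ the restriction of $\Phi$ to the coordinates in $F$ is a genuine norm on a finite-dimensional space, so it equals its own double dual there, and then lsc of $\Phi$ (condition~(c)) lets one pass to the sup over finite $F$, giving $\Psi^*(x)=\sup_F\Phi(P_F(x))=\Phi(x)$ for all $x\in\mbb{R}^\om$. Thus $E^{**}=Y^*=\mrm{FIN}(\Phi)$, and tracing through the identifications this is the canonical embedding $\mrm{EXH}(\Phi)\hookrightarrow\mrm{EXH}(\Phi)^{**}$ landing densely, with the bidual norm restricting to $\Phi$; so $\mrm{FIN}(\Phi)\cong\mrm{EXH}(\Phi)^{**}$ isometrically.

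For the converse, suppose $\mrm{FIN}(\Phi)\cong\mrm{EXH}(\Phi)^{**}$ via the canonical embedding. If $(e_n)$ were not shrinking, then by the characterization in Section~\ref{main} $E$ contains a copy of $\ell_1$, hence $E^*$ contains a copy of $L_1$ and $E^{**}$ is nonseparable; but then $\mrm{FIN}(\Phi)$ would be nonseparable, so by Theorem~\ref{characterization} (the equivalence (1)$\leftrightarrow$(4)) we would get $\mrm{FIN}(\Phi)=\mrm{EXH}(\Phi)$, i.e.\ $E$ is reflexive, contradicting that $E$ contains $\ell_1$ (which is not reflexive) — or, more directly, $E$ reflexive forces $(e_n)$ shrinking, a contradiction. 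Alternatively and more cleanly: $\mrm{FIN}(\Phi)=E^{**}$ under the canonical map in particular means $E$ is weak$^*$-dense in $E^{**}$ with the subspace structure coming from $\mbb{R}^\om$, and one checks the canonical map $E\to Y^*$ being \emph{onto} $\mrm{FIN}(\Phi)$ forces $Y=E^*$, i.e.\ $(e_n^*)$ spans $E^*$, which is exactly shrinkingness.

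The main obstacle I expect is the careful bookkeeping in the first direction: verifying that the ``coordinatewise dual'' $\Psi$ really is a \emph{nice} extended norm (the lsc condition~(c) for $\Psi$ is the delicate point, since duals of lsc functionals need not obviously be lsc — here one must use that $\Phi$ is finite on $c_{00}$ so the unit ball of $E$ restricted to finitely many coordinates is compact, making the sup defining $\Psi$ an attained maximum over a nested compact family), and then that $\Psi^*=\Phi$ pointwise rather than merely on a dense set. Once the duality bookkeeping $\Phi\leftrightarrow\Psi$ is set up correctly, everything else is an application of the already-proved Proposition on $\mrm{EXH}$ and the classical shrinking/boundedly-complete dictionary quoted in the excerpt.
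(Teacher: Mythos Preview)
Your converse direction contains a genuine error. You argue: $E$ contains $\ell_1$ $\Rightarrow$ $E^{**}$ is nonseparable $\Rightarrow$ $\mrm{FIN}(\Phi)$ is nonseparable $\Rightarrow$ (by Theorem~\ref{characterization}, (1)$\leftrightarrow$(4)) $\mrm{FIN}(\Phi)=\mrm{EXH}(\Phi)$. But that equivalence reads the other way: $\mrm{FIN}(\Phi)$ is separable \emph{iff} $\mrm{FIN}(\Phi)=\mrm{EXH}(\Phi)$, so nonseparability gives $\mrm{FIN}(\Phi)\neq\mrm{EXH}(\Phi)$, and no contradiction follows. More fundamentally, ``nonseparable'' is too weak an obstruction, since $\mrm{FIN}(\Phi)$ is nonseparable whenever it differs from $\mrm{EXH}(\Phi)$. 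The paper's argument uses cardinality instead: $\mrm{FIN}(\Phi)\subseteq\mbb{R}^\om$ has size at most $\mathfrak{c}$, whereas by the Odell--Rosenthal theorem a separable space containing $\ell_1$ has bidual of size $>\mathfrak{c}$; this blocks \emph{any} isomorphism, canonical or not. Your ``alternative'' converse adds the hypothesis that the isomorphism is the canonical one, which is not part of the statement.

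Your forward direction is in spirit what the paper does---the paper simply cites the classical fact that for a shrinking basis $(e_n)$ in $X$, the bidual $X^{**}$ is the space of coefficient sequences $(\alpha_n)$ with $\sup_N\|\sum_{k<N}\alpha_k e_k\|<\infty$, and observes that this sup is exactly $\Phi$---but your write-up has errors. The claim that ``the natural embedding $E\hookrightarrow Y^*$ is an isometric isomorphism'' is false: it is only an embedding, surjective precisely when $(e_n)$ is \emph{boundedly complete}, not when $(e_n^*)$ is (take $E=c_0$, $Y=\ell_1$, $Y^*=\ell_\infty$). And the step ``by the bidual description, $Y^*$ identifies with $\mrm{FIN}(\Psi^*)$'' is precisely what needs proving; you have not established any description of $Y^*$ at that point. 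The finite-dimensional biduality $\Psi^*=\Phi$ is fine, but promoting it to an identification of $Y^*$ with $\mrm{FIN}(\Phi)$ is the content of the classical theorem, not bookkeeping.
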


\begin{proof}
We know (see e.g. \cite[Proposition 3.3.6]{Schlumprecht}) that is $(e_n)$ is a shrinking basis in $X$ then $X^{**}$ is isomorphic to the space $\{\al=(\alpha_n)\in\mbb{R}^\om:
\Psi(\al)\colon =\sup\{\| \sum_{k<n} \alpha_k e_k \|:n\in\om\}<\infty\}=\mrm{FIN}(\Psi)$, and notice that $\Psi=\Phi$ in the case of $X=\mrm{EXH}(\Phi)$.

\smallskip
Conversely, if $\mrm{FIN}(\Phi)$ is isomorphic to $\mrm{EXH}(\Phi)^{**}$, then $|\mrm{EXH}(\Phi)^{**}|\leq \mathfrak{c}$, and by \cite[Main theorem]{Odell75}, $\mrm{EXH}(\Phi)$ does not contain $\ell_1$.
\end{proof}

It is however unclear for us if $\mrm{EXH}(\Phi)$ not containing $\ell_1$ could be characterized in terms of ideals represented in $\mrm{EXH}(\Phi)$ like in the case of $c_0$ in Theorem \ref{characterization}. Although for nice norms induced by
families of finite sets (see below) we have such characterization: $\mrm{EXH}(\Phi)$ does not contain a copy of $\ell_1$ if and only if no nontrivial $F_\sigma$ ideal can be represented in $\mrm{EXH}(\Phi)$ by its unconditional basis, see Theorem
\ref{compactly-supported}.

\medskip
In the rest of this section, we will present numerous examples of nice extended norms together with Banach spaces and analytic P-ideals induced by them. In fact all these examples will be of a very special form, generated by families of finite
subsets of $\omega$ (see also remarks after Theorem \ref{PAGBSrepr}). The
study of these families and the norms they induce have become an important section of Banach space theory in the past decades (see e.g. \cite{Argyros92}, \cite{Castillo93}, \cite{Jordi08},  \cite{Jordi15}, and \cite{Jordi-Stevo}).

Let $\mathcal{F} \subseteq [\om]^{<\om}$ be a cover of $\om$ containing all singletons. Define an extended norm $\Phi_\mathcal{F}$ by
\[ \Phi_\mathcal{F}(x) = \sup\bigg\{ \sum_{i\in F} |x_i|\colon F\in \mathcal{F}\bigg\}. \]
It is plain to check that it is a nice norm. Thus, $\mrm{EXH}(\Phi_\mathcal{F})$ and $\mrm{FIN}(\Phi_\mathcal{F})$ are Banach spaces. From now on, we will write $X_\mathcal{F} = \mrm{EXH}(\Phi_\mathcal{F})$.  Notice that we can always assume that $\mc{F}$ is hereditary (i.e. closed for taking subsets) because the hereditary closure $\mc{F}^\downarrow$ of $\mc{F}$ generates the same extended norm.

Now fix a sequence $\tau\in (0,\infty)^\om$ and consider a submeasure $\varphi_{\mathcal{F},\tau}$ defined from $\Phi_\mathcal{F}$ as before, i.e.
\[ \varphi_{\mathcal{F},\tau}(A) = \Phi_\mathcal{F}(P_A(\tau)). \]
Ideals of the form $\mathcal{I}_{\mathcal{F},\tau}$ will be called {\em $\mathcal{F}$-ideals}. Notice that trivial modifications of $\mrm{Fin}$ are $\mc{F}$-ideals for every $\mc{F}$. When we wish to exclude them we will talk about {\em
non-trivial} $\mc{F}$-ideals.

When studying classical examples, we will typically use one particular weight sequence. Let $\lambda =(\lambda_k)_{k\geq 1}$, $\lambda_k = 2^{-n}$ if $k\in [2^n, 2^{n+1})$, that is,
\[\lambda = \big(1,1/2,1/2,\underbrace{1/4,\cdots,1/4}_{4\;\text{times}},\underbrace{1/8,\cdots, 1/8}_{8\;\text{times}},\cdots\big).\]
Depending on the underlying set, we will modify $\lambda$ accordingly, on $\om\setminus n$ we consider the appropriate restriction of $\lambda$, and in the case of $2^{<\om}$ we consider $(\lambda_t)_{t\in 2^{<\om}}$, $\lambda_t=2^{-|t|}$; these
variants of $\lambda$ will also be denoted by $\lambda$.

\begin{exa} If $\mathcal{F} =[\om]^{\leq 1}$ then $\mrm{FIN}(\Phi_\mc{F})= \ell_\infty$, $\mrm{EXH}(\Phi_\mc{F})=c_0$, $\mc{I}_{\mc{F},\tau}=\{A\subseteq\om\colon A$ is finite or $(\tau_n)_{n\in A}\to 0\}$, and hence, by Fact \ref{nowheretall}, up to isomorphism the only nontrivial $\mc{F}$-ideal is $\{\0\}\otimes\mrm{Fin}$.
\end{exa}

\begin{exa} If $\mc{F}=[\om\setminus\{0\}]^{<\om}$ then
$\mrm{FIN}(\Phi_\mc{F})=\mrm{EXH}(\Phi_\mc{F})=\ell_1$ and, as $\mc{I}_{\mc{F},\tau}=\mc{I}_\tau$ (the summable ideal generated by $\tau$), the family of $\mc{F}$-ideals coincides with the family of all summable ideals. Notice that there are many
other families such that $\mc{I}_{1/n}$ is of the form $\mc{I}_{\mc{E},\tau}$, for example consider $\mathcal{E} = \{E\in [\omega\setminus\{0\}]^{<\omega}\colon  \sum_{n\in E}\lambda_n\leq 1\}$, then $\mc{I}_{1/n}=\mc{I}_{\mc{E},\lambda}$. At the same time,
$\mrm{EXH}(\Phi_\mathcal{E})$ contains a copy of $c_0$, e.g. because $\lambda$ witnesses that $\mrm{FIN}(\Phi_\mathcal{E})\ne\mrm{EXH}(\Phi_\mathcal{E})$.
\end{exa}

\begin{exa}\label{denszero} If $\mc{P}=\bigcup\{\mc{P}([2^n,2^{n+1}))\colon n\in\om\}$ then $\mrm{FIN}(\Phi_\mc{P})$ is isometrically isomorphic to the $\ell_\infty$-product of $(\ell_1(2^n)\colon n\in\om)$, similarly $\mrm{EXH}(\Phi_\mc{P})$ is isometrically isomorphic to $c_0$-product of this sequence. All $\mc{P}$-ideals are density ideals including the density zero ideal $\mc{Z}=\mc{I}_{\mc{P},\lam}$.
\end{exa}

\begin{exa} \label{trace-example} Let $\mc{A}=\{$finite antichains in $2^{<\om}\}$. Then $\mc{I}_{\mc{A},\lam}=\mrm{tr}(\mc{N})$ (see the discussion on non-pathological ideals after Theorem \ref{PAGBSrepr}). $\mrm{EXH}(\Phi_\mathcal{A})$ contains both copies of $c_0$ and $\ell_1$. Actually these copies are visible
	to the naked eye: copies of $c_0$ live on branches and copies of $\ell_1$ live on antichains, more precisely, $\{e_{f\clrest n}\colon n\in \omega\}$ is equivalent to the standard basis of $c_0$ for each $f\in 2^\omega$ and $\{e_s\colon s\in A\}$ is equivalent  to the
standard basis of $\ell_1$ for each infinite antichain $A\subseteq 2^{<\om}$.
	
Also, the norm $\Phi_\mathcal{A}$ is in fact a well-known norm on a certain subspace of $\mrm{FIN}(\Phi_\mathcal{A})$. Let $\mathcal{M}(2^\omega)$ be the space of signed Radon measures of bounded variation on $2^\omega$ equipped with the variation-norm, that is,
\begin{multline*}
	\|\mu\| =\mu^+(2^\om)+\mu^-(2^\om)=\\ \sup\big\{|\mu(C_0)| + |\mu(C_1)|\colon 2^\om=C_0\cup C_1\;\text{is a decomposition into clopen sets}\big\}.
\end{multline*}

We claim that $\mathcal{M}(2^\omega)$ can be isometrically embedded in $\mrm{FIN}(\Phi_\mathcal{A})$: For every $\mu\in \mathcal{M}(2^\omega)$ let $x_\mu\colon 2^{<\omega}\to\mbb{R}$, $x_\mu(t) = \mu([t])$ where $[t]=\{f\in 2^\om\colon t\subseteq f\}$ is the basic clopen set generated by $t$. Then the function $\mu\mapsto x_\mu$ is linear and injective (because $\mu$ is uniquely determined by its values on basic clopen sets). We show that $\|\mu\|=\Phi_\mc{A}(x_\mu)$, and hence this is an isometric embedding $\mc{M}(2^\om)\to\mrm{FIN}(\Phi_\mc{A})$.

Let $\mu\in \mathcal{M}(2^\omega)$, $\varepsilon>0$, and $C_0$, $C_1$ be disjoint clopen sets such that $2^\om=C_0\cup C_1$ and $|\mu(C_0)|+|\mu(C_1)|>\|\mu\|-\eps$. There are $A_0,A_1\in\mc{A}$ such that $C_i=\bigcup_{t\in A_i} [t]$, notice that
$A_0$ and $A_1$ must be disjoint and $A:=A_0\cup A_1$ is also an antichain.  Now, by definition $\Phi_\mathcal{A}(x_\mu)\geq \sum_{t\in A}|x_\mu(t)|=\sum_{t\in A_0}|\mu([t])|+\sum_{t\in A_1}|\mu([t])|\geq |\mu(C_0)| + |\mu(C_1)| > \|\mu\|-\varepsilon$. The same argument works in the other direction as well.

We do not know if $\mrm{EXH}(\Phi_\mc{A})$ (or $\mrm{FIN}(\Phi_\mc{A})$) is isomorphic to a known Banach space.
\end{exa}

\section{Spaces and ideals generated by compact families}\label{cptsupp}

In the rest of the paper we will mostly discuss properties of $X_\mc{F}$ and $\mc{F}$-ideals. First of all, in this section, we will focus on precompact families of finite sets. We say that an $\mc{F}\subseteq [\omega]^{<\omega}$ is {\em precompact} if
$\overline{\mc{F}}\subseteq [\omega]^{<\omega}$, or equivalently, if every sequence $(F_n)$ in $\mc{F}$ has a subsequence which forms a $\Delta$-system (for detailed studies on precompact families, see e.g. \cite{Jordi15} and \cite{Jordi-Stevo}). The name is motivated by the fact that an $\mc{F}\subseteq [\omega]^{<\omega}$ is compact iff every sequence $(F_n)$
in $\mc{F}$ has a subsequence which forms a $\Delta$-system with root from $\mc{F}$. We will leave the following simple fact without proof.

\begin{fact}
	If $\mc{H}\subseteq\mc{P}(\om)$, then $\overline{\mc{H}}^\downarrow=\overline{\mc{H}^\downarrow}$. In particular, (a) if $\mc{H}$ is closed then so is $\mc{H}^\downarrow$; and (b) if $\mc{F}\subseteq [\omega]^{<\omega}$ then
	$\overline{\mc{F}}\subseteq [\omega]^{<\omega}$ iff $\mc{F}^\downarrow$ does not contain infinite chains.
\end{fact}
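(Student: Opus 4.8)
The plan is to prove the displayed equality $\overline{\mc H}^\downarrow=\overline{\mc H^\downarrow}$ by two inclusions and then extract (a) and (b) from it. Throughout I would use that $\mc P(\om)$, identified with $2^\om$, is compact and metrizable, so topological closure coincides with sequential closure, and that $C_n\to C$ in $2^\om$ means exactly that for every $m\in\om$ one has $m\in C_n$ eventually when $m\in C$ and $m\notin C_n$ eventually when $m\notin C$; equivalently, $C_n$ agrees with $C$ on every finite initial segment eventually.

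First I would handle $\overline{\mc H}^\downarrow\subseteq\overline{\mc H^\downarrow}$. Given $A\subseteq B$ with $B\in\overline{\mc H}$, fix $B_n\in\mc H$ with $B_n\to B$ and set $A_n:=A\cap B_n$. Then $A_n\subseteq B_n\in\mc H$, so $A_n\in\mc H^\downarrow$, and a one-line check gives $A_n\to A$: once $B_n$ agrees with $B$ on a finite initial segment $I$, we get $A_n\cap I=A\cap B_n\cap I=A\cap B\cap I=A\cap I$ since $A\subseteq B$. Hence $A\in\overline{\mc H^\downarrow}$.

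Next, $\overline{\mc H^\downarrow}\subseteq\overline{\mc H}^\downarrow$. Given $A_n\in\mc H^\downarrow$ with $A_n\to A$, pick $B_n\in\mc H$ with $A_n\subseteq B_n$; by compactness of $2^\om$ pass to a subsequence along which $B_n$ converges, say to $B\in\overline{\mc H}$, noting the corresponding subsequence of $(A_n)$ still tends to $A$. It then remains to verify $A\subseteq B$, which is immediate: if $m\in A$ then $m\in A_n$ for all large $n$ along the subsequence, hence $m\in B_n$ for those $n$, hence $m\in B$. Thus $A\in\overline{\mc H}^\downarrow$ and the equality is proved.

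Part (a) follows at once: if $\mc H$ is closed then $\mc H^\downarrow=\overline{\mc H}^\downarrow=\overline{\mc H^\downarrow}$. For (b), note that $\mc F\subseteq[\om]^{<\om}$ forces $\mc F^\downarrow\subseteq[\om]^{<\om}$, and that $\overline{\mc F}\subseteq[\om]^{<\om}$ just says $\overline{\mc F}$ has no infinite element. If $\mc F^\downarrow$ has an infinite chain, then since cardinality is a strictly increasing injection on a chain of finite sets it contains an increasing chain $C_0\subsetneq C_1\subsetneq\cdots$ whose union $C$ is infinite and is the limit of $(C_k)$; so $C\in\overline{\mc F^\downarrow}=\overline{\mc F}^\downarrow$, yielding an infinite $B\in\overline{\mc F}$ with $C\subseteq B$. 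Conversely, if $B\in\overline{\mc F}$ is infinite, then $B\in\overline{\mc F}^\downarrow=\overline{\mc F^\downarrow}$, so some $A_n\in\mc F^\downarrow$ tend to $B$; enumerating $B=\{b_0<b_1<\cdots\}$ and choosing, for each $k$, an index $n_k$ with $A_{n_k}$ agreeing with $B$ on $\{0,\dots,b_k\}$, we get $\{b_0,\dots,b_k\}\subseteq A_{n_k}\in\mc F^\downarrow$, hence $\{b_0,\dots,b_k\}\in\mc F^\downarrow$ by downward closedness, and these form an infinite chain. I expect the only points needing care to be the use of compactness in the reverse inclusion (to produce the limit $B$ with $A\subseteq B$) and the combinatorial observation that any infinite chain of finite sets contains a strictly increasing $\omega$-subchain with infinite union, which is the bridge turning ``chains in $\mc F^\downarrow$'' into ``infinite points of $\overline{\mc F}$'' in (b).
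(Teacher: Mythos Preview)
Your proof is correct. The paper explicitly states ``We will leave the following simple fact without proof,'' so there is nothing to compare against; your argument supplies exactly the routine details one would expect---sequential compactness of $2^\om$ for the inclusion $\overline{\mc H^\downarrow}\subseteq\overline{\mc H}^\downarrow$, and the observation that an infinite chain of finite sets is order-isomorphic to $\om$ via cardinality for part (b).

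One small sharpening you could make: in (b), the reverse direction does not need the main equality at all. If $B\in\overline{\mc F}$ is infinite, pick $F_n\in\mc F$ with $F_n\to B$ directly; then for each $k$ some $F_n$ agrees with $B$ on $\{0,\dots,b_k\}$, so $\{b_0,\dots,b_k\}\subseteq F_n\in\mc F$ and hence $\{b_0,\dots,b_k\}\in\mc F^\downarrow$. Your route through $\overline{\mc F^\downarrow}$ is of course also fine. Similarly, in the forward direction of (b) you can observe that an infinite chain of finite sets is already (not merely contains) a strictly $\subsetneq$-increasing $\om$-sequence, since cardinality gives an order-isomorphism onto a subset of $\om$.
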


Notice that if $\mc{F}\subseteq[\om]^{<\om}$, then $\mc{F}$, $\mc{F}^\downarrow$,  $\overline{\mc{F}}\cap [\om]^{<\om}$ and $\overline{\mc{F}}^\downarrow\cap[\om]^{<\om} = \overline{\mc{F}^\downarrow}\cap [\om]^{<\om}$ generate the same space $X_\mc{F}$.

\begin{exa}\label{locally-finite}
Probably the most obvious examples of compact families of finite sets are the locally finite ones, that is, when $\{F\in\mc{F}\colon n\in F\}$ is finite for every $n$ (and non-empty, as we always work with covers). It follows from \cite[Proposition 6.4]{Our-paper} that in this case all $\mc{F}$-ideals are generalized density ideals.
\end{exa}

Before the main result of this section, we recall some notions and results from the theory of Banach spaces and also from general topology:

If $X,Y$ are Banach spaces then we say that $X$ is {\em $Y$-saturated} if every infinite dimensional closed subspace of $X$ contains a copy of $Y$.

If $X$ is topological space, then by recursion on $\al$ we define the {\em Cantor-Bendixson (CB) derivatives} of $X$ as follows: $X^{(0)}=X$, $X'=\{$accumulation points of $X\}$,  $X^{(\al+1)}=(X^{(\al)})'$, and if $\ga$ is limit then
$X^{(\ga)}=\bigcap_{\al<\ga}X^{(\al)}$. Let \[ \mrm{rk}(X)=\min\{\al\colon X^{(\al)}=X^{(\al+1)}\} \] denote the {\em CB-rank} of $X$. If $X$ is countable and compact, then $\mrm{rk}(X)$ is always a successor ordinal, and if  $\mrm{rk}(X)=\al+1$, then
$X^{(\al)}$ is (nonempty) finite (and so $X^{(\al+1)}=\0$). The classical theorem of Mazurkiewicz and Sierpi\'{n}ski says (see \cite{Sierpinski}) that in this case, if $|X^{(\al)}|=M$, then $X$ is homeomorphic to the ordinal $\om^\al M+1$ (equipped with the topology generated by its natural well-order).

We will use the following easy observation: If $X$ and $Y$ are compact metric spaces and $f\colon X\to Y$ is a continuous open surjection with finite fibers (i.e. $|f^{-1}(\{y\})|<\om$ for every $y\in Y$), then $X^{(\al)}=f^{-1}[Y^{(\al)}]$ for every $\al$. In particular, in this case $\mrm{rk}(X)=\mrm{rk}(Y)$.

\begin{thm}\label{compactly-supported} Let $\mc{F}\subseteq [\om]^{<\om}$ be a cover of $\om$. Then the following are equivalent:
\begin{itemize}
\item[(1)] $\mc{F}$ is precompact.
\item[(2)] $X_\mc{F}$ is $c_0$-saturated.
\item[(3)] $X_\mc{F}$ does not contain $\ell_1$, i.e. $(e_n)$ is a shrinking basis.
\item[(4)] Non-trivial $\mc{F}$-ideals are not $F_\sigma$.
\end{itemize}
\end{thm}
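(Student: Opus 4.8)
The plan is to prove the cycle $(1)\to(2)\to(3)\to(4)\to(1)$, since $(2)\to(3)$ is immediate ($c_0$ does not contain $\ell_1$, so a $c_0$-saturated space cannot contain $\ell_1$). The core combinatorial content lives in the implications $(1)\to(2)$ and $(4)\to(1)$, and I expect $(4)\to(1)$ to be the main obstacle.

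For $(1)\to(2)$, let $Y\subseteq X_\mc{F}$ be an infinite dimensional closed subspace. First I would pass to a normalized basic sequence $(y_k)$ in $Y$ which, by a standard gliding-hump/perturbation argument, may be taken to be a block sequence of $(e_n)$ (up to a small perturbation and hence up to equivalence, this lives inside $Y$ after a further perturbation; alternatively use that a block-perturbation of a basic sequence spans an isomorphic subspace, and copies of $c_0$ transfer). So it suffices to show: every normalized block sequence $(y_k)$ of $(e_n)$ in $X_\mc{F}$ has a subsequence equivalent to the $c_0$-basis. Write $y_k = \sum_{i\in I_k} a_i e_i$ with $(I_k)$ consecutive finite intervals. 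Because $\mc{F}$ is precompact, $\mc{F}^\downarrow$ contains no infinite chain, so there is a uniform bound --- this is the key point --- on how much ``mass'' of finitely many blocks any single $F\in\mc{F}$ can see. Concretely: for any finite set $K$ of indices and any $F\in\mc{F}$, since $F\cap\bigcup_{k\in K}I_k$ is a subset of $F$ lying in $\mc{F}^\downarrow$, and $\overline{\mc{F}}\subseteq[\om]^{<\om}$, a compactness argument (identical in spirit to the Cantor--Bendixson/$\Delta$-system analysis sketched before the theorem) shows that $\sup\{\|\sum_{k\in K}\eps_k y_k\|_{\Phi_\mc{F}}\colon \eps_k=\pm1\}$ stays bounded as $|K|\to\infty$ after passing to a subsequence; this is precisely ``$(y_k)$ is perfectly bounded'', whence by the Bessaga--Pe\l czy\'nski $c_0$ theorem (Theorem \ref{Bessaga}) $\sum y_k$ is unconditionally convergent. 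Combined with normalization (the lower $c_0$-estimate $\|\sum_{k\in K}\eps_k y_k\|\geq \max_k\|y_k\|\cdot c$ for some $c>0$, using that each $y_k$ is normalized and $F$'s covering singletons give $\|y_k\|\geq\max_i|a_i|$), this yields equivalence with the $c_0$-basis on a subsequence.

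For $(3)\to(4)$: suppose $\mc{I}_{\mc{F},\tau}$ is a non-trivial $F_\sigma$ $\mc{F}$-ideal. Then $\mc{I}_{\mc{F},\tau}=\mc{I}^{X_\mc{F}}_{(\tau_ne_n)}$ is represented in $X_\mc{F}$ via the unconditional basis, and since $X_\mc{F}$ contains no copy of $c_0$ would force $F_\sigma$-ness by Theorem \ref{DrLa}, I rather argue contrapositively through the structure of $\mc{F}$-ideals. Actually the cleaner route: $(3)$ says $(e_n)$ is shrinking, equivalently $X_\mc{F}$ has no $\ell_1$; I want to conclude non-trivial $\mc{F}$-ideals are not $F_\sigma$. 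If some non-trivial $\mc{F}$-ideal $\mc{I}_{\mc{F},\tau}=\mrm{Fin}(\varphi_{\mc{F},\tau})$ were $F_\sigma$, then by Theorem \ref{char}(3) it is an $F_\sigma$ P-ideal $=\mrm{Fin}(\varphi_{\mc{F},\tau})=\mrm{Exh}(\varphi_{\mc{F},\tau})$, and on an $\mc{I}$-positive set $X$ where the ideal is tall (such $X$ exists unless it is a trivial modification of $\mrm{Fin}$, by Fact \ref{nowheretall} and the non-triviality) one builds, using that $\sup\{\varphi_{\mc{F},\tau}(F)\colon F\in[X]^{<\om}\}=\infty$, a block sequence witnessing a copy of $\ell_1$ inside $X_\mc{F}$ --- the $\mc{F}$-sets realizing unbounded $\ell_1$-sums on $X$ give exactly the lower $\ell_1$-estimate. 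This contradicts $(3)$.

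For the remaining implication $(4)\to(1)$ --- the hard part --- I argue by contraposition: if $\mc{F}$ is \emph{not} precompact, then $\mc{F}^\downarrow$ contains an infinite chain $s_0\subsetneq s_1\subsetneq\cdots$, hence (after passing to a subsequence and intersecting) an infinite set $\{n_0,n_1,\dots\}$ such that every finite initial segment $\{n_0,\dots,n_{k}\}$ lies in $\mc{F}^\downarrow$. On $D=\{n_j\colon j\in\om\}$ the restricted norm $\Phi_\mc{F}$ dominates the $\ell_1$-type sum over initial segments, so with the weight sequence $\tau=\lambda$ (or any $\tau$ with $\sum_{j}\tau_{n_j}=\infty$ but $\tau_{n_j}\to0$) one checks $\varphi_{\mc{F},\tau}(D\setminus k)\not\to$ a limit that forces $F_\sigma$-ness; more precisely one shows $\mrm{Fin}(\varphi_{\mc{F},\tau})\cap\mc{P}(D)$ is a summable-like but genuinely non-$F_\sigma$ object, or directly that $D$ carries a decreasing sequence in $\mc{I}^+$ with no positive pseudointersection, so by Theorem \ref{nonfsigmachar} the resulting non-trivial $\mc{F}$-ideal is not $F_\sigma$ --- wait, that is the wrong direction. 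The correct reading: non-precompactness gives an infinite chain, which makes $\Phi_\mc{F}$ restricted to $D$ equal (up to the weights) to the $\ell_1$-norm on $D$, so $\varphi_{\mc{F},\tau}\clrest\mc{P}(D)$ is literally a summable submeasure and $\mc{I}_{\mc{F},\tau}\clrest D=\mc{I}_{\tau\clrest D}$ is a \emph{non-trivial $F_\sigma$} ideal; choosing $\tau$ so that this is a proper non-trivial restriction (e.g. $\tau_{n_j}=2^{-j}$) produces a non-trivial $F_\sigma$ $\mc{F}$-ideal, contradicting $(4)$. Pinning down ``infinite chain in $\mc{F}^\downarrow$ $\Rightarrow$ an infinite $D$ all of whose finite subsets are in $\mc{F}^\downarrow$'' is exactly the content of the Fact quoted before the theorem (part (b)), so this step is clean once that Fact is invoked; the only genuine care needed is ensuring the $\mc{F}$-ideal obtained is non-trivial, which is arranged by the choice of $\tau$ and by possibly unioning with a trivial ideal on $\om\setminus D$.
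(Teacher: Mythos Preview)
Your cycle $(1)\to(2)\to(3)\to(4)\to(1)$ relocates the hard implication rather than avoiding it: in the paper's scheme the difficult step is $(1)\to(4)$, and your $(3)\to(4)$ is equivalent to it modulo the easy implications. Your sketch for $(3)\to(4)$ (``the $\mc{F}$-sets realizing unbounded $\ell_1$-sums on $X$ give exactly the lower $\ell_1$-estimate'') is not an argument. Knowing that $\varphi_{\mc{F},\tau}(X)=\infty$ only tells you there are $F_n\in\mc{F}$ with $\sum_{i\in F_n\cap X}\tau_i\to\infty$; it gives no control on how these $F_n$ overlap, so no $\ell_1$-lower estimate for any block sequence follows. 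The paper's proof of the corresponding implication $(1)\to(4)$ is long for a reason: it proceeds by a minimal-counterexample argument on the Cantor--Bendixson rank of $\overline{\mc{F}}$, embeds $X_\mc{F}$ into $C(K_\mc{F})$, and uses a delicate decomposition into ideals represented on proper clopen pieces of $K_\mc{F}$ together with the $F_\sigma$/generalized-density dichotomy. None of this is visible in your sketch.

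Your $(1)\to(2)$ also has a genuine gap. The claim ``every normalized block sequence of $(e_n)$ has a subsequence that is perfectly bounded (hence equivalent to the $c_0$-basis)'' is false already for $\mc{F}=\mc{S}_1$: since $\mc{S}_1$ is spreading, for \emph{any} subsequence $(e_{n_k})$ and any $m$ one can pick $k_1<\dots<k_m$ with $n_{k_1}\geq m$, whence $\{n_{k_1},\dots,n_{k_m}\}\in\mc{S}_1$ and $\big\|\sum_{j\leq m}e_{n_{k_j}}\big\|\geq m$. So no subsequence of the standard basis of the Schreier space is perfectly bounded, yet $X_{\mc{S}_1}$ \emph{is} $c_0$-saturated. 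Copies of $c_0$ inside a block subspace arise as further \emph{block} sequences, not subsequences, and producing them requires more than the $\Delta$-system observation you invoke. The paper sidesteps this entirely by embedding $X_\mc{F}$ isometrically into $C(K_\mc{F})$ with $K_\mc{F}$ countable compact and quoting the Pe\l czy\'nski--Semadeni theorem that such $C(K)$ spaces are $c_0$-saturated.

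Your $(4)\to(1)$ is essentially the paper's argument (non-precompact $\Rightarrow$ an infinite $D$ with $[D]^{<\om}\subseteq\mc{F}^\downarrow$ $\Rightarrow$ a tall summable $\mc{F}$-ideal), but your concrete choice $\tau_{n_j}=2^{-j}$ is the wrong one: it makes $D\in\mc{I}_{\mc{F},\tau}$ and the ideal trivial. You need $\tau_n\to 0$, $\sum_{n\in D}\tau_n=\infty$, and $\sum_{n\notin D}\tau_n<\infty$, exactly as the paper takes; then $\mc{I}_{\mc{F},\tau}=\{B:\sum_{n\in B\cap D}\tau_n<\infty\}$ is a tall summable (hence non-trivial $F_\sigma$) ideal. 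Finally, note that you misidentified the main obstacle: $(4)\to(1)$ is the easy direction, while $(1)\to(4)$ (your $(3)\to(4)$) carries all the weight.
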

\begin{proof}
(1)$\to$(2): As $X_\mc{F}=X_{\overline{\mc{F}}}$, we may assume that $\mc{F}$ is compact. We know (see \cite{Castillo-Gonzales}) that if $\mc{F}$ is compact then $X_\mc{F}$ can be embedded isometrically in a $C(K)$ space, more precisely, in
$C(K_\mc{F})$ where
\[K_\mc{F} =\big\{y=(y_n)\in \{-1,0,1\}^\om\colon \mrm{supp}(y)\in \mathcal{F}\big\}.\] It is trivial to check that $K_\mc{F}$ is compact, and that the map $X_\mc{F}\to C(K_\mc{F})$, $x=(x_n)\mapsto f_x$, $f_x(y)=\sum_{n\in \mrm{supp}(y)}x_ny_n$ is an isometry.
Since Banach spaces of the form $C(K)$, where $K$ is countable and compact, are $c_0$-saturated (see \cite{Semadeni59}), $X_\mc{F}$ is also $c_0$-saturated.

\smallskip
(2)$\to$(3): Trivial.

\smallskip
(3)$\to$(1) and (4)$\to$(1): Suppose now that $\mathcal{F}$ is not precompact. Then, since we can assume that $\mathcal{F}$ is hereditary (because $X_\mc{F}=X_{\mc{F}^\downarrow}$), there is an infinite $A\subseteq \omega$ such that $A\cap n \in
\mathcal{F}$ for each $n$. As $\Phi_\mc{F}(P_A(x))=\sup\{\sum_{i\in A\cap n} |x_i|\colon n\in \omega\}=\sum_{i \in A} |x_i|$, the sequence $(e_i)_{i\in A}$ is isometrically equivalent to the standard basis of $\ell_1$. In particular, if $\tau=(\tau_n)\in (0,\infty)^\om$ such that $\tau_n\to 0$, $\sum_{n\in A}\tau_n=\infty$, and $\sum_{n\notin A}\tau_n<\infty$, then $\mc{I}_{\mc{F},\tau}$ is a tall summable ideal.

\smallskip
(1)$\to$(4): As above, we may assume that $\mathcal{F}$ is compact. Now, suppose on the contrary that there is an $F_\sigma$ P-ideal $\mc{I}$ which is not a trivial modification of $\mrm{Fin}$ but $\mc{I}=\mc{I}_{\mc{F},\tau}$ for some compact $\mc{F}$.
Applying Fact \ref{nowheretall} and Corollary \ref{nowheretallcor}, there is a $D\in\mc{I}^+$ such that $\mc{I}\clrest D$ is tall. Now, $\mc{F}[D]=\{D\cap F\colon F\in \mc{F}\}$ is compact in $\mc{P}(D)$, it covers $D$, and $\mc{I}\clrest D=\mc{I}_{\mc{F}[D],\tau\clrest D}$. Therefore, we can assume that our counterexample is tall (from now on $D=\om$, of course): $\mathcal{I}=\mathrm{Exh}(\varphi)=\mathrm{Fin}(\varphi)$ for some lsc $\varphi$ such that $\varphi(\{n\})\to 0$.

\smallskip
Let $\al$ be the minimal ordinal such that there are a compact $\mc{F}\subseteq [\om]^{<\om}$ of rank $\al+1$ covering $\om$ and a $\tau\in (0,\infty)^\om$ such that $\mc{I}=\mc{I}_{\mc{F},\tau}$. Fix $\mc{F}$ and $\tau$ witnessing this
definition.

\begin{claim}
We may assume that $\mathcal{F}^{(\alpha)} = \{\emptyset\}$.
\end{claim}
\begin{proof}[Proof of the claim] 
We know that $\mathcal{F}^{(\alpha)}$ is finite. Let $N>\max\{\max(F)\colon F\in \mathcal{F}^{(\alpha)}\}$ and $\mathcal{F}[\om\setminus N]= \{F\setminus N\colon F\in \mathcal{F}\}$. Since $\mathcal{F} \to \mc{F}[\omega\setminus N]$, $F\mapsto F \setminus N$ is continuous, open, and has finite fibers, $\mrm{rk}(\mathcal{F}[\om\setminus N]) = \mrm{rk}(\mathcal{F})$.  Define the family $\mathcal{G} = \mathcal{F}[\om\setminus N]\cup\{ \{n\}\colon n\leq N\}$. Then $\mrm{rk}(\mc{G})=\mrm{rk}(\mathcal{F})$, $\mathcal{G}^{(\alpha)}=\{\0\}$, and $\mathcal{I}_{\mathcal{F}, \tau} = \mathcal{I}_{\mathcal{G},\tau}$, hence we can work with $\mathcal{G}$ instead of $\mathcal{F}$.
\end{proof}

Clearly, $\al\geq 1$. In fact $\al\geq 2$. If $\al=1$ then $\mc{F}$ is locally finite, hence $\mc{I}$ is a tall generalized density ideal (see Example \ref{locally-finite}), which cannot be an $F_\sigma$ ideal.

\smallskip
The map $K_\mc{F}\to \mc{F}$, $y\mapsto\mrm{supp}(y)$ is a continuous open surjection with finite fibers, hence $K_\mc{F}$  is also of rank $\al+1$, $K_\mc{F}^{(\al)}=\{$constant $0$ sequence$\}$, and so there is a homeomorphism $\omega^\alpha+1\to K_\mathcal{F}$, $\beta \mapsto y^\beta$ such that
$y^{\om^\alpha}$ is the constant $0$ sequence.

\smallskip
Let $f_n\in C(K_\mc{F})$ be the image of $e_n\in X_\mc{F}$ under the isometric embedding $X_\mc{F}\to C(K_\mc{F})$, that is, $f_n(y)=y_n$.
As $f_n$ is continuous, $\mrm{ran}(f_n)\subseteq\{-1,0,1\}$, and $f_n(y^{\om^\al})=0$, we can fix $\ga_n<\om^\al$ such that $f_n(y^{\delta})=0$ for every $\delta\in (\ga_n,\om^\al]$. We can assume that $\ga_0<\ga_1<\cdots$ tends to $\om^\al$.

For $n\in\om$ let \[ K_n=\big\{y^{\be}\colon \be\leq\ga_n\big\},\] a compact subset of $K_\mc{F}$.
Clearly $\bigcup_{n\in\om}K_n=K_\mc{F}$. Define the lsc submeasures $\psi$ and $\psi_n$ for $n\in\om$ on $\mc{P}(\om)$ as follows:
\begin{align*}
\psi(A)=&\sup\bigg\{\bigg\|\sum_{i\in B} \tau_if_i\bigg\|\colon B\in [A]^{<\om}\bigg\},\\
\psi_n(A)=&\sup\bigg\{\bigg\|\sum_{i\in B}\tau_if_i\clrest K_n\bigg\|\colon B\in [A]^{<\om}\bigg\}
\end{align*}
where $\tau_if_i\clrest K_n\in C(K_n)$ and $\|\cdot\|$ denotes the sup-norm on these spaces. Then $\mrm{Exh}(\psi)=\mc{I}$ and hence we can assume that $\psi \leq \varphi$ on $\mc{P}(\om)$ (otherwise we can work with $\varphi' = \psi + \varphi$ because $\mc{I}=\mrm{Exh}(\varphi')=\mrm{Fin}(\varphi')$). Let
\[ \mathcal{I}_n = \mathrm{Exh}(\psi_n)=
\Big\{A\subseteq \omega \colon \sum_{i\in A}\tau_if_i\clrest K_n\;\text{is unconditionally convergent in}\;C(K_n)\Big\}.\]
Notice that $(\mathcal{I}_n)_{n\in\om}$ forms a non-increasing sequence of ideals containing $\mathcal{I}$.

\begin{claim}
	$\mc{I}\ne \mc{I}_n$ for every $n\in\om$.
\end{claim}
\begin{proof}[Proof of the claim] Let $\mc{G}=\{\mrm{supp}(y)\colon y\in K_n\}$. Then $\mc{G}$ is a continuous open image of $K_n$ with finite fibers, and hence it is compact and $\mrm{rk}(\mc{G})=\mrm{rk}(K_n)< \al+1$. Notice that $K_n$ is a subset
	of $K_\mc{G}=\{y\in\{-1,0,1\}^\om\colon \mrm{supp}(y)\in \mc{G}\}$ and typically it is a proper subset. The family $\mc{G}$ does not necessarily cover $\om$, so define $G=\bigcup\mc{G}$ and $\mc{H}=\mc{G}\cup[\om\setminus G]^{\leq 1}$. Then $\mc{H}$ is compact and $\mrm{rk}(\mc{G})\leq \mrm{rk}(\mc{H})\leq\max\{\mrm{rk}(\mc{G}),2\}< \al+1$ (because $\al\geq 2$).

We show that $\mc{I}_n=\mc{I}_{\mc{H},\sigma}$ where $\sigma\clrest G=\tau\clrest G$ and $\sigma_i=2^{-i}$ for $i\in \om\setminus G$, and hence $\mc{I}_n\ne\mc{I}$ by the definition of $\al$.

$K_n\subseteq K_\mc{H}$ and if $g_i$ is the image of $e_i\in X_\mc{H}\to C(K_\mc{H})$, then $g_i\clrest K_n=f_i\clrest K_n$, in particular, if $\sum_{i\in A}\sigma_ig_i$ is unconditionally convergent in $C(K_\mc{H})$, then $\sum_{i\in A}\sigma_i f_i\clrest K_n$ is unconditionally convergent in $C(K_n)$. As $f_i(y)=0$ for every $i\in(\om\setminus G)$ and $y\in K_n$, $\sum_{i\in A}\tau_if_i\clrest K_n$ is also unconditionally convergent, therefore $\mc{I}_{\mc{H},\sigma}\subseteq\mc{I}_n$.

Conversely, assume that $\sum_{i\in A}\tau_if_i\clrest K_n$ is unconditionally Cauchy, that is, \[ \forall\;\eps>0\;\exists\;N_\eps\in\om\;\forall\;B\in [A\setminus N_\eps]^{<\om}\;\bigg\|\sum_{i\in B}\tau_if_i\clrest K_n\bigg\|<\eps.\] We know that  $\|\sum_{i\in B}\tau_if_i\clrest K_n\|<\eps$ holds iff $|\sum_{i\in B}\tau_if_i(y)|=|\sum_{i\in B}\tau_iy_i|<\eps$ for every $y\in K_n$. We would like to show that $\sum_{i\in A}\sigma_ig_i$ is also unconditionally Cauchy in $K_\mc{H}$. Fix $\eps>0$, let $N\geq N_{\eps/2}$ such that $\sum_{i\in \om\setminus (G\cup N)}2^{-i}<\eps$, finally let $B\in [A\setminus N]^{<\om}$ and $y\in K_\mc{H}$ be arbitrary.

If $\mrm{supp}(y)\notin\mc{G}$ then \[ \bigg|\sum_{i\in B}\sigma_ig_i(y)\bigg|\leq \bigg|\sum_{i\in B\cap G}\sigma_i\cdot 0\bigg|+\bigg|\sum_{i\in B\setminus G}\sigma_iy_i\bigg|\leq \sum_{i\in B\setminus G}2^{-i}<\eps.\]

Now assume that $\mrm{supp}(y)\in \mc{G}$ but $y\notin K_n$ (for $y\in K_n$ we are done) and denote $B' = B \cap \mrm{supp}(y)\subseteq G$. Then $|\sum_{i\in B}\sigma_iy_i|=|\sum_{i\in B'}\sigma_iy_i|=|\sum_{i\in
B'}\tau_iy_i|$. We know that there is $y'\in K_n$ with the same support. Now if we partition $B'=P\cup Q$ where $P=\{i\colon y_iy'_i=1\}$ and $Q=\{i\colon y_iy'_i=-1\}$, then \[ \bigg|\sum_{i\in B'}\tau_iy_i\bigg|\leq\bigg|\sum_{i\in P}\tau_iy_i\bigg|+\bigg|\sum_{i\in
Q}\tau_iy_i\bigg|=\bigg|\sum_{i\in P}\tau_iy'_i\bigg|+\bigg|\sum_{i\in Q}\tau_iy'_i\bigg|<\eps.\qedhere\]
\end{proof}

If $E,F\in [\om]^{<\om}$ are non-empty then we write $E\leq F$ if $\max(E)\leq \min(F)$, and similarly, $E< F$ if $\max(E)<\min(F)$.

\begin{claim} There is a sequence $(A_n)_{n\in\om}$ such that (i) $A_n \in \mc{I}_n\setminus \mc{I}$ for every $n$, (ii) $\psi_n(A_n) < 2^{-n}$ for every $n$, and (iii) $A_n\cap A_k=\0$ if $n\ne k$.
\end{claim}
\begin{proof}[Proof of the Claim]
Fix an arbitrary sequence $(I_n)_{n\in\om}$ such that (i) holds. Let $(J_k)$ be a sequence such that $J_k = I_n$ for each $k$ and some $n$, and $H_n = \{k\in\om\colon J_k = I_n\}$ is infinite for every $n\in\om$. By recursion on $k$ we can construct
a sequence $B_0<B_1<\cdots$ of finite sets such that $B_k\subseteq J_k$ and $\varphi(B_k)>k$ (because $J_k\notin\mc{I}=\mrm{Fin}(\varphi)$). Now the sequence  $A_n = \bigcup_{k\in H_n}B_k$ satisfies (i) and (iii). By throwing out finitely many points from each $A_n$ we obtain the desired family.
\end{proof}

In particular, since $\psi_n \leq \psi_k$ for $n\leq k$, we know that $A_{\geq n}\colon =\bigcup_{k\geq n} A_k \in \mathcal{I}_n$ for every $n$, and by (ii) that $\psi_n(A_{\geq n})<2^{-n+1}$.

\smallskip
We can easily construct a sequence $X'_{0}<X'_{1}<\cdots$ of non-empty finite subsets $\om$ such that
$\varphi(X'_n) \approx 1$, $X'_{0} \subseteq A_{0}$, and $X'_{n+1} \subseteq A_{m_n}$ where $m_n=\max(X'_n)+1$ for every $n$.

\begin{claim} There is a set $X \subseteq \bigcup_n X'_n$ such that $\varphi(X \cap X'_n)\to 0$ and $\varphi(X)=\infty$.
\end{claim}
\begin{proof}[Proof fo the claim]
Let $X'=\bigcup_{n\in\om}X'_n$. Consider the ideals $\mathcal{I}' =\mathcal{I}\clrest X'$ and $\mathcal{Z}_{\vec{\vartheta}}$, where $\vec{\vartheta} = (\vartheta_n)$, $\vartheta_n = \varphi\clrest X'_n$. Clearly $\mathcal{I}'\subseteq
\mathcal{Z}_{\vec{\vartheta}}$ but since $\mathcal{I}'$ is an $F_\sigma$ ideal and $\mathcal{Z}_{\vec{\vartheta}}$ is a  tall generalized density ideal, they are not equal and every $X\in \mathcal{Z}_{\vec{\vartheta}}\setminus \mathcal{I}'$ is
as desired.
\end{proof}

Finally, we show that $X\in \mathrm{Exh}(\psi)$, and hence $\mrm{Exh}(\psi)\ne\mrm{Fin}(\varphi)$, a contradiction.

Let $X_n = X \cap X'_n$. Notice that $X\setminus m_n\subseteq A_{\geq m_n}$ because $X_{n+1}\subseteq A_{m_n}$, $X_{n+2}\subseteq A_{m_{n+1}}$ etc. Now $\psi=\sup\{\psi_y\colon y\in K_\mc{F}\}$ where
\[ \psi_y(A)=\sup\bigg\{\bigg|\sum_{i\in B}\tau_if_i(y)\bigg|\colon B\in [A]^{<\om}\bigg\}\]
is an lsc submeasure on $\om$. Fix such $y$ and the smallest $m$ such that $y\in K_m$.

If $m\leq m_n$ then $\psi_y(X\setminus m_n)\leq\psi_{m_n}(X\setminus m_n)\leq\psi_{m_n}(A_{\geq m_n})< 2^{-m_n+1}\leq 2^{-n}$.

If $m_n\leq m_{n'}\leq m < m_{n'+1}$ for some $n'\geq n$ then
\[ \psi_y\big(X\setminus m_n\big)\leq \psi_y\big(X\cap [m_n,m_{n'})\big)+\psi_y\big(X\cap [m_{n'},m_{n'+1})\big)+\psi_y\big(X\setminus m_{n'+1}\big)\]
where $\psi_y(X\cap [m_n,m_{n'}))=0$ because $y=y^\de$ for some $\de>\ga_{m-1}\geq \ga_{m_{n'}-1}$ and so $f_i(y)=0$ for every $i<m_{n'}$; as $X_{n'+1}=X\cap [m_{n'},m_{n'+1})$ we know that \[ \psi_y(X\cap [m_{n'},m_{n'+1}))=\psi_y(X_{n'+1})\leq\psi(X_{n'+1})\leq \varphi(X_{n'+1})\] and $\psi_y(X\setminus m_{n'+1})<2^{-n'-1+1}\leq 2^{-n}$ just like in the first case.

Therefore, $\psi_y(X\setminus m_n)\leq \varphi(X_{n(y)})+2^{-n}$ for every $y$ with some $n(y)>n$. As $\varphi(X_n)\xrightarrow{n\to\infty} 0$, we obtain that $\psi(X\setminus m_n)\xrightarrow{n\to\infty}0$ as well.
\end{proof}

\begin{rem}
In the same way as above one can prove a strengthening of Theorem \ref{our-theorem}. Namely, if $\mathcal{I}$ is represented in $C(\ga+1)$, for some countable ordinal $\ga$, then $\mathcal{I}$ is either a summable ideal or not $F_\sigma$: let $\ga$ be minimal such that there is an $F_\sigma$ ideal $\mathcal{I}$ which is not summable but represented in $C(\ga+1)$. Since $\mathcal{I}$ is not summable, $\ga$ has to be infinite and then it is easy to see that it has to be a limit ordinal.

Since $C(\ga+1)$ is isomorphic to $C_0(\ga+1)=\{f\in C(\al+1)\colon f(\ga)=0\}$ we may assume that $\mathcal{I}$ is represented in $C_0(\ga+1)$. If $(f_n)$ is a sequence in $C_0(\ga+1)$ representing $\mathcal{I}$, then we may assume that there is an
increasing sequence $\ga_n$ of ordinals converging to $\ga$ such that $f_n(\delta)=0$ for $\delta>\ga_n$. For each $n$ define $\mathcal{I}_n$ as the ideal represented by the sequence $(f_n\clrest (\ga_n+1))$ in $C(\ga_n+1)$ and notice that
$(\mathcal{I}_n)$ is non-increasing, every $\mathcal{I}_n$ contains $\mathcal{I}$, and $\mathcal{I} \ne \mathcal{I}_n$ because of the choice of $\ga$. From this point on we can continue the proof as above, starting with Claim C.

Since every space of the form $X_\mathcal{F}$, for a precompact $\mathcal{F}$, can be embedded in $C(\ga+1)$ for some $\ga$, as a corollary we obtain that among $F_\sigma$ ideals only summable ideals can be represented in $X_\mathcal{F}$ for a
precompact $\mathcal{F}$.
\end{rem}

\begin{prob}
Is $\mrm{tr}(\mc{N})$ an $\mc{F}$-ideal for some (pre)compact $\mc{F}$?
\end{prob}

\section{Applications to Pt\'{a}k's Lemma and Mazur's Lemma}\label{Ptak}

In this section we take a short digression to show that Theorem \ref{compactly-supported} has nice applications in combinatorics. Consider the following problem.

\begin{prob}\label{du} Fix a measure $\mu\colon \mathcal{P}(\omega) \to [0,\infty]$. Is there a compact hereditary family $\mathcal{F}$ of finite subsets of $\omega$ such that for every $E\in [\omega]^{<\omega}$ there is $F\in \mathcal{F}$ such that
	$F\subseteq E$ and $\mu(F)\geq \mu(E)/2$?
\end{prob}

This is a version of of Fremlin's DU Problem (see \cite{Fremlin-DU}, although Fremlin's note contains many versions of DU problem but this one). Notice that if $\mu$ is a counting measure, then the answer is positive, witnessed by the Schreier
family. However, if $\mu$ is a measure generating a tall proper ideal, that is, we assume that $\mu(\om)=\infty$ and that $\mu(\{n\})\to 0$, then the
answer seems to be difficult to guess. Theorem \ref{compactly-supported} implies that in this situation it is negative.

\begin{thm}\label{filling} Let  $\mu\colon \mathcal{P}(\omega) \to [0,\infty)$ be a measure such that $\mu(\omega)=\infty$ and $\mu(\{n\})\to 0$, $\mathcal{F}$ be a hereditary family of finite subsets of $\omega$, and fix $\varepsilon>0$. If for every $E\in [\omega]^{<\omega}$ there is $F\in \mathcal{F}$ such that $F\subseteq E$ and $\mu(F)\geq \varepsilon \mu(E)$, then $\mathcal{F}$ is not compact.
\end{thm}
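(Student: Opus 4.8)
The plan is to deduce the statement from the equivalence of (1) and (4) in Theorem~\ref{compactly-supported}, by using the ``filling'' hypothesis to exhibit a summable ideal as an $\mc{F}$-ideal. Set $\tau_n=\mu(\{n\})$ for each $n$. Two harmless normalizations come first. If some $\mu(\{n\})=0$ we replace $\tau_n$ by $2^{-n}$ on the set $Z=\{n\colon\mu(\{n\})=0\}$ (which is $\mu$-null, hence $\om\setminus Z$ is still infinite with $\mu(\om\setminus Z)=\infty$); this changes every submeasure considered below only by a summable amount, so it affects neither $\mrm{Fin}(\cdot)$ nor the condition $\mrm{tail}(\cdot)=0$, i.e. none of the relevant ideals. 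Also, testing the hypothesis on $E=\{n\}$ shows $\{n\}\in\mc{F}$ whenever $\mu(\{n\})>0$, so after adjoining the singletons $\{n\}$, $n\in Z$, to $\mc{F}$ it becomes a cover of $\om$; adding singletons preserves hereditarity and does not change whether $\mc{F}$ is (pre)compact, since any infinite chain in the downward closure of the enlarged family has a tail lying in $\mc{F}^\downarrow$. So from now on $\tau\in(0,\infty)^\om$, $\tau_n\to0$, $\mc{F}$ covers $\om$, and $\mu(A)=\sum_{n\in A}\tau_n$ for finite $A$.

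The core step is a sandwich between the induced submeasure $\varphi_{\mc{F},\tau}(A)=\Phi_{\mc{F}}(P_A(\tau))$ and $\mu$. For finite $E$, each $G\in\mc{F}$ contributes $\sum_{i\in G\cap E}\tau_i\le\sum_{i\in E}\tau_i=\mu(E)$, so $\varphi_{\mc{F},\tau}(E)\le\mu(E)$; and the hypothesis provides $F\in\mc{F}$ with $F\subseteq E$ and $\mu(F)\ge\eps\mu(E)$, so $\varphi_{\mc{F},\tau}(E)\ge\sum_{i\in F}\tau_i=\mu(F)\ge\eps\mu(E)$. Since $\varphi_{\mc{F},\tau}$ and $\mu$ are both lsc, taking suprema over finite subsets extends this to all of $\mc{P}(\om)$:
\[ \eps\,\mu(A)\le\varphi_{\mc{F},\tau}(A)\le\mu(A)\qquad\text{for every }A\subseteq\om. \]
Applying the same inequalities to the tails $A\setminus n$ and taking infima gives $\mrm{tail}_{\varphi_{\mc{F},\tau}}(A)=0$ iff $\mrm{tail}_\mu(A)=0$, i.e. $\mrm{Exh}(\varphi_{\mc{F},\tau})=\mrm{Exh}(\mu)$; and comparing where the submeasures are finite gives $\mrm{Fin}(\varphi_{\mc{F},\tau})=\mrm{Fin}(\mu)$. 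As $\mu$ is a measure, $\mrm{Exh}(\mu)=\mrm{Fin}(\mu)=\{A\colon\mu(A)<\infty\}=\mc{I}_\tau$, the summable ideal generated by $\tau$.

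Therefore $\mc{I}_{\mc{F},\tau}=\mrm{Exh}(\varphi_{\mc{F},\tau})=\mc{I}_\tau$ is an $\mc{F}$-ideal that is $F_\sigma$ (a summable ideal, by Theorem~\ref{char}(1)), proper (since $\mu(\om)=\infty$), and tall (since $\tau_n\to0$); a tall proper ideal is not a trivial modification of $\mrm{Fin}$ (trivial modifications are either improper or not tall), so $\mc{I}_{\mc{F},\tau}$ is a \emph{non-trivial} $\mc{F}$-ideal. Since (1) and (4) of Theorem~\ref{compactly-supported} are equivalent, a precompact family admits no non-trivial $F_\sigma$ $\mc{F}$-ideal, so $\mc{F}$ is not precompact; as compact families are precompact, $\mc{F}$ is not compact (indeed we obtain the formally stronger conclusion that $\mc{F}$ is not even precompact). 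All the mathematical weight sits in Theorem~\ref{compactly-supported}, used here as a black box; the only thing requiring attention is the bookkeeping of the first paragraph (keeping $\tau$ strictly positive and $\mc{F}$ a cover without disturbing the ideals), and the rest is just the translation of the DU-type hypothesis into ``$\mc{I}_\tau$ occurs as an $\mc{F}$-ideal''.
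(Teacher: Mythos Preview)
Your proof is correct and follows essentially the same route as the paper: sandwich $\varphi_{\mc{F},\tau}$ (with $\tau_n=\mu(\{n\})$) between $\eps\mu$ and $\mu$, identify the $\mc{F}$-ideal $\mc{I}_{\mc{F},\tau}$ with the tall summable ideal $\mrm{Exh}(\mu)$, and invoke Theorem~\ref{compactly-supported}. Your opening paragraph adds bookkeeping (strict positivity of $\tau$, $\mc{F}$ a cover) that the paper simply glosses over; it is fine, though the clause ``$\mu(A)=\sum_{n\in A}\tau_n$ for finite $A$'' is not literally true after you perturb $\tau$ on $Z$---harmless, since the sandwich then holds up to the summable error $\sum_{n\in Z}2^{-n}$ and $\mrm{Exh}$ is unaffected.
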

\begin{proof} Assume $\mathcal{F}$ is as in the theorem, consider the lsc submeasure defined by \[ \varphi(A) = \sup\{\mu(F\cap A)\colon F\in \mathcal{F}\},\] let $\mathcal{I} = \mathrm{Exh}(\varphi)$ and $\mathcal{J} = \mathrm{Exh}(\mu)$. As $\varphi \leq \mu$, we  know that $\mathcal{J}\subseteq \mathcal{I}$. On the other hand, $\varphi(A\setminus n) \geq \varepsilon \mu(A\setminus n)$ for every $A\subseteq\omega$ and $n\in \omega$, hence $\mathcal{I}\subseteq \mathcal{J}$ also holds, and so $\mathcal{I}=\mathcal{J}$. But $\mathcal{J}$ is a tall $F_\sigma$ ideal and $\mathcal{I}$ is and $\mc{F}$-ideal, a contradiction with Theorem \ref{compactly-supported}.
\end{proof}

This proof seems to be rather indirect. However, it is unclear for us how to prove it more directly. Note that most of the proofs of the negative answers to variants of the DU Problem uses  precalibers of measures.

Theorem \ref{filling} seems to be surprising if compared to Pt\'{a}k's Lemma (see \cite{Ptak}). Denote by $\mathfrak{M}$ the set of finitely supported probability measures on $\omega$.

\begin{thm}[Pt\'ak's Lemma] Let $\mathcal{F}$ be a hereditary family of finite subsets of $\omega$, and fix $\eps>0$. If for each $\nu\in \mathfrak{M}$ there is $F\in \mathcal{F}$ such that $\nu(F)\geq \varepsilon$, then
$\mathcal{F}$ is not compact.
\end{thm}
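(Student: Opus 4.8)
The plan is to derive Pt\'ak's Lemma as an immediate consequence of Theorem~\ref{filling} by a normalization trick. Fix once and for all a measure $\mu$ on $\om$ with $0<\mu(\{n\})$ for every $n$, $\mu(\{n\})\to 0$, and $\mu(\om)=\infty$ (for instance $\mu(A)=\sum_{n\in A}1/(n+1)$, or a suitable variant of the weight sequence $\lambda$). I claim that the hypothesis of Pt\'ak's Lemma implies the hypothesis of Theorem~\ref{filling} for this $\mu$, the same family $\mc{F}$, and the same $\eps$. Indeed, given a \emph{nonempty} $E\in[\om]^{<\om}$, the renormalized restriction
\[ \nu_E(A)=\frac{\mu(A\cap E)}{\mu(E)} \]
is a well-defined finitely supported probability measure on $\om$ (here one uses $\mu(\{n\})>0$ so that $\mu(E)>0$), hence by assumption there is an $F\in\mc{F}$ with $\nu_E(F)\ge\eps$, i.e.\ $\mu(F\cap E)\ge\eps\,\mu(E)$. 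Then $F\cap E\in\mc{F}$ by heredity, $F\cap E\subseteq E$, and $\mu(F\cap E)\ge\eps\,\mu(E)$, which is exactly the filling condition for $E$; the remaining case $E=\0$ is trivial with $F=\0\in\mc{F}$.

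Having checked that $\mu$, $\mc{F}$ and $\eps$ satisfy the hypothesis of Theorem~\ref{filling}, I would conclude at once that $\mc{F}$ is not compact, which is precisely the assertion of Pt\'ak's Lemma. As a side remark I would note that the Pt\'ak hypothesis already forces $\mc{F}$ to cover $\om$ --- applying it to the Dirac measure $\de_n$ and using heredity yields $\{n\}\in\mc{F}$ --- so there is no hidden covering assumption and the choice of $\mu$ above is legitimate throughout.

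I do not expect any genuine obstacle once Theorem~\ref{filling} is available: the only point requiring a little care is the choice of $\mu$, which must have \emph{strictly positive} atomic masses (so that the probability measures $\nu_E$ make sense), atoms tending to $0$, and infinite total mass (so that Theorem~\ref{filling} applies). All the real work sits in the proof of Theorem~\ref{filling}, which in turn reduces to Theorem~\ref{compactly-supported}. (If one insisted on a self-contained proof of Pt\'ak's Lemma, the classical route is a minimax/separation argument over the compact convex set $\mathfrak{M}$ combined with the Cantor--Bendixson structure of a compact hereditary $\mc{F}$; but in the present context the reduction above is considerably shorter, and it also makes transparent that Theorem~\ref{filling}, which only tests a single pathological measure, is the stronger statement.)
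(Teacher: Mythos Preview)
Your reduction is correct. Note, however, that the paper does not give its own proof of Pt\'ak's Lemma: it is quoted as a classical result with a reference. What the paper does prove is the strengthening in Corollary~\ref{localized-Ptak}, whose proof is the one-line observation that it is ``just a reformulation of Theorem~\ref{filling}.'' Your argument is precisely this observation read backwards: since $\mf{M}_\mu\subseteq\mf{M}$, the Pt\'ak hypothesis over all of $\mf{M}$ in particular gives the hypothesis of Corollary~\ref{localized-Ptak} (equivalently, of Theorem~\ref{filling}) for any fixed $\mu$ with strictly positive atoms, $\mu(\{n\})\to 0$, and $\mu(\om)=\infty$. So your route and the paper's are the same idea; the paper simply records the stronger conclusion (only $\mf{M}_\mu$ is needed) rather than re-deriving the classical lemma. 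Your side remark that the Dirac measures force $\{n\}\in\mc{F}$, so that $\mc{F}$ covers $\om$ and Theorem~\ref{compactly-supported} applies, is a nice point that the paper leaves implicit.
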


Using Theorem \ref{filling} we can see that instead of testing $\mathcal{F}$ against every measure in $\mathfrak{M}$, we may focus on finitely supported measures generated by just one measure (see also \cite[Remark II.3.33]{Ramsey-methods}). For a measure $\mu$ on $\omega$ denote $\mathfrak{M}_\mu$ the following family of finitely
supported probability measures on $\omega$: $\nu\in\mf{M}_\mu$ iff $\mrm{supp}(\nu)\in [\om]^{<\om}$, $\mu(\mrm{supp}(\nu))>0$, and
$\nu(A) = \mu(A\cap\mrm{supp}(\nu))/\mu(\mathrm{supp}(\nu))$ for $A\subseteq\om$.

\begin{cor}\label{localized-Ptak} Let  $\mu$ be a measure on $\omega$ such that $\mu(\omega)=\infty$ and $\mu(\{n\})\to 0$, $\mathcal{F}$ be a hereditary family of finite subsets of $\omega$, and fix $\varepsilon>0$. If for each $\nu \in\mathfrak{M}_\mu$ there is an $F\in\mc{F}$ such that  $\nu(F)\geq\varepsilon$, then $\mathcal{F}$ is not compact.
\end{cor}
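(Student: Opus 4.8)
The plan is to deduce Corollary~\ref{localized-Ptak} directly from Theorem~\ref{filling}. So let $\mu$, $\mc{F}$, and $\eps$ be as in the corollary; I want to verify the hypothesis of Theorem~\ref{filling}, namely that for every $E\in[\om]^{<\om}$ there is an $F\in\mc{F}$ with $F\subseteq E$ and $\mu(F)\geq\eps\mu(E)$. Once this is checked, Theorem~\ref{filling} immediately gives that $\mc{F}$ is not compact.

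To check it, first I would dispose of the case $\mu(E)=0$: then $\eps\mu(E)=0$, so any $F\subseteq E$ lying in $\mc{F}$ works, e.g.\ $F=\0$, which belongs to $\mc{F}$ since $\mc{F}$ is hereditary and nonempty (it covers $\om$). If $\mu(E)>0$, then the normalized measure $\nu_E$ given by $\nu_E(A)=\mu(A\cap E)/\mu(E)$ is a finitely supported probability measure with $\mrm{supp}(\nu_E)\subseteq E$ and $\mu(\mrm{supp}(\nu_E))>0$, so $\nu_E\in\mf{M}_\mu$. By the hypothesis of the corollary there is a $G\in\mc{F}$ with $\nu_E(G)\geq\eps$. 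Now set $F=G\cap E$: it is a subset of $E$, it lies in $\mc{F}$ by heredity, and
\[ \mu(F)=\mu(G\cap E)=\mu(E)\cdot\nu_E(G)\geq\eps\mu(E). \]
This is exactly what Theorem~\ref{filling} requires, so $\mc{F}$ is not compact.

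The only point worth spelling out is the translation between the two formulations: the corollary tests $\mc{F}$ against the \emph{value} $\nu_E(G)$ of a probability measure, whereas Theorem~\ref{filling} wants an actual subset of $E$ inside $\mc{F}$ capturing a fixed proportion of $\mu(E)$; the bridge is simply the replacement of $G$ by $G\cap E$, using that $\mc{F}$ is hereditary, together with the identity $\mu(G\cap E)=\mu(E)\nu_E(G)$. I do not expect any real obstacle here — all the substance is already contained in Theorem~\ref{filling} (and, through it, in Theorem~\ref{compactly-supported}); this corollary is just the observation that Fremlin-type ``filling'' need only be tested against the finitely supported probability measures derived from the single measure $\mu$.
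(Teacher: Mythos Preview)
Your proof is correct and is exactly the argument the paper has in mind: the paper's own proof is the single sentence ``This is just a reformulation of Theorem~\ref{filling}'', and you have spelled out precisely that reformulation (passing from $\nu_E$ to $F=G\cap E$ via heredity).
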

\begin{proof} This is just a reformulation of Theorem \ref{filling}.
\end{proof}

\begin{thm}[Mazur's Lemma] Let $X$ be a Banach space and let $(x_n)$ be a bounded weakly null sequence in $X$. Then for each $\varepsilon>0$ there is a finite convex combination $y = \sum_{i} \al_i x_i$ such that $\|y\|<\varepsilon$.
\end{thm}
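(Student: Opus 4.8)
The plan is to deduce Mazur's Lemma from Pt\'ak's Lemma, in the spirit of Theorem~\ref{filling}: transport the statement into a $C(K)$-space and encode the ``large convex combinations'' of $(x_n)$ by a compact hereditary family of finite sets. First I would make two harmless reductions. Passing to the separable subspace $Y=\overline{\mrm{span}}\{x_n\colon n\in\om\}$ (whose weak topology is the restriction of that of $X$, and in which a convex combination of small norm is one in $X$ as well), I may assume $X$ is separable; rescaling, I may assume $\|x_n\|\le1$ for all $n$. Then I would set $K=(B_{X^*},w^*)$, which is compact and \emph{metrizable} by the Banach-Alaoglu theorem, and recall that $x\mapsto(x^*\mapsto x^*(x))$ is a linear isometry $X\to C(K)$ (Hahn-Banach, $\|x\|=\sup_{x^*\in K}|x^*(x)|$), with $x^*(x_n)\xrightarrow{n\to\infty}0$ for every $x^*\in K$ since $(x_n)$ is weakly null.

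Next I would fix $\eps>0$, put $\eta=\eps/2$ and $\de=\eps/8$ (so $2\de+\eta<\eps$), and introduce the families
\[ \mc{F}=\big\{F\in[\om]^{<\om}\colon\exists\,x^*\in K\ \forall i\in F\ x^*(x_i)>\eta\big\},\qquad \mc{G}=\big\{F\in[\om]^{<\om}\colon\exists\,x^*\in K\ \forall i\in F\ x^*(x_i)<-\eta\big\}. \]
Both are hereditary, since a functional witnessing $F\in\mc{F}$ (or $\mc{G}$) witnesses membership of every subset of $F$. The crucial claim is that $\mc{F}$ and $\mc{G}$ are \emph{compact}: if, say, $\mc{F}$ were not compact, then (being hereditary) it would contain $\{a_1,\dots,a_k\}$ for all $k$ and some infinite $A=\{a_1<a_2<\cdots\}$; choosing $x^*_k\in K$ with $x^*_k(x_{a_j})>\eta$ for $j\le k$ and passing to a weak-$*$ convergent subsequence $x^*_{k_l}\to x^*\in K$ (here metrizability of $K$ is used), one would get $x^*(x_{a_j})\ge\eta>0$ for every $j$ (fix $j$, let $l\to\infty$), contradicting $x^*(x_{a_j})\to0$. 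Hence $\mc{H}=\mc{F}\cup\mc{G}$ is compact and hereditary (a finite union of compact families is compact).

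Then I would apply Pt\'ak's Lemma to $\mc{H}$ with parameter $\de$, contrapositively: since $\mc{H}$ is compact, there is $\nu\in\mathfrak{M}$ with $\nu(F)<\de$ for every $F\in\mc{H}$. Writing $\nu=\sum_{i\in S}a_i\delta_i$ with $S$ finite, $a_i\ge0$, $\sum_{i\in S}a_i=1$, I would set $y=\sum_{i\in S}a_ix_i$, a finite convex combination of the $x_i$, and check $\|y\|=\sup_{x^*\in K}|x^*(y)|<\eps$. For a fixed $x^*\in K$, split $S$ into $F^+=\{i\in S\colon x^*(x_i)>\eta\}$, $F^-=\{i\in S\colon x^*(x_i)<-\eta\}$ and the rest: then $F^+\in\mc{F}$ and $F^-\in\mc{G}$ are witnessed by this $x^*$, so $\sum_{i\in F^+}a_i<\de$ and $\sum_{i\in F^-}a_i<\de$, while $|x^*(x_i)|\le1$ on $F^+\cup F^-$ and $|x^*(x_i)|\le\eta$ off it; hence $|x^*(y)|\le\sum_{i\in F^+}a_i+\sum_{i\in F^-}a_i+\eta\sum_{i\in S}a_i<2\de+\eta<\eps$, and taking the supremum over $x^*$ finishes the proof.

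The main obstacle will be the compactness of $\mc{F}$ and $\mc{G}$ in the second step: the argument is short, but it is precisely where weak nullity of $(x_n)$ is used, and it has to be phrased with some care (the ascending chain $\{a_1,\dots,a_k\}$ together with a weak-$*$ cluster point of the witnessing functionals). Everything else is bookkeeping with the Hahn-Banach isometry and the counting inequality for $\nu$ on $F^+$ and $F^-$. I would also remark that when a norming measure on $\om$ is naturally available one could feed the localized Corollary~\ref{localized-Ptak} into the same scheme, but for an arbitrary Banach space the plain Pt\'ak Lemma used above is what is needed.
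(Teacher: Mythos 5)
Your proof is correct and takes essentially the same route as the paper: encode the sets on which (unit-ball) functionals are uniformly large as a compact hereditary family of finite sets, apply the contrapositive of Pt\'ak's Lemma to obtain a finitely supported probability measure $\nu$ small on the whole family, and let $\nu$ define the convex combination. Your splitting into the one-sided families $\mathcal{F},\mathcal{G}$, the normalization $\|x_n\|\le 1$, and the explicit weak$^*$ cluster-point argument for compactness are just detailed versions of the steps the paper states without proof (there a single family built from $\{n\colon |x^*(x_n)|\ge\varepsilon/2\}$ is used, with the details delegated to the cited reference).
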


Mazur's Lemma can be derived directly from Pt\'ak's Lemma (see \cite[Corollary II.3.35]{Ramsey-methods} for the detailed proof): For $x^*\in X^*$ define $F_{x^*} = \{n\in \omega\colon |x^*(x_n)|\geq \varepsilon/2\}$, and let $\mathcal{F}$ be the hereditary closure of $\{F_{x^*}\colon x^*\in X^*\}$. Notice that $\mathcal{F}$ is compact. Fix $\varepsilon>0$. Using Pt\'ak's Lemma, one can find $\nu\in \mathfrak{M}$ such that $\nu(F)<\varepsilon$ for some $F\in \mathcal{F}$, and then $y = \sum_{i\in \mathrm{supp}(\nu)} \nu(\{i\}) x_i$ is as desired.

Using Corollary \ref{localized-Ptak} instead of Pt\'ak's Lemma in the above proof, we may specify the form of the convex combination in Mazur's Lemma:

\begin{cor} Let $X$ be a Banach space, $(x_n)$ be a bounded weakly null sequence in $X$, and let $\mu$ be a measure on $\omega$ such that $\mu(\omega)=\infty$ and $\mu(\{n\})\to 0$. Then for each $\varepsilon>0$ there is a
	finite $G\subseteq \omega$ and a convex combination $y = \sum_{i\in G} \al_i x_i$ where $\al_i = \mu(\{i\})/\mu(G)$, such that $\|y\|<\varepsilon$.
\end{cor}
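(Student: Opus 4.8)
The plan is to mimic the proof sketched just above for the classical Mazur's Lemma, but to feed it Corollary~\ref{localized-Ptak} in place of Pt\'ak's Lemma, so that the resulting convex combination has the prescribed coefficients $\al_i=\mu(\{i\})/\mu(G)$. So, first I would fix $\eps>0$ and, exactly as in the classical argument, for each $x^*\in X^*$ put $F_{x^*}=\{n\in\om\colon |x^*(x_n)|\ge\eps/2\}$; since $(x_n)$ is bounded weakly null, each $F_{x^*}$ is finite (for otherwise $x^*$ would fail to be weakly null along an infinite set), so letting $\mc{F}$ be the hereditary closure of $\{F_{x^*}\colon x^*\in X^*\}$ gives a hereditary family of finite subsets of $\om$. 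The key topological point, which I would justify via Rosenthal's $\ell_1$ theorem (Theorem~\ref{Ros}) together with boundedness of $(x_n)$ — or, more directly, via a diagonal/compactness argument on the weak$^*$-compact dual ball — is that $\mc{F}$ is \emph{compact}: any sequence $(F_{x^*_k})$ has, after passing to a subsequence, a $\Delta$-system refinement whose root lies in $\mc{F}$.

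With $\mc{F}$ compact in hand, I would invoke the contrapositive of Corollary~\ref{localized-Ptak}: since $\mu$ satisfies $\mu(\om)=\infty$ and $\mu(\{n\})\to 0$, and $\mc{F}$ \emph{is} compact, it cannot be the case that for every $\nu\in\mf{M}_\mu$ some $F\in\mc{F}$ has $\nu(F)\ge\eps$ (here I should use a possibly smaller threshold, say $\eps/(2\sup_n\|x_n\|)$, chosen so that the final norm estimate closes). Hence there exists $\nu\in\mf{M}_\mu$ with $\nu(F)<\eps'$ for \emph{every} $F\in\mc{F}$; write $G=\mrm{supp}(\nu)$, so $\mu(G)>0$ and $\nu(\{i\})=\mu(\{i\})/\mu(G)$ for $i\in G$, and set $y=\sum_{i\in G}\nu(\{i\})x_i=\sum_{i\in G}(\mu(\{i\})/\mu(G))x_i$, a convex combination of the desired shape.

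Finally I would run the standard duality estimate to bound $\|y\|$. For any $x^*\in X^*$ with $\|x^*\|\le 1$, split $G$ into $G\cap F_{x^*}$ and $G\setminus F_{x^*}$: on the latter set $|x^*(x_i)|<\eps/2$, contributing at most $\eps/2$ to $|x^*(y)|$; on the former, since $G\cap F_{x^*}\in\mc{F}$ (hereditariness) we have $\nu(G\cap F_{x^*})<\eps'$, so that part contributes at most $\eps'\sup_n\|x_n\|$. Choosing $\eps'$ so that $\eps'\sup_n\|x_n\|\le\eps/2$ yields $|x^*(y)|<\eps$ for all $x^*$ in the dual ball, hence $\|y\|<\eps$ by Hahn--Banach. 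I expect the only genuinely non-routine step to be the verification that $\mc{F}$ is compact — everything else is bookkeeping — but this is precisely the classical fact underlying the usual derivation of Mazur's Lemma from Pt\'ak's Lemma (\cite[Corollary~II.3.35]{Ramsey-methods}), so it can be cited rather than reproved.
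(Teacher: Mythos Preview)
Your proof is correct and follows exactly the approach the paper indicates: run the standard derivation of Mazur's Lemma from Pt\'ak's Lemma, but invoke Corollary~\ref{localized-Ptak} in place of Pt\'ak's Lemma so that the probability measure $\nu$ produced lies in $\mf{M}_\mu$, whence $\al_i=\mu(\{i\})/\mu(G)$. The only minor point---glossed over in the paper's sketch as well---is that $\mc{F}$ should be built from functionals $x^*$ in the \emph{unit ball} of $X^*$ for the compactness claim to hold (Banach--Alaoglu, not Rosenthal, is the right tool here), but you correctly defer this to \cite[Corollary~II.3.35]{Ramsey-methods}.
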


\section{Schreier ideals}\label{schreier}
In Example \ref{trace-example} we already saw an example of a family of finite sets which induces a well-known analytic P-ideal, and at the same time generates a Banach space whose properties are quite unclear and which may be, at least hypothetically, a new and interesting example. This
section is devoted to an opposite situation. We will define a family of analytic P-ideals, which, to the best of our knowledge, were not described before in the literature and which are induced by the so called Schreier families, well-known in the theory
of Banach spaces (see e.g. \cite{Castillo-Gonzales}, \cite{Gasparis-Leung}).

We begin with recalling the definition of {\em Schreier families} (introduced in \cite{Alspach-Argyros}). Fix a {\em ladder system} \[ \vec\xi=(\xi^\al_n\colon \al<\om_1 \mbox{ is a limit ordinal and }n\in\om\setminus\{0\}) \] on $\om_1$, that is,
$\xi^\al_1<\xi^\al_2<\cdots$ tends to $\al$ for every limit $\al$; and by recursion on $\al<\om_1$ define $\mc{S}_\al=\mc{S}^{\vec\xi}_\al\subseteq [\om\setminus\{0\}]^{<\om}$ as follows: $\mc{S}_0=[\om\setminus \{0\}]^{\leq 1}$,
\[ \mc{S}_{\al+1}=\{\0\}\cup\left\{\bigcup_{j=1}^nE_j\colon n\in\om\setminus\{0\},\;E_j\in \mc{S}_\al\setminus\{\0\},\;\{n\}\leq E_1<E_2<\cdots< E_n\right\},\]
and for limit $\al$ let
\begin{align*}
\mc{S}_\al&=\{\0\}\cup\big\{E\colon \exists\;k\in\om\setminus\{0\}\;\big(\{k\}\leq E\;\text{and}\;E\in \mc{S}_{\xi^\al_k}\setminus\{\0\}\big)\big\}\\
&=\{\0\}\cup\bigcup_{k=1}^\infty\mc{S}_{\xi^\al_k}\clrest(\om\setminus k)\end{align*}
where if $\mc{F}\subseteq [\om\setminus \{\0\}]^{<\om}$ and $A\subseteq\om$ then $\mc{F}\clrest A=\{F\in\mc{F}\colon F\subseteq A\}$.

Notice that the classical Schreier family $\mc{S}=\mc{S}_1=\{\0\}\cup \{F\subseteq \om\setminus\{0\}\colon |F|\leq\min(F)\}$. For each $\alpha<\omega_1$ the family $\mc{S}_\al$ is hereditary, compact in $\mc{P}(\om)$, and {\em spreading}, that is, whenever $n\in\om\setminus\{0\}$, $\{a_1<a_2<\cdots<a_n\}\in\mc{S}_\al$, $b_1<b_2<\cdots<b_n$, and $a_i\leq b_i$ for every $i$, then
$\{b_1<b_2<\cdots<b_n\}\in\mc{S}_\al$ as well (see \cite[Proposition 4.2]{Alspach-Argyros}).

For $\al<\om_1$ let $X_\al=\mrm{EXH}(\Phi_{\mc{S}_\al})$ be the {\em $\al$th Schreier space}  and $\mc{I}_\al=\mc{I}_{\mc{S}_\al,\lam}$ the {\em $\al$th Schreier  ideal} in the sense of Section \ref{main}, that is,
$\mc{I}_\al=\mathrm{Exh}(\varphi_\al)$, where $\varphi_\al(A) =\varphi_{\mc{S}_\al,\lam}(A)= \Phi_{\mc{S}_\al}(P_A(\lambda))=\sup\{\sum_{i\in A\cap F}\lam_i:F\in\mc{S}_\al\}$. For each $\alpha<\omega_1$, $\mrm{rk}(\mc{S}_\al)=\om^\al+1$, and hence $\mc{S}_\al$ is homeomorphic to $\om^{\om^\al}+1$, furthermore $X_\al$ can be embedded in  $C(\mc{S}_\al)$ (see \cite[Propositions 4.9 and 4.10]{Alspach-Argyros}). 

Clearly, $X_0=c_0$ and $\mc{I}_0=\mc{P}(\om)$.

\begin{prop}
$\mathcal{I}_1 = \mathcal{Z}$.
\end{prop}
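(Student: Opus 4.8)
The plan is to use the description $\mc{Z}=\mc{I}_{\mc{P},\lam}=\mrm{Exh}(\psi)$ recorded in Example~\ref{denszero}, where $\mc{P}=\bigcup_n\mc{P}([2^n,2^{n+1}))$ and $\psi(A)=\sup_n 2^{-n}|A\cap[2^n,2^{n+1})|$, and then to compare $\psi$ with the submeasure $\varphi_1$. Since $\mc{S}_1$ is hereditary, for every $B\subseteq\om$ one has $\varphi_1(B)=\sup\{\sum_{i\in G}\lam_i\colon G\in\mc{S}_1,\ G\subseteq B\}$, which is the form we shall work with throughout.

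For the inclusion $\mc{I}_1\subseteq\mc{Z}$: every nonempty subset $G$ of a single block $[2^n,2^{n+1})$ satisfies $|G|\le 2^n\le\min(G)$, hence $G\in\mc{S}_1$; thus $\mc{P}\subseteq\mc{S}_1$, so $\psi\le\varphi_1$ pointwise, and therefore $\mc{I}_1=\mrm{Exh}(\varphi_1)\subseteq\mrm{Exh}(\psi)=\mc{Z}$. (Equivalently and directly: if $A\notin\mc{Z}$ then $2^{-n}|A\cap[2^n,2^{n+1})|\ge c$ for some $c>0$ and infinitely many $n$, and since $A\cap[2^n,2^{n+1})\in\mc{S}_1$ this forces $\varphi_1(A\setminus N)\ge c$ for every $N$, so $A\notin\mc{I}_1$.)

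For the converse $\mc{Z}\subseteq\mc{I}_1$, fix $A\in\mc{Z}$ and $\eps>0$. I would choose, in this order, an integer $J$ with $2^{2-J}<\eps/2$, then set $\de=\eps/(2J)$, then pick $n_0$ with $|A\cap[2^k,2^{k+1})|<\de\,2^k$ for all $k\ge n_0$ (possible since $A\in\mc{Z}$), and put $N=2^{n_0}$. The heart of the matter is the estimate of $\varphi_1(A\setminus N)$: given $G\in\mc{S}_1$ with $G\subseteq A\setminus N$ and $\min(G)\in[2^m,2^{m+1})$, one has $m\ge n_0$, and the Schreier condition yields $|G|\le\min(G)<2^{m+1}$; writing $G_k=G\cap[2^k,2^{k+1})$ we get $\sum_{i\in G}\lam_i=\sum_{k\ge m}2^{-k}|G_k|$, and splitting this sum at $k=m+J$ we bound the first $J$ summands by $\de$ each (from $|G_k|\le|A\cap[2^k,2^{k+1})|<\de\,2^k$) and the remaining tail by $2^{m+1}\sum_{k\ge m+J}2^{-k}=2^{2-J}$ (from $|G_k|\le|G|<2^{m+1}$). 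Hence $\sum_{i\in G}\lam_i<J\de+2^{2-J}<\eps$, so $\varphi_1(A\setminus N)\le\eps$; as $\eps$ was arbitrary, $\mrm{tail}_{\varphi_1}(A)=0$, i.e. $A\in\mc{I}_1$.

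The only real obstacle is this combinatorial estimate in the converse direction: it requires trading the number $J$ of ``near-bottom'' blocks that $G$ may spread over against the density decay rate $\de$ and the geometric decay of $\lam$, and it is precisely the Schreier constraint $|G|\le\min(G)$ that keeps the relevant tail geometrically small.
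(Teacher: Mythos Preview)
Your proof is correct and follows essentially the same approach as the paper's. In both arguments the inclusion $\mc{I}_1\subseteq\mc{Z}$ comes from $\mc{P}\subseteq\mc{S}_1$, and for the converse one fixes $G\in\mc{S}_1$ inside $A\setminus 2^{n_0}$, writes $\sum_{i\in G}\lam_i=\sum_{k\ge m}2^{-k}|G\cap[2^k,2^{k+1})|$, and combines the density bound $|G\cap[2^k,2^{k+1})|<\de\,2^k$ on the first few blocks with the Schreier bound $|G|<2^{m+1}$ on the tail. The only difference is bookkeeping: the paper fixes the density threshold first and then counts how many consecutive blocks $G$ can fill before exhausting $|G|\le\min(G)$, obtaining the bound $(1-\log_2\eps)\eps\to 0$; you instead fix the number $J$ of near-bottom blocks first and then choose $\de=\eps/(2J)$, which makes the split slightly more explicit but is the same idea.
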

\begin{proof}
Let $P_n=[2^n,2^{n+1})$, $n\in\om$. Since $\mc{P}=\{P_n\colon n\in\om\}\subseteq\mc{S}_1$,  $\varphi_1\geq\varphi_{\mc{P},\lam}$ and hence $\mathcal{I}_1 \subseteq \mathcal{Z}=\mrm{Exh}(\varphi_{\mc{P},\lam})$ follows. Conversely, let $A\in \mathcal{Z}$ and denote
	$a_n = |A \cap P_n| /2^n$. Fix an $\eps>0$, assume that $a_n<\eps$ for every $n\geq N$ for some $N$, and also fix an $F\in\mc{S}$. We show that $s\colon =\sum\{\lambda_k\colon k\in F\cap A\setminus 2^N\}< (1-\log_2\eps)\eps$ and hence $\varphi_1(A \setminus 2^N) \leq  (1-\log_2\eps)\eps \xrightarrow{\eps\to 0}0$.

When estimating $s$ from above we can assume that $F\subseteq A\setminus 2^N$ (otherwise we can switch to a ``better'' $F$), in particular (a) $\min (F)\in P_{n_0}$ for some $n_0\geq N$, (b) $|F\cap P_n|<\eps 2^n$ for every $n$, and (c) $s=\sum_{k\in
F}\lambda_k$. If we consider only (a) and (b) above and that $F\in\mc{S}_1$ i.e. $|F|\leq\min(F)$, then $s$ is maximal if $F$ contains the maximal amount of points from consecutive $P_n$s starting with $P_{n_0}$. For such an $F$ we have $s<(n_1-n_0)\eps$ where $n_1$ is the first $n$ such that $F\cap P_n=\0$, this holds e.g. if $n_1$ satisfies $\eps(2^{n_0}+2^{n_0+1}+\cdots+2^{n_1})\geq 2^{n_0+1}>|F|$. A straightforward calculation shows that $n_1= n_0+1-\lfloor\log_2\eps\rfloor$ is large enough, and hence $s< (1-\log_2\eps)\eps$.
\end{proof}

Notice that if $\alpha<\beta$, then \emph{essentially} $\mathcal{S}_\al \subseteq \mathcal{S}_\be$, i.e. there is $n\in \omega$ such that if $S\in \mathcal{S}_\alpha$ and $n<\min S$, then $S\in \mathcal{S}_\beta$ (in notation $\mc{S}_\al\clrest (\om\setminus n)\subseteq\mc{S}_\be$). Using this remark, it
is trivial to see that  $\mathcal{I}_\beta \subseteq\mc{I}_\al$ for $\al<\be$ and so $(\mc{I}_\alpha)_{\alpha<\omega_1}$ forms a non-increasing sequence of ideals. Actually, we know more:

\begin{prop} If $0<\be$ then $\mathcal{I}_\be$ is not $F_\sigma$, and if $\alpha<\beta$, then  $\mathcal{I}_\beta \subsetneq \mathrm{Fin}(\varphi_\beta) \subsetneq \mathcal{I}_\alpha$ (with the only exception $\mrm{Fin}(\varphi_1)=\mc{P}(\om)=\mc{I}_0$). In particular, if $\be$ is a limit ordinal, then $\mc{I}_\be\subsetneq\bigcap_{\al<\be}\mc{I}_\al$.
\end{prop}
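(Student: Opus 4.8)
The plan is to prove three things in turn, after which the ``in particular'' clause is immediate: \textbf{(a)} $\mc{I}_\be$ is not $F_\sigma$ for $\be>0$ (and hence $\mc{I}_\be\subsetneq\mrm{Fin}(\varphi_\be)$); \textbf{(b)} $\mrm{Fin}(\varphi_\be)\subseteq\mc{I}_\al$ whenever $\al<\be$; \textbf{(c)} these inclusions are strict except for the listed case. For (a) I would invoke Theorem~\ref{compactly-supported}: since $\mc{S}_\be$ is compact, hence precompact, it suffices to check that the $\mc{S}_\be$-ideal $\mc{I}_\be=\mc{I}_{\mc{S}_\be,\lam}$ is non-trivial, i.e. not a trivial modification of $\mrm{Fin}$. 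But $\mc{I}_\be$ is proper ($\mc{I}_\be\subseteq\mc{I}_1=\mc{Z}$ by monotonicity of $(\mc{I}_\al)$, so $\om\notin\mc{I}_\be$) and tall (as $\varphi_\be(\{n\})=\lam_n\to 0$ and $\mrm{Exh}(\varphi)$ is tall iff $\varphi(\{n\})\to 0$), and a proper tall ideal is never a trivial modification of $\mrm{Fin}$ (such an ideal $\{A\colon A\cap X$ finite$\}$ is $\mc{P}(\om)$ if $X$ is finite and not tall if $X$ is infinite). Since $\mrm{Exh}(\varphi_\be)\subseteq\mrm{Fin}(\varphi_\be)$ always and $\mrm{Fin}(\varphi_\be)$ is $F_\sigma$, (a) gives $\mc{I}_\be\subsetneq\mrm{Fin}(\varphi_\be)$ for every $\be\geq 1$.

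For (b) the key move is to factor the inclusion as $\mrm{Fin}(\varphi_\be)\subseteq\mrm{Fin}(\varphi_{\al+1})\subseteq\mrm{Exh}(\varphi_\al)=\mc{I}_\al$. The first inclusion uses the remark preceding the proposition that essentially $\mc{S}_{\al+1}\subseteq\mc{S}_\be$: truncating any $S\in\mc{S}_{\al+1}$ to a fixed tail $\om\setminus(N+1)$ lands it in $\mc{S}_\be$ (by heredity of $\mc{S}_{\al+1}$), whence $\varphi_{\al+1}(A)\leq\sum_{i\leq N}\lam_i+\varphi_\be(A)$, finite for $A\in\mrm{Fin}(\varphi_\be)$ (and when $\be=\al+1$ there is nothing to do). The genuinely substantive step is $\mrm{Fin}(\varphi_{\al+1})\subseteq\mrm{Exh}(\varphi_\al)$, which exploits the one-level concatenation rule defining $\mc{S}_{\al+1}$: if $A\notin\mrm{Exh}(\varphi_\al)$, say $\mrm{tail}_{\varphi_\al}(A)=\eps>0$, then $\varphi_\al(A\setminus m)\geq\eps$ for every $m$, so for any $k$ I can greedily build finite sets $F^{(1)}<F^{(2)}<\cdots<F^{(k)}$ contained in $A$ with each $F^{(j)}\in\mc{S}_\al$ (cut a near-optimal witness for $\varphi_\al(A\setminus m_j)$ down to $A\setminus m_j$, using heredity), each of $\lam$-mass $>\eps/2$, and $k\leq\min F^{(1)}$ (let the first block start beyond $k$, then $m_{j+1}=\max F^{(j)}+1$). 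By the definition of $\mc{S}_{\al+1}$ the union $\bigcup_{j\leq k}F^{(j)}$ lies in $\mc{S}_{\al+1}$ and is contained in $A$, so $\varphi_{\al+1}(A)\geq\sum_{j\leq k}\sum_{i\in F^{(j)}}\lam_i>k\eps/2$; letting $k\to\infty$ contradicts $A\in\mrm{Fin}(\varphi_{\al+1})$.

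For (c): when $\al\geq 1$ the strictness of $\mrm{Fin}(\varphi_\be)\subsetneq\mc{I}_\al$ is free, since $\mc{I}_\al$ is not $F_\sigma$ by (a) while $\mrm{Fin}(\varphi_\be)$ is. When $\al=0$, so $\mc{I}_0=\mc{P}(\om)$, I need $\varphi_\be(\om)=\infty$ for $\be\geq 2$: this follows by producing, for each $K$, an element of $\mc{S}_2$ of $\lam$-mass $\geq K$ (concatenate $K$ consecutive dyadic blocks $[2^m,2^{m+1})$, each of $\lam$-mass $1$, starting from an $m$ with $2^m\geq K$) and pushing it into $\mc{S}_\be$ via the ``essentially contains'' remark; whereas for $\be=1$ the crude estimate $\sum_{i\in F}\lam_i\leq|F|\cdot\lam_{\min F}\leq 2$, valid for all $F\in\mc{S}_1$, gives $\varphi_1(\om)<\infty$, i.e. $\mrm{Fin}(\varphi_1)=\mc{P}(\om)=\mc{I}_0$, the stated exception. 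Finally, if $\be$ is a limit ordinal, combining (b) over all $\al<\be$ yields $\mc{I}_\be\subseteq\mrm{Fin}(\varphi_\be)\subseteq\bigcap_{\al<\be}\mc{I}_\al$, and the first inclusion is strict by (a).

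The main obstacle is the inclusion $\mrm{Fin}(\varphi_{\al+1})\subseteq\mrm{Exh}(\varphi_\al)$: one must amplify a merely non-vanishing $\varphi_\al$-tail of $A$ into an \emph{unbounded} $\varphi_{\al+1}$-norm. The delicacy lies in organizing the greedy block selection so that it simultaneously respects the admissibility condition ``$\{k\}\leq E_1<\cdots<E_k$'' of the $\mc{S}_{\al+1}$-rule and keeps each block's $\lam$-mass bounded away from $0$ while sitting arbitrarily far out inside $A$ — and it is precisely here that $\lam_n\to 0$ together with the definition of $\mrm{tail}_{\varphi_\al}$ enter.
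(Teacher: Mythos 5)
Your proof is correct and follows essentially the same route as the paper: non-$F_\sigma$-ness via Theorem \ref{compactly-supported}, and the inclusion $\mrm{Fin}(\varphi_\be)\subseteq\mc{I}_\al$ by concatenating many far-out $\mc{S}_\al$-blocks of $A$ of $\lam$-mass bounded below into an admissible member of $\mc{S}_{\al+1}$ and invoking the ``essentially included'' remark, with strictness from comparing complexities. The only differences are organizational — you factor through $\mrm{Fin}(\varphi_{\al+1})$ instead of estimating $\varphi_\be(A)$ directly, and you spell out the $\al=0$ cases ($\varphi_1(\om)\leq 2$ and $\varphi_\be(\om)=\infty$ for $\be\geq 2$) that the paper leaves implicit — which is fine.
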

\begin{proof}
As $\mc{S}_\be$ is compact and $\mc{I}_\be$ is a non-trivial $\mc{S}_\be$-ideal, it is not $F_\sigma$ by Theorem \ref{compactly-supported}.

\smallskip
Now let $\al<\be$. Then $\mc{I}_\be=\mrm{Exh}(\varphi_\beta)\subseteq\mrm{Fin}(\varphi_\be)$ and as $\mc{I}_\be$ is not $F_\sigma$, this inclusion is strict. Similarly, it is enough to show that $\mrm{Fin}(\varphi_\be)\subseteq\mc{I}_\al$ because $\mc{I}_\al$ is not $F_\sigma$ for $\al>0$. If $A\notin \mathcal{I}_\alpha$ then there are an $\eps>0$ and a sequence $F_0<F_1<\cdots$ in $\mc{S}_\al$ such that $F_n\subseteq A$ and $\varphi_\al(F_n)>\varepsilon$ for every $n$. Now as $n\leq \min (F_n)$, $E_n=F_n\cup F_{n+1}\cup\cdots F_{2n-1}\in\mc{S}_{\al+1}$, and since $\mathcal{S}_{\alpha+1}$ is essentially included in $\mathcal{S}_\beta$, we can assume that $E_n\in\mc{S}_\be$ for every $n$, therefore $\varphi_\be(A)\geq\varphi_\be(E_n)> n\eps$ for every $n$, and so $A\notin \mrm{Fin}(\varphi_\beta)$.
\end{proof}

The ideals $\mathcal{I}_\al$ for $\al>1$ resemble the density zero ideal $\mc{Z}$ and may be considered as kind of density ideals of ``higher order''. However, the following proposition indicates that they are, in a sense, far from being (generalized)
density ideals.
An lsc submeasure $\varphi$ is {\em summable-like} if there is an $\eps>0$ such that for every $\de>0$ there is a sequence $A_n\in [\om]^{<\om}$ of pairwise disjoint finite sets with $\varphi(A_n)<\de$ and there is a $k\in \om$ such that $\varphi\big(\bigcup_{n\in Y}A_n\big)\geq\eps$ for every $Y\in [\om]^k$. An analytic P-ideal $\mc{J}$ is {\em summable-like} if there is a summable-like submeasure $\varphi$ such that $\mc{J}=\mathrm{Exh}(\varphi)$.  For example, Farah ideals which are not trivial modifications of $\mrm{Fin}$ are summable-like, and we already mentioned that so is $\mrm{tr}(\mc{N})$. As far as we know, $\mrm{tr}(\mc{N})$ is the only known ``natural'' non $F_\sigma$ example of such an ideal.

\begin{prop}
$\mathcal{I}_\al$ is summable-like for $\al>1$.
\end{prop}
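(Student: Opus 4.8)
The plan is to verify directly that the canonical lsc submeasure $\varphi_\al$ with $\mc{I}_\al=\mrm{Exh}(\varphi_\al)$, given by $\varphi_\al(A)=\sup\{\sum_{i\in A\cap F}\lam_i\colon F\in\mc{S}_\al\}$, is summable-like, and in fact with the fixed value $\eps=1$. The elementary facts I will lean on are: (i) if $A\subseteq P_m$, where $P_m=[2^m,2^{m+1})$, then $\varphi_\al(A)\le\sum_{i\in A}\lam_i=|A|2^{-m}$, which is immediate since $\lam_i=2^{-m}$ on $P_m$ and $A\cap F\subseteq A$; and (ii) the structural fact recorded just before the proposition that for $\al\ge 2$ there is $n_\al\in\om$ with $\mc{S}_2\clrest(\om\setminus n_\al)\subseteq\mc{S}_\al$ (take $n_\al=0$ when $\al=2$, and otherwise this is ``essentially $\mc{S}_2\subseteq\mc{S}_\al$''). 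The building blocks of the witnessing sequence will be subsets of the dyadic intervals $P_m$.

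The conceptual core is a single combinatorial observation explaining why the statement holds at level $\ge 2$ but fails at level $1$: if $q_1<q_2<\cdots<q_k$ and $F_j\subseteq P_{q_j}$ is nonempty for each $j\le k$, then $F_1<F_2<\cdots<F_k$, each $F_j\in\mc{S}_1$ (because $|F_j|\le 2^{q_j}=|P_{q_j}|\le\min F_j$), and $\min F_1\ge 2^{q_1}$; hence as soon as $k\le 2^{q_1}$ we have $\{k\}\le F_1<\cdots<F_k$, which is exactly the clause defining membership of $\bigcup_{j\le k}F_j$ in $\mc{S}_2$. If moreover $2^{q_1}>n_\al$, then $\bigcup_{j\le k}F_j\in\mc{S}_2\clrest(\om\setminus n_\al)\subseteq\mc{S}_\al$, so $\varphi_\al\big(\bigcup_{j\le k}F_j\big)\ge\sum_{j\le k}|F_j|2^{-q_j}$. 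The point to stress is that merging $k$ block-pieces into one Schreier set costs only the bound $k\le 2^{q_1}$ on the \emph{number} of pieces, not any bound on their total $\lam$-mass --- which is precisely what $\mc{S}_1$ (where $F\in\mc{S}_1$ forces $|F|\le\min F$) does not allow, matching the fact that $\mc{I}_1=\mc{Z}$ is not summable-like.

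With this in hand the assembly is bookkeeping. Given $\de>0$, set $k=\lceil 8/\de\rceil$, then pick $m$ with $2^m\ge\max\{k,\,n_\al+1,\,4/\de\}$, and for $n\in\om$ let $A_n\subseteq P_{m+n}$ be any set with $|A_n|=\lfloor\de\,2^{m+n}/2\rfloor$ (nonempty since $2^m\ge 4/\de$). These sets are pairwise disjoint (distinct blocks) and $\varphi_\al(A_n)\le|A_n|2^{-(m+n)}\le\de/2<\de$ by (i). For any $Y=\{n_1<\cdots<n_k\}\in[\om]^k$, apply the observation with $q_j=m+n_j$ and $F_j=A_{n_j}$: we have $q_1<\cdots<q_k$, $k\le 2^m\le 2^{q_1}$, and $2^{q_1}\ge 2^m>n_\al$, so $\bigcup_{n\in Y}A_n\in\mc{S}_\al$ and
\[
\varphi_\al\Big(\bigcup_{n\in Y}A_n\Big)\ \ge\ \sum_{j=1}^k|A_{n_j}|2^{-(m+n_j)}\ \ge\ \sum_{j=1}^k\big(\tfrac{\de}{2}-2^{-(m+n_j)}\big)\ \ge\ k\big(\tfrac{\de}{2}-2^{-m}\big)\ \ge\ \tfrac{k\de}{4}\ \ge\ 2\ \ge\ \eps,
\]
using $2^{-m}\le\de/4$ and $k\ge 8/\de$. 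Hence $\varphi_\al$ is summable-like, and therefore so is $\mc{I}_\al$. I do not expect a genuine obstacle here; the step needing the most care is isolating the level-$2$ combinatorial observation and invoking ``essentially $\mc{S}_2\subseteq\mc{S}_\al$'' with a correctly chosen threshold $m$, after which the remaining estimates are routine.
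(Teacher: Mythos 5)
Your proof is correct and follows essentially the same route as the paper: both arguments take blocks of prescribed small $\lambda$-mass inside the dyadic intervals $[2^m,2^{m+1})$, note that each such block lies in $\mc{S}_1$, and use the fact that a union of at most $\min$-many consecutive $\mc{S}_1$-sets lies in $\mc{S}_2$, which is essentially contained in $\mc{S}_\al$ for $\al\geq 2$, to make the union of any $k$ blocks have $\varphi_\al$-mass at least $\eps=1$. The paper's choice of blocks (the first $2^n$ points of $[2^{N+n},2^{N+n+1})$, so each has mass exactly $2^{-N}$ and $k=2^N$) just streamlines your bookkeeping with the floors and the constants $\de/2$, $\de/4$.
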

\begin{proof} Fix $N\in \omega\setminus\{0\}$. We will define a sequence $F_0<F_1<\cdots$ such that $\varphi_\al(F_n)=2^{-N}$ for every $n$ and $\varphi_\al(\bigcup_{n\in H}F_n)= 1$ for every $H\in [\om]^{2^N}$. Let $F_n \subseteq [2^{N+n}, 2^{N+n+1})$ be first $2^n$ many points in this interval. Then  $F_n\in\mc{S}_1\subseteq\mc{S}_\al$ (it is easy to see that $\mc{S}_n\clrest(\om\setminus n)\subseteq\mc{S}_\al$ for every $n\in\om$ and $\al\in [n,\om_1)$) and hence $\varphi_\al(F_n) = 2^{-N}$. Now, if $H\in [\omega]^{2^N}$ then $\bigcup_{n\in H} F_n\in \mc{S}_2\clrest (\om\setminus 2)\subseteq\mc{S}_\al$ (here we need that $\min(F_0)\geq 2$), and therefore $\varphi_\al(\bigcup_{n\in H} F_n) = 1$.
\end{proof}

\begin{rem} So far we know that if $\al>0$, then $\mc{I}_\al$ is not $F_\sigma$ and if $\al>1$ it is summable-like, and of course, all of them contain the summable ideal. Since $\mrm{tr}(\mathcal{N})\subseteq \{A\subseteq 2^{<\om}:|A\cap \,\!^n
	2|/2^n\to 0\} = \mathcal{Z}$, one may wonder if every $\mathcal{I}_\alpha$ contains $\mrm{tr}(\mathcal{N})$. This is not the case, $\mathrm{tr}(\mathcal{N}) \nsubseteq \mathcal{I}_2$: Here of course we identify $\om\setminus\{0\}$ with $2^{<\om}$ in the standard way. Let $A\subseteq\om\setminus\{0\}$ such
that \[ A \cap [2^n, 2^{n+1})=\big\{\text{the first}\;2^{n-k}\;\text{many points in this interval }\big\}\in\mc{S}_1\]  where $n\in [2^k, 2^{k+1})$. Now, $A\in\mrm{tr}(\mc{N})$ because $A\cap [2^n,2^{n+1})$ is a maximal antichain in $A\setminus 2^n$, and hence $\varphi_\mrm{trn}(A\setminus 2^n)=2^{-k}$ if $n\in [2^k,2^{k+1})$. On the other hand, if we fix $k$ then \[ F=\bigcup\big\{A\cap [2^n,2^{n+1})\colon n\in [2^k,2^{k+1})\big\}\in \mc{S}_2\] because it is the union of $2^k$ many consecutive elements of $\mc{S}_1$ and $\min(F)=2^{2^k}$, therefore $\varphi_2(A\setminus 2^{n_0})\geq\sum\{2^{n-k}2^{-n}\colon n\in [2^k,2^{k+1})\}=1$.
\end{rem}

We know that $\mc{I}_\al$ is representable (via an unconditional basic sequence) in $X_\al$ and so in $C(\mc{S}_\al)$ where $\mc{S}_\al$ is compact and of Cantor-Bendixson rank $\om^\al+1$.

\begin{prob} Is it true that $\mc{I}_\al$ cannot be represented (at least via unconditional basic sequences) in any $C(K)$ where $K$ is countable, compact, and of Cantor-Bendixson rank $\leq \om^\al$?
\end{prob}

\section{Schur Property}\label{Schur}

Combining Theorem \ref{characterization} and Theorem \ref{compactly-supported} we can characterize both those spaces $X_\mathcal{F}$ which does not contain a copy of $c_0$ and those which does not contain a copy of $\ell_1$. Both of the
characterizations are topological but of different nature: If $\mathcal{F}$ is a family of finite sets covering $\omega$, then (1) $X_\mathcal{F}$ does not contain a copy of $c_0$ iff $X_\mathcal{F}$ is $F_\sigma$ in $\mbb{R}^\om$; and (2) $X_\mathcal{F}$ does not contain a copy of $\ell_1$ iff $\mathcal{F}$ is precompact.

Since $X_\mathcal{F}$ do contain a copy of $c_0$ if $\mathcal{F}$ is compact, it follows that every $X_\mathcal{F}$ contains either $c_0$ or $\ell_1$. Therefore, the spaces $X_\mathcal{F}$ are in a sense alloys of $c_0$ and $\ell_1$.

We already presented several examples of $X_\mathcal{F}$ and $\mathcal{F}$-ideals for $\mathcal{F}$ being compact. In this section we want to study the families which are far from being compact and which induce Banach spaces from the other extreme.

First, we will show that a family motivated by  Farah's example induces an $\ell_1$-saturated Banach space which is not isomorphic to $\ell_1$.

\begin{exa}\label{Farah-family}  Let $\mathcal{F}$ be the family of those finite sets $F\subseteq\om\setminus\{0\}$ for which $|F\cap [2^n, 2^{n+1})|\leq n^{-1}2^{n}$ for every $n>0$. Then $\mathcal{I}_{\mathcal{F},\lam}$ is a variant of Farah example (see Example \ref{exa-farah}).

Concerning the generated Banach spaces it is not hard to see that $X_\mathcal{F}=\mrm{FIN}(\Phi_\mathcal{F})$ and so $X_\mathcal{F}$ does not contain a copy of $c_0$. In fact, we can prove that $X_\mathcal{F}$ has a slightly stronger property. Recall that a Banach space $X$ satisfies the \emph{Schur property} if every weakly null sequence in $X$ converges to $0$ in norm. The canonical space with Schur property is $\ell_1$ and Rosenthal $\ell_1$ theorem (see Theorem \ref{Ros}) implies that every space with Schur property is $\ell_1$-saturated.

We will show that $X_\mathcal{F}$ enjoys the Schur property. Indeed, assume that $(x_n)$ is such that $\Phi(x_n)>\varepsilon$ for some $\varepsilon>0$ and each $n$. We are going to show that $(x_n)$ is not weakly null. Since finitely dimensional
spaces enjoy Schur property, without loss of generality we may assume that $(x_n)$ is a block sequence, i.e. there is a sequence of finite sets $(G_n)$ such that $\mathrm{supp}(x_n)\subseteq G_n$ and $G_n < G_{n+1}$ for every $n$. Moreover, passing to a subsequence if needed, we may assume that $|\{k\colon [2^n,2^{n+1})\cap G_k \ne \emptyset\}| \leq 1$ for every $n$. Let $A_n = \mathrm{supp}(x_k) \cap
		[2^n, 2^{n+1})$ if $[2^n,2^{n+1})\cap  G_k\ne \emptyset$. If there is no such $k$, let $A_n = \emptyset$. We may assume that $|A_n|<n^{-1}2^n$ for every $n$ (by shrinking supports of $x_k$'s if needed, without decreasing their norms).

Now, notice that $\bigcup_{n\in\om} A_n$ is infinite and $\bigcup_{n\in\om} A_n \in \overline{\mathcal{F}}$. This means that the function $f\colon X_\mathcal{F} \to \mathbb{R}$ defined by $f(x) = \sum_{n\in \omega} \chi_A(n) x(n)$ is a functional on $X_\mathcal{F}$ of norm $1$ such that $f(x_n)>\varepsilon$ for every $n$, and so $(x_n)$ is not weakly null.
		
On the other hand $X_\mathcal{F}$ is not isomorphic to $\ell_1$. Otherwise, $\{e_n\colon n\in \omega\}$ would be equivalent to the standard base of $\ell_1$ (as all bounded unconditional bases of $\ell_1$ are equivalent, see \cite{Lindenstrauss}). But there is no $c>0$ such that $\Phi_\mc{F}(\sum_{n<N} e_n)>c\cdot N  = \|\sum_{n<N} e_n\|_1$ for every $N\in \omega$.
\end{exa}

There are many examples having the Schur property which are not isomorphic to $\ell_1$. It would be more interesting to find a space which is $\ell_1$-saturated and which does not have the Schur property. There are many examples of them, too (the
first one was given by Bourgan, \cite{Bourgain}, see also the construction of Azimi and Hagler, \cite{Hagler}), but most of these examples are quite technical. However, it is not clear if there is a space $X_\mathcal{F}$ which does not have a
copy of $c_0$ and does not posses the Schur property.

\begin{prob}\label{schur-c0} Is there a family $\mathcal{F}\subseteq [\omega]^{<\omega}$ such that $X_\mathcal{F}$ does not have the Schur property and which does not have a copy of $c_0$?
\end{prob}

The following remarks (motivated by the example above) may indicate how to look for such a family (or how to prove that they do not exist).

\begin{prop}\label{clos} Let $\mathcal{F}$ be a hereditary family of finite sets covering $\om$. Assume that $(x_n)$ is a sequence in $X_\mathcal{F}$ with pairwise disjoint supports and that there is $A \in \overline{\mathcal{F}}$ such that $\limsup_{n\to\infty} \Phi_\mathcal{F}(P_A(x_n)) > 0$. Then $(x_n)$ is not weakly null.
\end{prop}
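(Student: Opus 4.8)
The plan is to extract a single functional on $X_\mathcal{F}$ that the relevant part of infinitely many $x_n$'s sees, and then argue that $(x_n)$ cannot be weakly null. First I would pass to a subsequence so that $\Phi_\mathcal{F}(P_A(x_n))>\eps$ for some fixed $\eps>0$ and every $n$ (in the subsequence). By definition of $\Phi_\mathcal{F}$, for each $n$ we can choose $F_n\in\mathcal{F}$ with $F_n\subseteq A$ and $\sum_{i\in F_n}|x_n(i)|>\eps$; since $\mathcal{F}$ is hereditary we may also assume $F_n\subseteq\mrm{supp}(x_n)$, so the $F_n$ are pairwise disjoint finite subsets of $A$. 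Now I would try to build a single infinite set $B\subseteq A$ by taking $B=\bigcup_{n\in E}F_n$ for a suitable infinite $E\subseteq\om$. The key point is that $B$ should again lie in $\overline{\mathcal{F}}$: since $A\in\overline{\mathcal{F}}$, every finite subset $G$ of $A$ lies in $\overline{\mathcal{F}}$ (and in fact, using heredity of $\mathcal{F}$, is itself in $\mathcal{F}$); so any $B\subseteq A$ satisfies $B\cap n\in\mathcal{F}$ for all $n$, i.e. $B\in\overline{\mathcal{F}}$ automatically. Hence we may simply take $B=\bigcup_{n}F_n\subseteq A$; it is infinite because the $F_n$ are nonempty and pairwise disjoint.

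Next I would define $f\colon X_\mathcal{F}\to\mbb{R}$ by $f(x)=\sum_{i\in B}\eta_i x(i)$, where $\eta_i=\mrm{sgn}(x_{n(i)}(i))$ and $n(i)$ is the unique index with $i\in F_{n(i)}$ (set $\eta_i=1$ on the rest of $B$, though this is irrelevant). The claim is that $f\in X_\mathcal{F}^*$ with $\|f\|\leq 1$: for any $x\in X_\mathcal{F}$ and any finite $G\subseteq B$ we have $\sum_{i\in G}|x(i)|\leq\Phi_\mathcal{F}(x)$ because $G\in\mathcal{F}$ (as $G\subseteq A$ and $A\in\overline{\mathcal{F}}$ with $\mathcal{F}$ hereditary gives $G\in\mathcal{F}$), so $|f(x)|\leq\sum_{i\in B}|x(i)|=\sup\{\sum_{i\in G}|x(i)|\colon G\in[B]^{<\om}\}\leq\Phi_\mathcal{F}(x)$. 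On the other hand $f(x_n)\geq\sum_{i\in F_n}\eta_i x_n(i)=\sum_{i\in F_n}|x_n(i)|>\eps$ for every $n$ in our subsequence (the contributions from $B\setminus F_n$ vanish since $\mrm{supp}(x_n)$ is disjoint from $F_m$ for $m\ne n$ and from $B\setminus\bigcup_m F_m$). Thus $x^*(x_n)\not\to 0$ for $x^*=f$, so $(x_n)$ is not weakly null.

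The one genuinely delicate point is the interplay between $A\in\overline{\mathcal{F}}$ and the hereditary hypothesis: I am using that if $\mathcal{F}$ is hereditary then $A\in\overline{\mathcal{F}}$ implies every finite subset of $A$ belongs to $\mathcal{F}$ itself. This follows because a finite $G\subseteq A$ is a clopen-basic neighborhood condition, so some $F\in\mathcal{F}$ agrees with $A$ on $G\cup(\max G+1)$, whence $G\subseteq F\in\mathcal{F}$ and heredity gives $G\in\mathcal{F}$; then $\sum_{i\in G}|x(i)|$ is one of the terms in the supremum defining $\Phi_\mathcal{F}(x)$. If the statement is meant without heredity one instead uses directly that $G\subseteq A\in\overline{\mathcal{F}}$ gives $G\in\overline{\mathcal{F}}$, and that $\Phi_\mathcal{F}=\Phi_{\overline{\mathcal{F}}}$ on $\mbb{R}^\om$ since a family and its closure generate the same extended norm (the sup of a continuous-in-each-coordinate functional over $\mathcal{F}$ equals its sup over $\overline{\mathcal{F}}$ when we restrict to a single $x$; more carefully, for fixed $x$ the map $F\mapsto\sum_{i\in F}|x(i)|$ is lower semicontinuous on $[\om]^{<\om}\cup\{$closure points$\}$ in the appropriate sense). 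Since the paper already reduces everything to hereditary families via $X_\mathcal{F}=X_{\mathcal{F}^{\downarrow}}$, I would just invoke heredity and keep the argument short.
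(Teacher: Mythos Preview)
Your proof is correct and follows essentially the same approach as the paper's: both construct a functional of the form $x\mapsto\sum_{i\in B}\eta_i\,x(i)$ with $B\subseteq A\in\overline{\mathcal{F}}$ (hence bounded of norm $\leq 1$) that stays bounded away from zero along a subsequence. The only cosmetic difference is that the paper takes all $\eta_i=1$ after first shrinking $A$ so that each $x_n$ has constant sign on $A\cap\mrm{supp}(x_n)$ (losing a factor $1/2$), whereas you keep $B=\bigcup_n F_n$ and absorb the signs into the coefficients $\eta_i=\mrm{sgn}(x_{n(i)}(i))$; your variant is arguably a touch cleaner and recovers the full $\eps$.
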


\begin{proof}
Let $(x_n)$ and $A\subseteq \omega$ be as above. Passing to a subsequence, we can assume that there is an $\eps>0$ such that $\Phi_\mathcal{F}(P_A(x_n))>\varepsilon$ for every $n$.
	
Let $F_n = \mrm{supp}(x_n)$, $F^+_n = \{k\in F_n\colon x_n(k)>0\}$, and $F^-_n = F_n \setminus F^+_n$. Notice that either $\Phi_\mathcal{F}(P_{F^+_n}(x_n))>\varepsilon/2$ or $\Phi_\mathcal{F}(P_{F^-_n}(x_n))>\varepsilon/2$. Hence, shrinking $A$ again if needed, we may assume that $|\sum_{k\in F_n\cap A} x_n(k)|>\varepsilon/2$ for every $n$.

Now, let $x^*\colon X_\mathcal{F} \to \mathbb{R}$, $x^*(x) = \sum_{n\in A} x(n)$. Since $A\in \overline{\mathcal{F}}$, $x^*\in X^*_\mathcal{F}$ (in fact, $x^*$ is of norm $1$) but $|x^*(x_n)| = |\sum_{k\in F_n\cap A} x_n(k)| > \varepsilon/2$ for every $n$, and hence $(x_n)$ is not weakly null.
\end{proof}

\begin{cor}\label{szczur} Let $\mathcal{F}$ be as above. If for  every sequence $(x_n)$ with pairwise disjoint supports which is not (strongly) null, there is $A\in \overline{\mathcal{F}}$ such that $\limsup_{n\to\infty} \Phi_\mathcal{F}(P_A(x_n))>0$, then $X_\mathcal{F}$ has the Schur property. 
\end{cor}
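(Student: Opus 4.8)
The plan is to argue by contradiction, reduce an offending weakly null sequence to a semi-normalized block sequence (i.e.\ one that is disjointly and finitely supported) by a gliding--hump argument, and then apply Proposition~\ref{clos}. So suppose $(x_n)$ is weakly null in $X_\mc{F}$ but $\Phi_\mc{F}(x_n)\not\to 0$; after passing to a subsequence we may assume $\Phi_\mc{F}(x_n)>\eps$ for every $n$ and some $\eps>0$. A weakly null sequence is weakly bounded, hence norm bounded; moreover, since each coordinate functional $e^*_k\in X^*_\mc{F}$ is bounded, weak nullity forces $x_n(k)\to 0$ as $n\to\infty$ for every fixed $k$.

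Next I would run the Bessaga--Pe\l czy\'nski selection principle, which is particularly clean here because $(e_n)$ is a $1$-unconditional basis of $X_\mc{F}=\mrm{EXH}(\Phi_\mc{F})$: for each fixed $x\in X_\mc{F}$ the tail $\Phi_\mc{F}(P_{\om\setminus b}(x))\to 0$ as $b\to\infty$ (this is the definition of $\mrm{EXH}$), while for each fixed $b$ the head $\Phi_\mc{F}(P_{[0,b)}(x_n))\to 0$ as $n\to\infty$ (on the finite-dimensional coordinate space spanned by $e_0,\dots,e_{b-1}$, on which $\Phi_\mc{F}$ is a genuine norm, coordinatewise convergence to $0$ is norm convergence to $0$). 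A routine diagonalisation then yields a subsequence $(x_{n_j})$, integers $0=p_0<p_1<p_2<\cdots$, and block vectors $y_j=P_{[p_j,p_{j+1})}(x_{n_j})$ with $\Phi_\mc{F}(x_{n_j}-y_j)\le \Phi_\mc{F}(P_{[0,p_j)}(x_{n_j}))+\Phi_\mc{F}(P_{\om\setminus p_{j+1}}(x_{n_j}))<\eps/2$. Then $(y_j)$ has pairwise disjoint \emph{finite} supports, it is not norm null since $\Phi_\mc{F}(y_j)\ge \Phi_\mc{F}(x_{n_j})-\Phi_\mc{F}(x_{n_j}-y_j)>\eps/2$, and it is still weakly null because $x_{n_j}-y_j\to 0$ in norm (hence weakly) while $x_{n_j}\to 0$ weakly.

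Finally, applying the hypothesis of the corollary to the disjointly supported, non--norm-null sequence $(y_j)$, there is some $A\in\overline{\mc{F}}$ with $\limsup_{j\to\infty}\Phi_\mc{F}(P_A(y_j))>0$. By Proposition~\ref{clos} the sequence $(y_j)$ is then not weakly null, contradicting the conclusion of the previous paragraph. Hence $\Phi_\mc{F}(x_n)\to 0$, i.e.\ $X_\mc{F}$ has the Schur property.

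The only step that is not pure bookkeeping is the block-sequence extraction, and even that is the classical gliding--hump construction, here simplified by $1$-unconditionality; the two points to watch are that the perturbation $x_{n_j}-y_j$ must be controlled in the $\Phi_\mc{F}$-norm (not merely coordinatewise, though the latter is what drives the diagonalisation) and that the $y_j$ genuinely sit on the finite intervals $[p_j,p_{j+1})$, which is automatic by construction so that Proposition~\ref{clos} applies to them.
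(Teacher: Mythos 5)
Your proof is correct and takes essentially the approach the paper intends: the corollary is meant as an immediate consequence of Proposition~\ref{clos} combined with the reduction to disjointly supported block sequences already carried out in Example~\ref{Farah-family}. One tiny adjustment: in the gliding--hump step require the error at stage $j$ to be $<\min(\eps/2,2^{-j})$ rather than just $<\eps/2$, since your claim that $x_{n_j}-y_j\to 0$ in norm (needed for $(y_j)$ to remain weakly null) does not follow from the stated bound alone.
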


\begin{rem}
If a sequence $(x_n)$ in $X_\mathcal{F}$ has $A\in \overline{\mathcal{F}}$ as above, then it has a subsequence which is equivalent to the standard basis of $\ell_1$.
\end{rem}

\begin{prob} Let $\mathcal{F}$ be as above. Assume that there is a normalized sequence $(x_n)$ in $X_\mathcal{F}$  such that for each $\lim_{n\to\infty} \Phi_\mathcal{F}(P_A(x_n)) = 0$ for each $A\in \overline{\mathcal{F}}$. Does it mean that $X_\mathcal{F}$ contains a copy of $c_0$? Or, at least, that it is not $\ell_1$-saturated?
\end{prob}

Notice that if the answer to the above is positive, then, because of Corollary \ref{szczur}, Problem \ref{schur-c0} has the negative answer.

\begin{prob}
	Assume $\mathcal{F} \subseteq [\omega]^{<\omega}$ is a family such that $X_\mathcal{F}$ does not have a copy of $c_0$. Is $X_\mathcal{F}$ $\ell_1$-saturated?
\end{prob}

This problem is natural in the light Theorem \ref{compactly-supported}: We know that $X_\mathcal{F}$ does not have a copy of $\ell_1$ if and only if it is $c_0$-saturated. The above problem asks if we have a similar
equivalence for spaces which do not have a copy of $c_0$. It seems likely that the answer is positive.

\bibliographystyle{alpha}
\bibliography{bib-norm}
\end{document}